\documentclass[12pt]{amsart}
\RequirePackage[sc]{mathpazo}
\counterwithin{equation}{section}

\usepackage[centering,scale=0.75]{geometry}

\usepackage[all]{xy}
\usepackage{amssymb}
\usepackage{mathrsfs}
\usepackage{graphicx}
\usepackage{xcolor}
\usepackage{tikz}
\usepackage[colorlinks,plainpages,backref,
    linkcolor=blue!80!green,
    citecolor=green!60!blue,
    urlcolor=red!50!black]{hyperref}

\newtheorem{theorem}{Theorem}[section]
\newtheorem{definition}[theorem]{Definition}
\newtheorem{lemma}[theorem]{Lemma}

\newtheorem{prop}[theorem]{Proposition}

\newtheorem{example}[theorem]{Example}
\newtheorem{remark}[theorem]{Remark}

\def\bfN{\mathbf{N}}
\def\bfM{\mathbf{M}}
\def\calO{\mathcal{O}}
\def\calK{\mathcal{K}}
\def\Gr{\mathsf{Gr}}
\def\vv{\mathbf{v}}
\def\ww{\mathbf{w}}
\def\bbV{\mathbb{V}}
\def\bbW{\mathbb{W}}

\def\calR{{\mathcal{R}}}
\def\pt{\mathsf{pt}}
\def\Eu{\operatorname{Eu}}
\def\SO{\operatorname{SO}}
\def\Sp{\operatorname{Sp}}
\def\GL{\operatorname{GL}}

\def\Lie{\operatorname{Lie}}
\def\Diff{\operatorname{\mathsf{Diff}}}
\def\diag{\operatorname{diag}}
\def\Res{\operatorname{Res}}
\def\Bil{\operatorname{Bil}}
\def\Hom{\operatorname{Hom}}

\usepackage{etoolbox}

\AtBeginDocument{%
  \let\originalbibitem\bibitem
  \renewcommand{\bibitem}[2][]{%
    \ifstrequal{#2}{SSoX26}
      {\originalbibitem[SSoX26]{#2}}
      {%
        \ifstrequal{#2}{SSX26}
          {\originalbibitem[SSX26]{#2}}
          {%
            \ifstrempty{#1}
              {\originalbibitem{#2}}
              {\originalbibitem[#1]{#2}}%
          }%
      }%
  }%
}

\allowdisplaybreaks

\begin{document}

\title[Coulomb Branches of Separated Cotangent Type]{Quantized Coulomb Branches of Separated Cotangent Type and Orthosymplectic Quivers}

\author{Yaolong Shen}
\address[Yaolong Shen]{School of Mathematical Sciences, Key Laboratory of MEA (Ministry of Education)
\& Shanghai Key Laboratory of PMMP, East China Normal University, Shanghai 200241,
China}
\email{yaolongshen3555@gmail.com}

\author{Changjian Su}
\address[Changjian Su]{Yau Mathematical Sciences Center, Tsinghua University, Beijing, China}
\email{changjiansu@mail.tsinghua.edu.cn}

\author{Rui Xiong}
\address[Rui Xiong]{Department of Mathematics and Statistics, University of Ottawa, 150 Louis-Pasteur, Ottawa, ON, K1N 6N5, Canada}
\email{xiongrui@umich.edu}

\begin{abstract}
We propose a definition of the quantized Coulomb branches of separated cotangent type, and prove that the corresponding classical construction recovers the non-cotangent Coulomb branch. 
We also obtain a formula for quasi-minuscule monopole operators in arbitrary cotangent type. Applying these results, we compute the monopole operators for orthosymplectic quivers and construct a homomorphism from the shifted twisted Yangian of split ADE type to the corresponding quantized Coulomb branch algebra.
\end{abstract}

\maketitle

\setcounter{tocdepth}{1}
\tableofcontents

\section{Introduction}

For a complex reductive group $G$ and a finite-dimensional representation $\bfN$, Braverman, Finkelberg, and Nakajima constructed the Coulomb branch algebra $\mathcal{A}_{\hbar}(G,\bfN)$ by equivariant Borel--Moore homology of the BFN space, with multiplication given by convolution \cite{Nak16,BFN18}.  This construction gives a quantization of the Coulomb branch associated with the symplectic matter representation $\bfN\oplus\bfN^*$.  Its realization inside an algebra of difference operators is particularly useful: geometrically defined monopole classes become explicit difference operators, and in many examples these operators realize generators of shifted Yangians and related quantum groups; see, for example, \cite{BFN19,FT19,KWWY14,SSX26,SSoX26,wang2025quivers,We19}.

The cotangent assumption is nevertheless restrictive.  A general symplectic representation $\bfM$ of $G$ need not admit a $G$-invariant Lagrangian, and therefore need not be of the form $\bfN\oplus\bfN^*$.  Braverman, Dhillon, Finkelberg, Raskin, and Travkin extended the classical construction to such non-cotangent representations, subject to an anomaly cancellation condition, by means of a ring object in a twisted derived Satake category \cite{BDFRT} (see \cite{dedushenko2025coulomb,teleman2022coulomb} for other approaches).  Their construction produces a commutative Coulomb branch algebra, but a quantized algebra that is both explicit and suitable for computations is not presently available in comparable generality.

Orthosymplectic quivers provide a basic and important source of this difficulty. It first appeared in \cite{KP82} to realize the nilpotent orbits in the classical Lie algebras. The gauge groups for orthosymplectic quivers are products of special orthogonal and symplectic groups, while their edge and framing spaces form a symplectic matter representation that is naturally of non-cotangent type. Such quivers arise naturally in the study of supersymmetric boundary conditions and three-dimensional mirror symmetry; see for example \cite{GW09}. The corresponding non-quantized Coulomb branches are being studied by Finkelberg, Hanany, and Nakajima \cite{FHN}, while the Higgs branch are studied in \cite{LYQ,nakajima2025instantons}. On the other hand, one expects the corresponding quantized Coulomb branches to be closely related to shifted twisted Yangians, see \cite{LWW}.  The purpose of this paper is to develop a geometric and computational framework for this problem.  Our construction has three main ingredients, which we now describe.

\subsection*{Quantized Coulomb branches of separated cotangent type}

We first isolate a class of non-cotangent representations for which a quantization can be constructed from ordinary BFN algebras.  Let
\[
G=G_1\times G_2
\]
and let $\bfM$ be a symplectic representation of $G$.  We say that $(G,\bfM)$ is of \emph{separated cotangent type} if the restriction of $\bfM$ to each factor $G_i$ is of cotangent type.  If $T_i\subset G_i$ is a maximal torus, set
\[
T=T_1\times T_2,\qquad TG_1=G_1\times T_2,
\qquad TG_2=T_1\times G_2.
\]
After choosing suitable Lagrangians $\bfN_1$ and $\bfN_2$, the pairs $(TG_i,\bfM)$ are of cotangent type.  Their quantized BFN algebras can therefore be embedded, after a natural twist for the second presentation, into the same difference algebra $\Diff_{\hbar}(T)$.

In Definition~\ref{def:noncot}, we define the quantized Coulomb branch of $(G,\bfM)$ by
\[
\mathscr{A}_{\hbar}(G,\bfM)
:=
\mathcal{A}_{\hbar}(TG_1,\bfN_1)_{\operatorname{Frac}H^*_{T_2}(\pt)}
\bigcap
\mathcal{A}_{\hbar}(TG_2,\bfN_2)_{\operatorname{Frac}H^*_{T_1}(\pt)}
\subset \Diff_{\hbar}(T).
\]
The two factors of the intersection impose complementary regularity conditions in the $G_1$- and $G_2$-directions.  The classical justification for this definition is given in Theorem~\ref{thm:compwithBDFRT}.  Under the technical assumption that one of $G_1$ and $G_2$ is semisimple, we prove that the analogous intersection at $\hbar=0$ is isomorphic to the non-cotangent Coulomb branch algebra of \cite{BDFRT}.  This identifies the classical intersection underlying our definition with the established non-cotangent theory, while the difference operator description remains available for quantization and explicit calculation.

Our first main result also includes a practical criterion for producing elements of this algebra.  The two cotangent presentations induce commuting Weyl group actions on $\Diff_{\hbar}(T)$, one of which is twisted by the change of Lagrangian.  Theorem~\ref{thm:noncotsub} shows that a Weyl-invariant element belonging to either one of the unlocalized cotangent Coulomb branch algebras already belongs to $\mathscr{A}_{\hbar}(G,\bfM)$.  More precisely,
\[
\mathcal{A}_{\hbar}(TG_i,\bfN_i)\cap \Diff_{\hbar}(T)^W
\subset \mathscr{A}_{\hbar}(G,\bfM),
\qquad i=1,2.
\]
This criterion is the main tool that allows us to pass from monopole operators in the two auxiliary cotangent theories to operators in the non-cotangent quantized Coulomb branch.

\subsection*{Quasi-minuscule monopole operators}

The second ingredient is a formula for monopole operators in a quantized Coulomb branch of cotangent type.  The localization formula for a dressed monopole operator associated with a minuscule coweight is well known, see \cite{BFN18}.  For a quasi-minuscule coweight $\lambda=\theta^\vee$, however, the affine Grassmannian Schubert variety has an additional zero-dimensional stratum:
\[
\overline{\Gr_{\lambda}}=\Gr_{\lambda}\sqcup\{e\}.
\]
The contribution of this stratum prevents a direct application of the minuscule calculation.

In Section~\ref{sec:quasi} we use the explicit resolution
\[
\mathbb{P}(\mathcal{O}\oplus\mathcal{L})
\longrightarrow \overline{\Gr_{\lambda}}
\]
and construct the corresponding resolved BFN space.  Equivariant localization on this resolution gives Theorem~\ref{thm:quasi}.  In particular, for $f\in\mathbb{C}[\mathfrak{t}]^{W_\lambda}$, it proves that
\[
\sum_{w\in W^\lambda}
w\!\left(
f\,
\frac{\Eu\bigl(z^\lambda\bfN_{\calO}/
(z^\lambda\bfN_{\calO}\cap\bfN_{\calO})\bigr)}
{\Eu(T_\lambda\Gr_\lambda)}
\right)(d_{w\lambda}-1)
\in \mathcal{A}_{\hbar}(G,\bfN).
\]
More precisely, this operator is the image of a geometrically defined class on the resolved BFN space, up to an element of $H^*_{G\times\mathbb{C}^\times_\hbar}(\pt)$.  This result extends the usual minuscule formula and supplies exactly the operator needed at a symplectic vertex of an orthosymplectic quiver.

\subsection*{Orthosymplectic quivers and shifted twisted Yangians}

We then apply the preceding constructions to an orthosymplectic quiver $Q=(Q_0,Q_1,\epsilon)$.  The gauge group and matter representation are
\[
G=\prod_{i\in Q_0}K_{\epsilon_i}(\bbV_i),
\qquad
\bfM=
\bigoplus_{i-j}\bbV_i\otimes\bbV_j
\oplus
\bigoplus_{i\in Q_0}\bbV_i\otimes\bbW_i,
\]
where $K_+=\SO$ and $K_-=\Sp$, and $\epsilon_i=\pm1$ such that adjacent vertices have opposite signs.  We first determine the anomaly cancellation condition (see also \cite{Nak23}): at every symplectic vertex $i$ one must have
\[
\ww_i+\sum_{j-i}\vv_j\equiv0\pmod 2.
\]
Under this condition, $(G,\bfM)$ is of separated cotangent type.  The two auxiliary cotangent presentations are obtained by choosing maximal isotropic subspaces in the orthogonal and symplectic vector spaces.

The first fundamental coweight at an orthogonal vertex is minuscule, whereas the first fundamental coweight at a symplectic vertex is quasi-minuscule.  We can therefore combine the usual minuscule formula, the quasi-minuscule formula of Theorem~\ref{thm:quasi}, and the Weyl-invariance criterion of Theorem~\ref{thm:noncotsub}.  This gives explicit generating series of monopole operators
\[
B_i(u)\in\mathscr{A}^F_{\hbar}(G,\bfM)[[u^{-1}]]
\]
for every vertex $i$, together with explicit Cartan series $H_i(u)$.  The construction is geometric: at an orthogonal vertex the coefficients of $B_i(u)$ are represented by dressed minuscule monopole classes, while at a symplectic vertex they are represented, up to terms from the Gelfand--Tsetlin subalgebra, by classes obtained from the quasi-minuscule resolution described above.

Suppose now that the underlying graph of $Q$ is a Dynkin diagram of ADE type, and let $\mathfrak{g}$ be the corresponding simple Lie algebra.  Define the even coweight
\[
\mu=\sum_{i\in Q_0}
\bigl(\ww_i\Lambda_i-\vv_i\alpha_i^\vee
+2\epsilon_i\Lambda_i\bigr).
\]
Our final main result, Theorem~\ref{thm:main}, states that the assignment
\[
b_i(u)\longmapsto B_i(u),
\qquad
h_i(u)\longmapsto H_i(u)
\]
defines a homomorphism from the shifted twisted Yangian of split type to the localized quantized Coulomb branch algebra:
\[
\Psi:{}^{\iota}\mathbf{Y}_{\mu}(\mathfrak{g})\otimes H_F^*(\pt)
\longrightarrow
\mathscr{A}^F_{\hbar}(G,\bfM)
\otimes_{\mathbb{C}[\hbar]}\mathbb{C}(\hbar).
\]
This gives a geometric realization of the Yangian generators and provides evidence for the proposed relationship between orthosymplectic Coulomb branches and shifted twisted Yangians in \cite{LWW}. 

Related iGKLO-type difference-operator representations have been constructed in \cite{lu2025shifted,bartlett2025gklo}; the former treats general quasi-split ADE types, while the latter treats type $\mathsf{AI}$ (without flavor symmetry).  
Our formulas are closer to the former one; see Remark \ref{rem:gklo} for a comparison. 
The distinguishing feature of the present construction is its geometric origin: our operators are obtained from explicit minuscule and quasi-minuscule monopole classes by equivariant localization, and their role in the non-cotangent quantized Coulomb branch follows from the separated-cotangent criterion. 
In this sense, our formulas are intrinsic from the viewpoint of Coulomb branch geometry.

\subsection*{Organization of the paper}

In Section~\ref{sec:cotCB} we review quantized Coulomb branches of cotangent type and derive the quasi-minuscule monopole formula.  In Section~\ref{sec:noncotCB} we introduce Coulomb branches of separated cotangent type, define their quantization, prove the Weyl-invariance criterion, and compare the classical construction with that of \cite{BDFRT}.  In Section~\ref{sec:orthosymp} we analyze orthosymplectic quivers, verify the anomaly cancellation condition, and compute the monopole series $B_i(u)$ and the Cartan series $H_i(u)$.  In Section~\ref{sec:yangian} we recall the Drinfeld presentation of the shifted twisted Yangian of split ADE type and state the homomorphism to the quantized Coulomb branch.  The defining relations are verified in Section~\ref{sec:veri}.

\subsection*{Acknowledgment}
We would like to thank Michael Finkelberg, Kang Lu, Hiraku Nakajima,  Weiqiang Wang, Alex Weeks, and Ruotao Yang 
for useful discussions.
CS is supported by the National Key R\&D Program of China (No. 2025YFA1017400). YS is partially supported by the Fields Institute and the Science and Technology Commission of Shanghai Municipality (No. 22DZ2229014).

\section{Coulomb branches of cotangent type}\label{sec:cotCB}

In this section, we review the Coulomb branch of cotangent type \cite{BFN18}, including its definition and methods of computation. 
The new result of this section is a general formula for the monopole operator associated with a quasi-minuscule coweight.

\subsection{Definition}
Let $G$ be a complex connected reductive group with a Borel subgroup $B$ and a maximal torus $T$. Let $R$ be the set of roots with positive roots denoted by $R^+$. Let $X_*(T)$ be the cocharacter lattice of $T$ with dominant ones denoted by $X_*(T)^+$. Recall the dominance order on $X_*(T)^+$: for any $\lambda,\mu\in X_*(T)^+$, $\lambda\leq \mu$ iff $\mu-\lambda$ is a nonnegative linear combination of the positive coroots.
Let
$\calO=\mathbb{C}[[z]]$, 
$\calK=\mathbb{C}(\!(z)\!)$. We write $G_\calK:=G(\calK)$, $G_\calO:=G(\calO)$ and $T_\calK:=T(\calK)$, respectively. Any cocharacter $\lambda:\mathbb{G}_m\rightarrow T$ gives a homomorphism $\mathcal{K}^*\rightarrow T_\mathcal{K}$, and we let $z^\lambda$ denote the image of $z\in \mathcal{K}^*$.

Recall the affine Grassmannian is defined to be $\Gr_G=G_\calK/G_\calO$. By the Cartan decomposition,
\[\Gr_G=\bigsqcup_{\lambda\in X_*(T)^+}\Gr_\lambda,\]
where $\Gr_\lambda:=G_\calO z^\lambda G_\calO/G_\calO$. Its closure $\overline{\Gr_\lambda}=\bigsqcup\limits_{\mu\in X_*(T)^+,\mu\leq \lambda}\Gr_\mu$. Moreover, each $\Gr_\lambda$ is an affine bundle over the partial flag variety $G/P_\lambda$, where $P_\lambda$ is the parabolic subgroup generated by $T$ and the root subgroups $U_\alpha$ for any $\alpha\in R$ satisfying $\langle\lambda, \alpha\rangle\leq 0$.

Let $\bfN$ be a complex representation of $G$. Let $\bfN_\calK:=\bfN(\calK)$ and $\bfN_\calO:=\bfN(\calO)$, respectively.
We consider
$$\mathcal{T}_{G,\bfN}
=G_\calK\times_{G_\calO} \bfN_\calO
=\{(gG_\calO,x):x\in g\bfN_\calO\}\subset 
\Gr_G\times \bfN_\calK.$$
The (classical) \emph{BFN space} $\calR_{G,\bfN}$ \cite{BFN18} is defined by the pullback square
$$\xymatrix{
\calR_{G,\bfN} \ar@{^{(}->}[r] \ar@{^{(}->}[d] & \Gr_G\times \bfN_\calO \ar@{^{(}->}[d]\\
\mathcal{T}_{G,\bfN} \ar@{^{(}->}[r]&\Gr_G\times \bfN_\calK.}$$
Written explicitly, we have
$$\calR_{G,\bfN} = \{(gG_\calO,x): x\in g\bfN_\calO\cap \bfN_\calO\}\subset \Gr_G\times \bfN_\calO.$$
Following \cite[Section 2(i)]{BFN18}, we consider the $\mathbb{C}^\times$ action on $\calR_{G,\bfN}$, which rotates $z\in \calO$ by weight $1$ and scales $\bfN$ by weight $\frac{1}{2}$ simultaneously \footnote{We can take a double cover of the $\mathbb{C}^\times$ so that this action is well defined.}. 
Let $\hbar$ be the equivariant parameter of this $\mathbb{C}^*$, and denote this $\mathbb{C}^*$ by $\mathbb{C}^\times_\hbar$. Hence,  
$H_{\mathbb{C}^\times_\hbar}^*(\pt)=\mathbb{C}[\hbar]$. 
The \emph{(quantized) Coulomb branch algebra} is defined to be the Borel--Moore homology
$$\mathcal{A}_\hbar(G,\bfN) = 
H^{G_{\calO}\rtimes\mathbb{C}_\hbar^\times}_*(\calR_{G,\bfN}).$$
endowed with the convolution product.
The subalgebra $H^{G_\calO\rtimes \mathbb{C}_\hbar^\times}(\pt)\subset \mathcal{A}_\hbar(G,\bfN)$ is called the \emph{Gelfand--Tsetlin subalgebra}. 

Let $W$ be the Weyl group, and let $\mathfrak{t}$ be the Lie algebra of $T$, and $\mathbb{A}^1$ be the Lie algebra of $\mathbb{C}^*_\hbar$.
The Coulomb branch algebra can be embedded into the difference algebra of $T$ as follows. 
We define the \emph{difference algebra} $\Diff_\hbar(T)$ of $T$ to be 
$$\Diff_\hbar(T)=\mathbb{C}(\mathfrak{t}\times \mathbb{A}^1)\rtimes X_*(T),$$
where $\mathbb{C}(\mathfrak{t}\times \mathbb{A}^1)$ is the field of rational functions over $\mathfrak{t}\times \mathbb{A}^1$. 
More precisely, $\Diff_\hbar(T)$ is the free $\mathbb{C}(\mathfrak{t}\times \mathbb{A}^1)$-module with basis $d_\mu$ for cocharacters $\mu\in X_*(T)$ such that 
$$(f(t,\hbar)d_\lambda)\cdot (g(t,\hbar)d_\mu)=f(t,\hbar)g(t+\lambda\hbar,\hbar)d_{\lambda+\mu}.$$
We could view elements in $\Diff_\hbar(T)$ as difference operators on $\mathbb{C}(\mathfrak{t}\times \mathbb{A}^1)$.  
By \cite[Lemma 5.10]{BFN18}, we have an embedding
$
\iota_*:\mathcal{A}_\hbar(T,\bfN)^W
\hookrightarrow
\mathcal{A}_\hbar(G,\bfN)$.
By \cite[Lemma 5.9]{BFN18}, it is an isomorphism after base changing to the fraction field of $H_T^*(\pt)$. 
By \cite[Lemma 5.11]{BFN18}, we have the following embedding
$
z^*:\mathcal{A}_\hbar(T,\bfN)
\hookrightarrow
\mathcal{A}_\hbar(T,0)$. 
The explicit computation in \cite[Section 4]{BFN18} implies $\mathcal{A}_\hbar(T,0)\hookrightarrow \Diff_\hbar(T)$ with 
$H_{T_\calO\times \mathbb{C}_\hbar^\times}^*(\pt)$ identified with the $\mathbb{C}[\mathfrak{t}\times \mathbb{A}^1]$. 
As a result, the composition of the above maps gives 
an embedding of algebras
\begin{equation}\label{eq:diff}
z^*\circ (\iota_*)^{-1}:\mathcal{A}_\hbar(G,\bfN)\stackrel{\subset}\longrightarrow \Diff_\hbar(T). 
\end{equation}
In this paper, we will identify 
$\mathcal{A}_\hbar(G,\bfN)$ as a subalgebra of the difference algebra $\Diff_\hbar(T)$. 

\subsection{Minuscule monopole operators}
By the definition of $\calR_{G,\bfN}$, there is a natural projection 
$$\pi:\calR_{G,\bfN}\longrightarrow \Gr_G.$$
Let us denote 
\begin{equation}\label{equ:Rlambda}
    \calR_{G,\bfN,\lambda}=\pi^{-1}(\Gr_\lambda).
\end{equation}
Note that we can identify
$
H_*^{G_\calO}(\calR_{G,\bfN,\lambda}) \simeq H_*^{G_\calO}(\Gr_\lambda) \simeq \mathbb{C}[\mathfrak{t}]^{W_\lambda}$ where $W_\lambda$ is the stabilizer group of $\lambda$ in the Weyl group $W$. 

Recall a cocharacter $\lambda$ is called \emph{minuscule} if 
$\langle \lambda,\alpha\rangle\in \{0,1\}$
for all positive roots $\alpha$. Equivalently, it is a nonzero minimal dominant cocharacter under the dominant order. Hence, if $\lambda$ is minuscule, $\Gr_\lambda$ is closed, and it is isomorphic to $G/P_\lambda$.
For $f\in \mathbb{C}[\mathfrak{t}]^{W_\lambda}\simeq H_*^{G_\calO}(\calR_{G,\bfN,\lambda})$, we consider the following element 
$$f[\calR_{G,\bfN,\lambda}]\in \mathcal{A}_\hbar(G,\bfN),$$
which is called the \textit{dressed minuscule monopole operator}.
Under the embedding in \eqref{eq:diff}, we have the following explicit formula for it:
\begin{equation}\label{eq:minu}
z^*\circ (\iota_*)^{-1}\big(f[\calR_{G,\bfN,\lambda}]\big)=\sum_{w\in W^\lambda}
w\left(f\cdot \frac{\Eu\left(
z^\lambda\bfN_\calO/(z^\lambda\bfN_\calO\cap\bfN_\calO)\right)}{
\Eu(T_{\lambda}\Gr_\lambda)
}\right)
d_{w\lambda}\in \Diff_\hbar(T),
\end{equation}
where $W^\lambda$ is the set of minimal length representatives of cosets in $W/W_\lambda$,  $\Eu(-)$ is the $T\times\mathbb
{C}^*_\hbar$-equivariant Euler class, i.e. the product of weights, and $T_{\lambda}\Gr_\lambda
$ is the tangent space of $\Gr_\lambda$ at the torus fixed point $z^\lambda G_\calO/G_\calO$. Note that we have a bijection $W^\lambda\cong W\cdot \lambda$ by $w\mapsto w\lambda$. It is well known that for any $\mu\in W\lambda$, 
\begin{equation}\label{equ:wts}
    T_\mu\Gr_\lambda = \bigoplus_{\alpha\in R, \langle\mu,\alpha\rangle\geq 1}\bigoplus_{n=0}^{\langle\mu,\alpha\rangle - 1}\mathfrak{g}_\alpha z^n,
\end{equation}
where $R$ is the set of roots of $G$.

Since $\bfN$ is a $G$-representation, 
$$w\Eu(z^\lambda \bfN_\calO/(z^\lambda \bfN_\calO\cap \bfN_\calO))
=\Eu(z^{w\lambda}\bfN_\calO/(z^{w\lambda} \bfN_\calO\cap \bfN_\calO)).$$
If $S$ is a one dimensional $T$-invariant summand of $\bfN$ with $T$ weight $\alpha$, we have
\begin{equation}\label{equ:Euquo}
    \Eu(z^{\lambda}S_\calO/(z^{\lambda}S_\calO\cap S_\calO))=\begin{cases}1 , & m\geq 0,\\(\alpha-\tfrac{\hbar}{2})(\alpha-\tfrac{3\hbar}{2})\cdots(\alpha-\tfrac{(2|m|-1)\hbar}{2}), & m< 0,
\end{cases}
\end{equation}
where $m=\langle \lambda, \alpha\rangle$. The factor $\frac{\hbar}{2}$ appears here since $\mathbb{C}_\hbar^\times$ scales $\bfN$ by weight $\frac{1}{2}$.

\subsection{Quasi-minuscule monopole operators}\label{sec:quasi}
Now let us consider the case that $\lambda$ is quasi-minuscule, i.e., $\lambda=\theta^\vee$ with $\theta$ being a highest root of $G$ (when the root system is irreducible, $\theta$ is unique). For any positive root $\alpha\in R^+\setminus\{\theta\}$, $\langle\lambda, \alpha\rangle\in \{0,1\}$, and $0$ is the only element in $X_*(T)^+$ smaller than $\lambda$. Hence, 
\[\overline{\Gr_\lambda}=\Gr_\lambda\sqcup \{e\},\]
where $e\in \Gr_G$ is the identity point. Moreover, $\Gr_\lambda$ is a line bundle over $G/P_\lambda$. By a computation of the tangent weights, 
\[\Gr_\lambda\simeq G\times_{P_\lambda}\mathbb{C}_\theta\]
as $G$-equivariant line bundles over $G/P_\lambda$. Here $\mathbb{C}_\theta$ is the one-dimensional representation of $P_\lambda$. We let $\mathcal{L}$ denote the line bundle on the right hand side. The explicit isomorphism $\mathcal{L}\simeq \Gr_\lambda$ is given by 
\[(g,t)\mapsto gx_\theta(tz)z^\lambda G_\calO/G_\calO,\]
where $x_\theta(t):=\exp(tE_\theta)$ for $E_\theta$ the root vector in $\mathfrak{g}_\theta$.

\begin{example}
\label{exa:limit}
Let us consider the case $G=SL_2$. 
Then $\theta$ is the unique positive root, and 
$$P_\lambda =\left[\begin{matrix}
{*} & 0\\
{*} & {*}
\end{matrix}\right],\qquad G/P_\lambda \cong \mathbb{P}^1,\qquad \mathcal{L}\cong\mathcal{O}(-2).$$
We have 
$$x_\theta(tz)z^\lambda = \left[\begin{matrix}
1 & zt\\0 & 1 
\end{matrix}\right]
\left[\begin{matrix}
z &0 \\0 & z^{-1}
\end{matrix}\right]
=\left[\begin{matrix}
z & t \\ 0& z^{-1}
\end{matrix}\right].
$$
When $t\neq 0$, we have 
$$x_\theta(tz)z^\lambda\left[\begin{matrix}
0 & -t \\ t^{-1} & z 
\end{matrix}\right] = \left[\begin{matrix}
1 & 0 \\ z^{-1}t^{-1} & 1
\end{matrix}\right]
\stackrel{t\to \infty}
\longrightarrow 
\left[\begin{matrix}
1 & 0 \\ 0 & 1
\end{matrix}\right].$$
We get $\lim\limits_{t\rightarrow \infty}gx_\theta(tz)z^\lambda G_\calO=e$ for any $g\in G$.
\end{example}

There is a resolution of $\overline{\Gr_\lambda}$ given by the 
\[p:\mathbb{P}(\calO\oplus\mathcal{L})\rightarrow \overline{\Gr_\lambda},\]
where the fibers at infinity all map to the identity point $e\in \overline{\Gr_\lambda}$, see \cite{NP01}. 

Note that 
    when restricting to $G\times_P \mathbb{C}_\theta$, the fiberwise quotient 
$p^*(\mathcal{T}_{G,\bfN})/p^*(\mathcal{R}_{G,\bfN})$ is a vector bundle of rank 
$r=\dim(z^{\lambda}\bfN_\calO/(z^{\lambda}\bfN_\calO\cap \bfN_\calO))$; 
    at the infinity points, the fiber of $p^*(\mathcal{R}_{G,\bfN})$ equals the fiber of $p^*(\mathcal{T}_{G,\bfN})$.

\begin{lemma}\label{lem:extofR}
There exists a unique subbundle $\tilde{\mathcal{R}}$ of $p^*(\mathcal{T}_{G,\bfN})$ 
such that 
\begin{itemize}
    \item $\tilde{\mathcal{R}}$ coincides with $p^*(\mathcal{R}_{G,\bfN})$ over $p^{-1}(\Gr_\lambda)=G\times_{P_\lambda}\mathbb{C}_\theta$; 
    \item the fiberwise quotient $p^*(\mathcal{T}_{G,\bfN})/\tilde{\mathcal{R}}$
    is a vector bundle of rank $r$. 
\end{itemize}
Moreover, $\tilde{\mathcal{R}}$ is $G$-invariant and is contained in  $p^*(\mathcal{R}_{G,\bfN})$.
\end{lemma}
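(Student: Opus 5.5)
Since $\mathbb{P}(\calO\oplus\mathcal{L})$ is smooth (hence normal) and irreducible, I will reduce the lemma to the following principle. A subbundle of constant corank $r$ of a vector bundle $E$ corresponds to a section of the Grassmannian bundle $\mathrm{Gr}(\mathrm{rk}-r,E)$. For a reduced normal base, a rational section of a projective morphism extends across codimension one by the valuative criterion. In our setting $p^{-1}(\Gr_\lambda)$ is the complement of the section at infinity $D\cong G/P_\lambda$, which is a divisor. So I would first construct $\tilde{\mathcal R}$ as the saturation of $p^*(\calR_{G,\bfN})|_{p^{-1}(\Gr_\lambda)}$ inside $p^*(\mathcal T_{G,\bfN})$. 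Uniqueness is then automatic: two subbundles agreeing on the dense open set $p^{-1}(\Gr_\lambda)$ agree everywhere, since the Grassmannian bundle is separated. The same argument gives $G$-invariance, because both $p^*(\calR_{G,\bfN})|_{p^{-1}(\Gr_\lambda)}$ and $p^*(\mathcal T_{G,\bfN})$ are $G_\calO$-equivariant.

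To make this concrete and avoid infinite-dimensional issues, I will work in a finite-dimensional truncation. Choose $N$ large so that $z^{N}\bfN_\calO\subset g\bfN_\calO\cap\bfN_\calO$ for all $g$ with $gG_\calO\in\overline{\Gr_\lambda}$, and replace $\mathcal T$ and $\calR$ by their quotients by $z^N\bfN_\calO$. For existence over $D$ I will use explicit local coordinates. By $G$-equivariance it suffices to work near one point of $D$, using the coordinate $s=t^{-1}$ on the fibre of $\mathcal L$ over $gP_\lambda$. Example~\ref{exa:limit}, generalized via the $SL_2$ attached to $\theta$, gives a representative $g x_\theta(tz)z^\lambda k(t)$ with $k(t)\in G_\calK$ regular in $s$ and tending to $1$ as $s\to 0$. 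The fibre of $p^*\mathcal T$ is $g x_\theta(tz)z^\lambda k(t)\bfN_\calO$, trivialized over a neighbourhood of $s=0$. The fibre of $\calR$ is its intersection with $\bfN_\calO$. I would decompose $\bfN$ into $\theta^\vee$-weight spaces and compute this intersection explicitly on the weight $m<0$ pieces. The quotient is spanned by explicit vectors depending polynomially on $s$, and the limit of the corank-$r$ subspace as $s\to0$ is computed directly. This gives the extension and identifies $\tilde{\mathcal R}|_{D}$.

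The remaining claim, $\tilde{\mathcal R}\subset p^*(\calR_{G,\bfN})$, is the containment I would check last. Over the open part it is an equality. At infinity the fibre of $p^*\calR$ equals that of $p^*\mathcal T$, since it is $\bfN_\calO\cap\bfN_\calO$ at $e$, and $\tilde{\mathcal R}$ is a subbundle of $p^*\mathcal T$. So the containment holds fibrewise. It also holds scheme-theoretically, because $\calR\subset\mathcal T$ is a closed condition and $\tilde{\mathcal R}$ is the closure of its restriction to the open part.

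I expect the main obstacle to be showing that the limiting subspace has corank exactly $r$ everywhere on $D$, i.e. that the rational section really is regular along $D$ and not merely defined after saturation in codimension one. If the abstract valuative/normality argument does not suffice, because of the codimension-two subtlety for Grassmannian sections, I will instead rely on the explicit $SL_2$ computation to write down the limit directly and verify the rank.
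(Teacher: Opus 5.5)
Your framework is sound, but the one nontrivial step is left open. The sound parts are:
\begin{itemize}
\item the truncation to a finite-rank Grassmannian bundle;
\item uniqueness from separatedness and the density of $p^{-1}(\Gr_\lambda)$;
\item $G$-invariance from uniqueness;
\item the containment $\tilde{\mathcal{R}}\subset p^*(\calR_{G,\bfN})$, because over the divisor $D$ at infinity the fibre of $p^*(\calR_{G,\bfN})$ is all of $p^*(\mathcal{T}_{G,\bfN})$.
\end{itemize}
The open step is regularity of the extended section along all of $D$. Note that the corank of a limit is automatic in a Grassmannian; what is at stake is regularity. You flag this as the main obstacle and fall back on an explicit $SL_2$ computation, but as described that fallback does not close the gap. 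Letting $s\to 0$ in the fibre coordinate only computes the limit along a single curve through a point of $D$. A rational section can have a limit along every curve through a point without being regular there; think of $(x,y)\mapsto[x:y]$ at the origin. So neither the codimension-one valuative argument nor a fibrewise limit, on its own, produces a subbundle near $D$.

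The missing idea is that homogeneity of $D\cong G/P_\lambda$ removes the codimension-two issue entirely. In your language, consider the domain of definition of the $G$-equivariant rational section, equivalently the locus where the saturated quotient is locally free of rank $r$. It is open and $G$-stable, and it meets $D$ by the codimension-one extension. Since $D$ is a single $G$-orbit, it contains $D$.

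The paper phrases the same idea as $\mathcal{G}=G\times_{P_\lambda}(\mathcal{G}|_{\mathbb{P}^1})$, where $\mathbb{P}^1$ is the fibre over $1\cdot P_\lambda$. The section is therefore determined by its restriction to $\mathbb{A}^1\subset\mathbb{P}^1$. Extending from $\mathbb{A}^1$ to $\mathbb{P}^1$ is automatic by the valuative criterion for a smooth curve and the (ind-)projectivity of $\mathcal{G}$. The extension is $P_\lambda$-equivariant by uniqueness, and its twisted product is then a regular section over all of $\mathbb{P}(\calO\oplus\mathcal{L})$. With this, no explicit computation of the limit is needed for the lemma.
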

\begin{proof}
Note that $\mathcal{T}_{G,\bfN}$ (and thus $p^*(\mathcal{T}_{G,\bfN})$) is a sheaf of sublattices of the trivial bundle $\bfN_\calK$.
Let us consider the bundle
$$\mathcal{G}=\left\{
(R,x): 
\begin{matrix}
    \text{$R$ is a sublattice of the fiber of $p^*(\mathcal{T}_{G,\bfN})$}\\
    \text{at $x\in \mathbb{P}(\mathcal{O}\oplus \mathcal{L})$ of codimension $r$}
\end{matrix}\right\}.$$
The bundle $p^*(\mathcal{R}_{G,\bfN}) 
|_{G\times_{P_\lambda}\mathbb{C}_\theta}$ defines a rational section $s$ of $\mathcal{G}$ regular over $G\times_{P_\lambda}\mathbb{C}_\theta$. 
The Lemma asserts that this rational section $s$ can be extended uniquely to a regular section. 
In particular, the uniqueness follows.

Let us denote the fiber of $\mathcal{L}$ (resp. $\mathbb{P}(\mathcal{O}\oplus \mathcal{L})$)
at $1\cdot P_\lambda$ by $\mathbb{A}^1$ (resp., $\mathbb{P}^1$). 
Note that all the above construction is $G$-equivariant, 
in particular, $\mathcal{G}=G\times_{P_\lambda} (\mathcal{G}|_{\mathbb{P}^1})$ and the rational section $s$ can be reconstructed from $s|_{\mathbb{A}^1}$. 
As a result, it suffices to extend $s$ from $\mathbb{A}^1$ to $\mathbb{P}^1$. 
This follows from the fact that $\mathcal{G}\to \mathbb{P}(\mathcal{O}\oplus \mathcal{L})$ is ind-projective. 

Actually, $\mathcal{G}$ can be approximated by closed subvarieties of Grassmannian bundle 
$$\mathcal{G}_{N}
=\left\{(V,x): 
\begin{matrix}
\text{$V$ is a $z$-invariant subspace of the fiber}\\
\text{of $p^*(\mathcal{T}_{G,\bfN})/
p^*(z^N \mathcal{T}_{G,\bfN})$ at $x\in \mathbb{P}(\mathcal{O}\oplus \mathcal{L})$ of codimension $r$}
\end{matrix}\right\}.$$

Any rational section $s$ is valued in some $\mathcal{G}_N$ for $N\gg 0$. 
\end{proof}

\begin{remark}
The Lemma confirms the question asked in \cite[Remark A.14]{BFN19}. 

Note that here our proof is specific for the quasi-miniscule case. 
For more general $\lambda$, by \cite[Lemma 1.2]{CCL25}, a resolution of $\overline{\Gr_\lambda}$ exists such that a property similar to Lemma \ref{lem:extofR} holds, but the resolution is not explicit and depends on $\mathcal{R}_{G,\bfN}$.  
\end{remark}

We will write $\tilde{\mathcal{R}}=\tilde{\mathcal{R}}_{G,\bfN,\leq \lambda}$ to indicate the dependence of $G$ and $\bfN$. 
By our Lemma \ref{lem:extofR}, 
the map $p$ also induces a proper map $\tilde{p}:\tilde{\calR}_{G,\bfN,\leq \lambda}\rightarrow \mathcal{T}_{G,\bfN}$, with image lying in $\mathcal{R}_{G,\bfN,\leq \lambda}:=\pi^{-1}(\overline{\Gr_\lambda})\subset \calR_{G,\bfN}$. 
We have the following diagram
$$\xymatrix@!C=4pc{
p^*(\mathcal{T}_{G,\bfN})\ar[rr]\ar[dd]
&& \mathcal{T}_{G,\bfN,\leq \lambda}\ar[dd]|{\hole}\ar[rr]
&& \mathcal{T}_{G,\bfN}\ar[dd]|{\hole}\\
& \tilde{\mathcal{R}}_{G,\bfN,\leq \lambda}\ar[ul]\ar[dl]
\ar[rr]^{\tilde{p}\qquad\quad } && \mathcal{R}_{G,\bfN,\leq \lambda}\ar[ul]\ar[dl]\ar[rr]&&
\mathcal{R}_{G,\bfN}.\ar[ul]\ar[dl]\\
\mathbb{P}(\mathcal{O}\oplus \mathcal{L})
\ar[rr]^-{p} &&\overline{\Gr_\lambda}\ar[rr]&&\Gr_G}$$

For any $f\in H_G^*(\Gr_\lambda)\simeq \mathbb{C}[\frak{t}]^{W_\lambda}$, we can pull it back to $\tilde{\calR}_{G,\bfN,\leq \lambda}$ via the projection map $\tilde{\calR}_{G,\bfN,\leq \lambda}\rightarrow\mathbb{P}(\calO\oplus\mathcal{L})\rightarrow G/P_\lambda$. Therefore, we can consider the class
\[\tilde{p}_*(f[\tilde{\calR}_{G,\bfN,\leq \lambda}])\in \mathcal{A}_\hbar(G,\bfN), \]
and we have the following formula.

\begin{theorem}\label{thm:quasi}
    With the above notations,
    \begin{align*}
    &z^*\circ (\iota_*)^{-1}\big(\tilde{p}_*(f[\tilde{\calR}_{G,\bfN,\leq \lambda}])\big)\\
    =&\sum_{w\in W^\lambda}w\left(f\cdot \frac{\Eu\left(z^\lambda\bfN_\calO/(z^\lambda\bfN_\calO\cap\bfN_\calO)\right)}{\Eu(T_{\lambda}\Gr_\lambda)}\right)(d_{w\lambda}-1)+\cdots\in \Diff_\hbar(T),
\end{align*}
where $\cdots$ denotes some polynomial in $H_{G\times \mathbb{C}^\times_\hbar}^*(\pt)$. In particular, 
\[\sum_{w\in W^\lambda}w\left(f\cdot \frac{\Eu\left(z^\lambda\bfN_\calO/(z^\lambda\bfN_\calO\cap\bfN_\calO)\right)}{\Eu(T_{\lambda}\Gr_\lambda)}\right)(d_{w\lambda}-1)\in \mathcal{A}_\hbar(G,\bfN).\]
\end{theorem}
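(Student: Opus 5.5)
We compute the image of $\tilde{p}_*(f[\tilde{\calR}_{G,\bfN,\leq \lambda}])$ in $\Diff_\hbar(T)$ by torus localization, following the strategy that yields \eqref{eq:minu} in \cite[Section 4]{BFN18}. First reduce to $T$. Under $(\iota_*)^{-1}$ followed by $z^*$, the class is determined by its restrictions to the $T$-fixed points of $\Gr_G$, multiplied by the relative Euler classes of $\mathcal{T}/\mathcal{R}$ there. The $T\times\mathbb{C}^\times_\hbar$-fixed points of $\mathbb{P}(\calO\oplus\mathcal{L})$ are of two kinds. The zero-section points $wP_\lambda\in G/P_\lambda$, $w\in W^\lambda$, map under $p$ to $z^{w\lambda}$. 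The infinity-section points over $wP_\lambda$ map to $e$. So we apply the localization formula on the smooth resolution, with $\tilde{\calR}$ a vector bundle over it by Lemma \ref{lem:extofR}. The $T$-fixed points of $\tilde{p}$ then contribute to $d_{w\lambda}$ (zero section) and to $d_0=1$ (infinity section).

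At a zero-section point $w$, the tangent space of $\mathbb{P}(\calO\oplus\mathcal{L})$ is $T_{w\lambda}\Gr_\lambda$, since $p$ is an isomorphism there. By the first bullet of Lemma \ref{lem:extofR}, the fiber quotient is $z^{w\lambda}\bfN_\calO/(z^{w\lambda}\bfN_\calO\cap\bfN_\calO)$. The class $f$ restricts to $w(f)$. This reproduces exactly the coefficient
\[
w\Bigl(f\cdot \tfrac{\Eu(z^\lambda\bfN_\calO/(z^\lambda\bfN_\calO\cap\bfN_\calO))}{\Eu(T_\lambda\Gr_\lambda)}\Bigr)
\]
of $d_{w\lambda}$, as in \eqref{eq:minu}.

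At an infinity point over $wP_\lambda$, the tangent space is $T_{w}(G/P_\lambda)\oplus(\text{fiber coordinate at }\infty)$. The fiber coordinate at $\infty$ has weight $-w(\theta)$ up to the $\hbar$-shift coming from the $z$ in $x_\theta(tz)$; more precisely, the fiber $\mathcal{L}$ has weight $w\theta-\hbar$ (or similar). The key point is that $\Eu(T_{w\lambda}\Gr_\lambda)=\Eu(T_w(G/P_\lambda))\cdot(\text{fiber weight at }0)$, and the fiber weight at $\infty$ is the negative of that at $0$. The quotient $p^*\mathcal{T}/\tilde{\calR}$ at the infinity point is an $r$-dimensional $T$-stable quotient of $\bfN_\calO$. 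Its $T$-weights are the specialization at $t\to\infty$ of the weights on the affine line. Since the weights of a $T$-equivariant bundle on $\mathbb{P}^1$ at the two fixed points differ by multiples of the tangent weight, the Euler class at $\infty$ is congruent to the one at $0$. The infinity contribution is then
\[
-\,w\Bigl(f\cdot \tfrac{\Eu(z^\lambda\bfN_\calO/(z^\lambda\bfN_\calO\cap\bfN_\calO))}{\Eu(T_\lambda\Gr_\lambda)}\Bigr)+(\text{correction}),
\]
with the correction divisible by the fiber weight and hence cancelling the pole. This is the main obstacle: identifying the $T\times\mathbb{C}^\times_\hbar$-weights of the limit lattice $\tilde{\calR}|_\infty$ precisely enough to control the difference.

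To handle it, I would not compute this limit lattice explicitly. Instead, consider the pushforward of $f[\tilde{\calR}]$ to the $\mathbb{P}^1$-bundle direction alone, i.e.\ integrate over each fiber $\mathbb{P}^1$ over $wP_\lambda$. By localization on $\mathbb{P}^1$, the sum of the $0$-contribution and the $\infty$-contribution of the equivariant class $\Eu(p^*\mathcal{T}/\tilde{\calR})\cdot f/\Eu(T_w(G/P_\lambda))$ is
\[
\frac{A_0}{\chi}+\frac{A_\infty}{-\chi},
\]
where $\chi$ is the fiber weight. Writing $A_\infty=A_0+\chi\cdot Q$ with $Q$ a polynomial, we get
\[
\frac{A_0}{\chi}\,d_{w\lambda}-\frac{A_0}{\chi}\cdot 1 - Q .
\]
The total coefficient of $d_0$ is thus $-\sum_w w(\cdots)$ plus $-\sum_w Q_w/\Eu(T_w(G/P_\lambda))$. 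The latter is a localization formula on $G/P_\lambda$ for a genuine equivariant class, so it is a polynomial in $H^*_{G\times\mathbb{C}^\times_\hbar}(\pt)$, and it is $W$-invariant by $G$-equivariance. This gives the displayed formula, with $\cdots$ denoting that polynomial.

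The final assertion follows because $\tilde{p}_*(f[\tilde{\calR}])\in\mathcal{A}_\hbar(G,\bfN)$ by properness of $\tilde{p}$ and its image lying in $\calR_{G,\bfN}$. Moreover, $H^*_{G\times\mathbb{C}^\times_\hbar}(\pt)$ lies in the Gelfand--Tsetlin subalgebra, so subtracting the polynomial keeps us in $\mathcal{A}_\hbar(G,\bfN)$.
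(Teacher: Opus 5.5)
Your proposal is correct and follows essentially the paper's route: localization on $\mathbb{P}(\calO\oplus\mathcal{L})$ at the zero- and infinity-points over each $wP_\lambda$; the fact that the fiber tangent weights there are $\pm(\theta+\hbar)$ (at $1\cdot P_\lambda$), so the Euler classes of $p^*\mathcal{T}_{G,\bfN}/\tilde{\calR}$ at the two points agree modulo $\theta+\hbar$ (the paper proves this with an equivariant Birkhoff--Grothendieck lemma, and your fiber-integral remark gives the same divisibility directly); and localization on $G/P_\lambda$ to show that the leftover constant lies in $H^*_{G\times\mathbb{C}^\times_\hbar}(\pt)$. One notational slip: take $A_0,A_\infty$ to be the polynomials $w(f)\Eu(\cdot)$ without the factor $1/\Eu(T_w(G/P_\lambda))$, since otherwise you divide by it twice and $Q$ is not a polynomial.
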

\begin{proof}
    The idea of the proof is similar to the one in \cite[Theorem A.13]{BFN19}. The torus fixed points in $\tilde{\calR}$ are $\{(w\cdot P_\lambda, 0), (w\cdot P_\lambda, \infty)\mid w\in W^\lambda\}$. 

    Since everything is $G$-equivariant, we only need to compute the localization contribution when $w\in W^\lambda$ is the identity element. By definition, 
    \[\begin{aligned}
    \Eu(T_{(1\cdot P_\lambda, 0)}\mathbb{P}(\calO\oplus\mathcal{L}))
    & =(\theta+\hbar)\cdot \Eu(T_{1\cdot P_\lambda}G/P_\lambda)
    =\Eu(T_\lambda \Gr_\lambda),\\
    \Eu(T_{(1\cdot P_\lambda, \infty)}\mathbb{P}(\calO\oplus\mathcal{L}))
    & =(-\theta-\hbar)\cdot \Eu(T_{1\cdot P_\lambda}G/P_\lambda)
    =-\Eu(T_\lambda \Gr_\lambda).
    \end{aligned}\]
    Hence, by localization we get
    \begin{align*}
        &z^*\circ (\iota_*)^{-1}\big(\tilde{p}_*(f[\tilde{\calR}])\big)\\
        =&\sum_{w\in W^\lambda}w\bigg(f\frac{\Eu\big((p^*(\mathcal{T}_{G,\bfN})/\tilde{\calR})_{(1\cdot P_\lambda, 0)}\big)}{\Eu(T_{(\lambda, 0)}\mathbb{P}(\calO\oplus\mathcal{L}))}\bigg)d_{w\lambda}+\sum_{w\in W^\lambda}w\bigg(f\frac{\Eu\big((p^*(\mathcal{T}_{G,\bfN})/\tilde{\calR})_{(1\cdot P_\lambda, \infty)}\big)}{\Eu(T_{(1\cdot P_\lambda,\infty)}\mathbb{P}(\calO\oplus\mathcal{L}))}\bigg)\\
        =&\sum_{w\in W^\lambda}w\bigg(f\frac{\Eu\big(z^\lambda\bfN_\calO/(z^\lambda\bfN_\calO\cap\bfN_\calO)\big)}{\Eu(T_\lambda\Gr_\lambda)}\bigg)d_{w\lambda}-\sum_{w\in W^\lambda}w\bigg(f\frac{\Eu\big((p^*(\mathcal{T}_{G,\bfN})/\tilde{\calR})_{(1\cdot P_\lambda, \infty)}\big)}{\Eu(T_\lambda\Gr_\lambda)}\bigg).
    \end{align*}
Suppose the $T\times \mathbb{C}^\times_\hbar$-weights in $z^\lambda\bfN_\calO/(z^\lambda\bfN_\calO\cap\bfN_\calO)$ are $\{\gamma_1,\cdots, \gamma_d\}$. 
The fiber of $\mathbb{P}(\mathcal{O}\oplus \mathcal{L})$ over $1\cdot P_\lambda$ is isomorphic to $\mathbb{P}^1$. Restricting the vector bundle $p^*(\mathcal{T}_{G,\bfN})/\tilde{\mathcal{R}}$ to this fiber and applying Lemma \ref{lem:BirkhoffGrothendieckTh} below, we obtain that
the $T\times \mathbb{C}^\times_\hbar$-weights in $\big(p^*(\mathcal{T}_{G,\bfN})/\tilde{\calR}\big)_{(1\cdot P_\lambda, \infty)}$ can be ordered as $\gamma_1',\ldots,\gamma_d'$ with $\gamma_i'-\gamma_i\in \mathbb{Z}(\theta+\hbar)$. Therefore, 
\[F:=f\frac{\Eu\big(z^\lambda\bfN_\calO/(z^\lambda\bfN_\calO\cap\bfN_\calO)\big)}{\theta+\hbar}-f\frac{\Eu\big((p^*(\mathcal{T}_{G,\bfN})/\tilde{\calR})_{(1\cdot P_\lambda, \infty)}\big)}{\theta+\hbar}\in H_T^*(\pt)[\hbar].\]
Moreover, they are $W_\lambda$ invariant as $f, \Eu\big(z^\lambda\bfN_\calO/(z^\lambda\bfN_\calO\cap\bfN_\calO)\big),\Eu\big((p^*(\mathcal{T}_{G,\bfN})/\tilde{\calR})_{(1\cdot P_\lambda, \infty)}\big)$ and $\theta+\hbar$ are all $W_\lambda$-invariant.
Hence, by the localization theorem on $G/P_\lambda$, we get that
\begin{align*}
    &\sum_{w\in W^\lambda}w\bigg(f\frac{\Eu\big(z^\lambda\bfN_\calO/(z^\lambda\bfN_\calO\cap\bfN_\calO)\big)}{\Eu(T_\lambda\Gr_\lambda)}\bigg)-\sum_{w\in W^\lambda}w\bigg(f\frac{\Eu\big((p^*(\mathcal{T}_{G,\bfN})/\tilde{\calR})_{(1\cdot P_\lambda, \infty)}\big)}{\Eu(T_\lambda\Gr_\lambda)}\bigg)\\
    =&\sum_{w\in W^\lambda}w\bigg(\frac{F}{\Eu(T_{1\cdot P_\lambda}G/P_\lambda)}\bigg)\in H_{G\times \mathbb{C}^\times_\hbar}^*(\pt).
\end{align*}
This finishes the proof of the theorem.
\end{proof}

\begin{lemma}\label{lem:BirkhoffGrothendieckTh}
Let $\mathbb{T}$ be a torus acting on $\mathbb{P}^1$ with fixed points $0,\infty$.
Let $\mathcal{V}$ be a $\mathbb{T}$-equivariant vector bundle. 
Then there exists a bijection $\psi$ between the weights of the fiber of $\mathcal{V}$ at $0$ and at $\infty$ such that 
$$\psi(\gamma)-\gamma\in \mathbb{Z}\tau$$
where $\tau$ is the $\mathbb{T}$-weight of $T_0\mathbb{P}^1$. 
\end{lemma}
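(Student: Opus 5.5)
The plan is to use the Birkhoff--Grothendieck theorem in its equivariant form, which reduces everything to line bundles. Let $\mathbb{T}$ act on $\mathbb{P}^1$, possibly non-faithfully; if $\tau=0$ the action is trivial and the claim is immediate, so assume $\tau\neq 0$. Then $\mathbb{T}$ acts through a character on the homogeneous coordinates. After passing to a finite cover of $\mathbb{T}$, which does not affect rational weights, we may take the action to be $t\cdot[x:y]=[\chi_1(t)x:\chi_2(t)y]$, with $\tau$ given as an explicit difference of the $\chi_i$.

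\textbf{Step 1: split $\mathcal{V}$.} The first step is to show that $\mathcal{V}$ splits $\mathbb{T}$-equivariantly as a direct sum of equivariant line bundles. For this I would use the Harder--Narasimhan filtration of $\mathcal{V}\cong\bigoplus\mathcal{O}(a_i)$, which is canonical and therefore $\mathbb{T}$-stable. Its graded pieces are $\mathcal{O}(a)\otimes U_a$, where $U_a=\Hom(\mathcal{O}(a),\mathcal{V}^{\geq a})$ is a $\mathbb{T}$-representation, and these also carry a $\mathbb{T}$-structure. The extensions between the pieces are governed by $H^1(\mathcal{O}(a-b))$ with $a>b$, and this group vanishes. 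Since $\mathbb{T}$ is reductive, the equivariant extension classes vanish as well, so the filtration splits equivariantly. Decomposing each $U_a$ into characters then writes $\mathcal{V}$ as a sum of equivariant line bundles.

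\textbf{Step 2: the line bundle case.} Any two equivariant structures on the line bundle $\mathcal{O}(a)$ differ by a character of $\mathbb{T}$, since the automorphisms of $\mathcal{O}(a)$ are scalars. Tensoring with a character shifts the weights at $0$ and at $\infty$ by the same amount, so it suffices to check one equivariant structure. Take $\mathcal{O}(a)=(T\mathbb{P}^1)^{\otimes a/2}$ when $a$ is even. For odd $a$ we can use $\mathcal{O}(1)$ with its linearization from the coordinates, whose fibers at the two points have weights $-\chi_1$ and $-\chi_2$; these differ by $\pm\tau$.

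More cleanly, one can compute $T_0\mathbb{P}^1$ and $T_\infty\mathbb{P}^1$ directly: they have weights $\tau$ and $-\tau$. Hence $\mathcal{O}(2)$ has weights differing by $2\tau$, and $\mathcal{O}(1)$ has weights differing by exactly $\tau$, because $\tau=\chi_2-\chi_1$ up to sign. For $\mathcal{O}(a)$ the difference between the weights at $\infty$ and at $0$ is therefore $\pm a\tau\in\mathbb{Z}\tau$. Summing over the line bundles from Step 1 gives the bijection $\psi$.

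\textbf{Main obstacle.} The step I expect to need the most care is Step 1. One has to justify that equivariant extension classes vanish whenever the non-equivariant $H^1$ does, which follows by taking $\mathbb{T}$-invariants of $H^1$ since $\mathbb{T}$ is reductive. One also has to handle non-faithful actions of $\mathbb{T}$, where passing to a finite cover requires allowing rational weights; this does not change the conclusion, because all equalities involved are linear.
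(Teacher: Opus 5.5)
Your argument is correct and is essentially the paper's. Both proofs first split $\mathcal{V}$ equivariantly into line bundles, using reductivity of $\mathbb{T}$ to upgrade the non-equivariant Birkhoff--Grothendieck splitting. You do this in one go via the $\mathbb{T}$-stable Harder--Narasimhan filtration, while the paper peels off a maximal-degree line subbundle given by a weight vector in $H^0(\mathcal{V}(-d))$ and inducts. Both then check line bundles up to a character twist: the paper uses $\mathcal{I}_0^m\otimes\mathbb{C}_\chi$, and you use the coordinate linearization of $\mathcal{O}(1)$.

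Two harmless slips. First, $U_a$ should be $\Hom(\mathcal{O}(a),\mathcal{V}^{\geq a}/\mathcal{V}^{>a})$ rather than $\Hom(\mathcal{O}(a),\mathcal{V}^{\geq a})$. Second, no finite cover is needed: the image of $\mathbb{T}$ in $PGL_2$ lies in the diagonal torus, which lifts as $t\mapsto\diag(1,c(t))$ for a character $c=\pm\tau$.
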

\begin{proof}
This follows from the $\mathbb{T}$-equivariant Birkhoff--Grothendieck Theorem. Since we did not find a reference, we sketch a proof here for readers' convenience.

Let $d$ be the maximal integer such that $H^0(\mathbb{P}^1,\mathcal{V}(-d))\neq 0$. Fix a $\mathbb{T}$-equivariant structure on $\mathcal{O}(d)$. Then the space $H^0(\mathbb{P}^1,\mathcal{V}(-d))$ is a finite-dimensional $\mathbb{T}$-representation. Choose a non-zero weight vector $s$ in it. After modifying the $\mathbb{T}$-equivariant structure on $\mathcal{O}(d)$ by some $\mathbb{T}$-character, $s$ defines a $\mathbb{T}$-equivariant morphism
\[s:\mathcal{O}(d)\rightarrow \mathcal{V}.\]
By the maximality of $d$, $s$ is a nowhere vanishing morphism. Hence, its cokernel $\mathcal{Q}$ is a vector bundle, and we get a $\mathbb{T}$-equivariant short exact sequence 
\[0\rightarrow \mathcal{O}(d)\rightarrow \mathcal{V}\rightarrow \mathcal{Q}\rightarrow 0.\]

By the usual non-equivariant Birkhoff--Grothendieck Theorem, this sequence splits non-equivariantly. Hence,
$\Hom(\mathcal{Q},\mathcal{V})\rightarrow\Hom(\mathcal{Q},\mathcal{Q})$ is surjective. Taking $T$-invariants, we still get a surjective map. Hence, we get a $\mathbb{T}$-equivariant splitting.

Thus the Lemma follows from the case when $\mathcal{V}$ is a line bundle. Then we can assume 
$\mathcal{V}=\mathcal{I}_{0}^m\otimes \mathbb{C}_\chi$ for some integer $m\in \mathbb{Z}$ and some $\mathbb{T}$-character $\chi$. Here $\mathcal{I}_{0}$ is the ideal sheaf of $\{0\}$. Note that 
$$\begin{aligned}
\Eu(\mathcal{V}_0) & = -m \tau +\chi,
& \qquad 
\Eu(\mathcal{V}_\infty) & = \chi\\
\end{aligned}.$$
Clearly, $-m\tau+\chi-\chi\in \mathbb{Z}\cdot \tau$. 
\end{proof}

\begin{figure}[hhhhhhhhh]
    \centering
$$
\begin{matrix}
\begin{tikzpicture}[scale=0.75]
\clip (-4,4.7) rectangle (4,-4.7);
\foreach \i in {-9,...,6}
    \draw[line width=7,yellow!50!white] (-6,\i-1.5) to (6,\i+4.5);
\draw[->] (-4,0) to (4,0); 
\draw[->] (0,-4.5) to (0,4.5) node [right] {\(z\)}; 
\foreach \i in {-3,-1,1,3}
    \foreach \j in {-4,...,4}
        \node (my\i\j) at (\i,\j) {%
            \ifnum\i>\j%
                \color{lightgray}
            \else 
                \ifnum\j<0
                    \color{red}
                \else
                    \color{black}
            \fi\fi$\bullet$};
\draw[ultra thick,rounded corners] (-3.3,4.5) to (-3.3,-0.3) to (0,-0.3) to (3.3,3.0) to (3.3, 4.5);
\draw[ultra thick,red,rounded corners] (-3.6,4.5) to (-3.6,-3.6) to (-3.0,-3.6) to  (3.6,3.0) to (3.6, 4.5);
\end{tikzpicture}\\
\textnormal{the fiber at $(1\cdot P_\lambda,0)$}
\end{matrix}
\qquad 
\begin{matrix}
\begin{tikzpicture}[scale=0.75]
\clip (-4,4.7) rectangle (4,-4.7);
\foreach \i in {-9,...,6}
    \draw[line width=7,yellow!50!white] (-6,\i-1.5) to (6,\i+4.5);
\draw[->] (-4,0) to (4,0); 
\draw[->] (0,-4.5) to (0,4.5) node [right] {\(z\)}; 
\foreach \i in {-3,-1,1,3}
    \foreach \j in {-4,...,4}
        \node (my\i\j) at (\i,\j) {%
            \ifnum\j<0%
                \color{lightgray}
            \else 
                \ifnum\the\numexpr4*\j-\i-2\relax<0
                    \color{red}
                \else
                    \color{black}
            \fi\fi$\bullet$};
\draw[ultra thick,rounded corners] (-3.3,4.5) to (-3.3,-0.3) to (3.3,1.35)  to (3.3,3.0) to (3.3, 4.5);
\draw[ultra thick,red,rounded corners] (-3.6,4.5) to (-3.6,-0.5) to (3.6,-0.5) to (3.6,4.5);
\end{tikzpicture}\\
\textnormal{the fiber at $(1\cdot P_\lambda,\infty)$}
\end{matrix}
$$
\caption{Computation of the limit}
    \label{fig:limit}
\end{figure}

\begin{example}
Consider the case $\bfN=\mathfrak{g}$ is the adjoint representation of $G$ and take $f=1$, we get 
$$\sum_{w\in W^\lambda}
w\left(\frac{-\theta-\tfrac{3\hbar}{2}}{\theta+\hbar}
\prod_{\langle\theta^\vee,\alpha\rangle>0}\frac{-\alpha-\tfrac{\hbar}{2}}{\alpha}
\right)(d_{w\lambda}-1)\in \mathcal{A}_\hbar(G,\bfN).$$
This agrees with \cite[Section 6]{macdonald2000orthogonal}; see also \cite[Theorem A.13]{BFN19}. 
\end{example}

\begin{example}
Let us consider the case $G=SL_2$, and $$\bfN = \operatorname{span}(v,Ev,E^2v,E^3v)$$
the four-dimensional irreducible representation with lowest weight vector $v$. Then $\lambda=\alpha^\vee$, with $\alpha$ being the positive root. The lowest weight vector $v$ has torus weight $-\frac{3\alpha}{2}$.
By definition, 
the fiber of $p^*(\mathcal{R}_{G,\bfN})$ at $(1\cdot P_\lambda,0)$ is 
$$
R:=\operatorname{span}(z^{i}v:i\geq 0)\oplus 
\operatorname{span}(z^{i}Ev:i\geq 0)\oplus 
\operatorname{span}(z^{i}E^2v:i\geq 1)\oplus 
\operatorname{span}(z^{i}E^3v:i\geq 3).$$
The limit, i.e. the fiber of $\tilde{\mathcal{R}}_{G,\bfN,\leq\lambda}$ at $(1\cdot P_\lambda,\infty)$ is 
$$
\operatorname{span}(z^{i}v:i\geq 0)\oplus 
\operatorname{span}(z^{i}Ev:i\geq 1)\oplus 
\operatorname{span}(z^{i}E^2v:i\geq 1)\oplus 
\operatorname{span}(z^{i}E^3v:i\geq 2).$$
The computation can be illustrated in Figure \ref{fig:limit}, where 
\begin{itemize}
    \item the horizontal axis corresponds to the $T$-weights;
    \item the vertical axis corresponds to the $\mathbb{C}^\times_\hbar$-weights; 
    \item 
the black dots correspond to the weights of $\tilde{\mathcal{R}}_{G,\bfN,\leq\lambda}$; 
    \item 
the red dots correspond to the weights of $p^*(\mathcal{T}_{G,\bfN})/\tilde{\mathcal{R}}_{G,\bfN,\leq\lambda}$;
    \item the yellow strips correspond to the action of $x_\theta(tz)$. 
\end{itemize}
Let us explain why $z^2E^3v$ and $zE^2v$ are contained in the limit as an example. 

For $t\neq 0$, we have 
$$\begin{aligned}
x_{\theta}(tz)zE^2v & = zE^2 v + tz^2 E^3v
= t\cdot \big(\tfrac{1}{t}zE^2v+z^2E^3v\big)\\
x_{\theta}(tz)Ev & = Ev+tzE^2v +\frac{(tz)^2}{2}E^3v
= t\cdot \big(\tfrac{1}{t}Ev+\tfrac{z}{2}E^2v
+\tfrac{1}{2}x_{\theta}(tz)zE^2v\big).
\end{aligned}$$
Thus 
$\tfrac{1}{t}zE^2v+z^2E^3v$ and $\tfrac{1}{t}Ev+\tfrac{z}{2}E^2v$
are contained in the fiber of $p^*(\mathcal{R}_{G,\bfN})$ at $(1\cdot P_\lambda,t)$. 
As a result, their limits 
$$z^2E^3v=\lim_{t\to\infty} \tfrac{1}{t}zE^2v+z^2E^3v,\qquad 
\tfrac{z}{2}E^2v= \lim_{t\to\infty}\tfrac{1}{t}Ev+\tfrac{z}{2}E^2v$$
are contained in the fiber of $\tilde{\mathcal{R}}$ at $(1\cdot P_\lambda,\infty)$.
\end{example}

\subsection{More properties of the Coulomb branch}
The quantized Coulomb branch algebra of the cotangent type only depends on $\bfN\oplus \bfN^*$. Precisely, assume we have two choices of $\bfN$
\begin{equation}\label{eq:twodecomp}
\bfN_1\oplus \bfN_1^*\cong \bfN_2\oplus \bfN_2^*.     
\end{equation}

Let $\sigma$ be the automorphism 
\[\sigma:\Diff_\hbar(T)\rightarrow \Diff_\hbar(T)\]
which is the identity on $\mathbb{C}(\mathfrak{t}\times\mathbb{A}^1)$ and for any cocharacter $\lambda$ of $T$, 
\begin{equation}\label{equ:sigma}  \sigma\bigg(\Eu(z^\lambda\bfN_{2,\calO}/(z^\lambda\bfN_{2,\calO}\cap\bfN_{2,\calO})d_\lambda\bigg)=\pm
\Eu(z^\lambda\bfN_{1,\calO}/(z^\lambda\bfN_{1,\calO}\cap\bfN_{1,\calO}))d_\lambda,
\end{equation}
where
$$\pm = 
(-1)^{\sum_\chi\max(\chi(\lambda),0)\dim \underline{\bfN}(\chi)}$$
for a $T$-complement $\underline{\bfN}$ of $\bfN_1\cap \bfN_2$ in $\bfN_2$. 
Then $\sigma$ restricts to an isomorphism between $\mathcal{A}_\hbar(G,\bfN_2)$ and $\mathcal{A}_\hbar(G,\bfN_1)$, see \cite[Section 6(viii)]{BFN18}.

Let $F$ be a reductive group and $\tilde{G}=G\times F$. 
We assume further that the $G$-representation $\bfN$ can be extended to a $\tilde{G}$-representation. 
Then we can slightly extend the above definition by 
$$\mathcal{A}_\hbar^F(G,\bfN) := 
H^{(G_{\calO}\times F_{\calO})\rtimes\mathbb{C}_\hbar^\times}_*(\calR_{G,\bfN}).$$

Then the embedding \eqref{eq:diff} becomes
$$\mathcal{A}_\hbar^F(G, \bfN)\hookrightarrow H_F^*(\pt)\otimes\Diff_\hbar(T),$$
and the formulas in \eqref{eq:minu} and Theorem \ref{thm:quasi} still hold.

\section{Coulomb branches of separated cotangent type}\label{sec:noncotCB}

In this section, we propose a definition of the quantized Coulomb branch algebra of noncotangent type in the special case called \emph{separated cotangent type}. 
We prove it is compatible with the definition \cite{BDFRT} of (commutative) Coulomb branch algebra. The results in this section also work if there is a flavor symmetry group $F$.

\subsection{Separated Cotangent Coulomb branch}
Let $\bfM$ be a symplectic representation of $G$, i.e. $\rho:G\to \Sp(\bfM)$.

We say $(G,\bfM)$ is of \emph{separated cotangent type} if 
$$G=G_1\times G_2$$
such that $(G_i,\bfM)$ is of cotangent type for $i=1,2$.

Let us pick a maximal torus $T_i$ of $G_i$, and denote 
$$T=T_1\times T_2,\qquad 
TG_1=G_1\times T_2,\qquad 
TG_2=T_1\times G_2.$$

\begin{lemma}
For $i=1,2$, $(TG_i,\bfM)$ is of cotangent type. 
\end{lemma}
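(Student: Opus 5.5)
We treat $i=1$; the case $i=2$ is symmetric. By hypothesis $(G_1,\bfM)$ is of cotangent type, so there is a $G_1$-stable Lagrangian $\bfN_1'\subset\bfM$ with $\bfM\cong\bfN_1'\oplus(\bfN_1')^*$ as $G_1$-representations. Since $\bfN_1'$ need not be $T_2$-stable, the plan is to use the commuting $T_2$-action to decompose $\bfM$ into $G_1\times T_2$-isotypic pieces and choose a Lagrangian in each piece.

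First, write $\bfM=\bigoplus_{\chi\in X^*(T_2)}\bfM_\chi$ as the $T_2$-weight decomposition. Each $\bfM_\chi$ is $G_1$-stable, because $G_1$ commutes with $T_2$. Since $T_2$ preserves the symplectic form $\omega$, we have $\omega(\bfM_\chi,\bfM_{\chi'})=0$ unless $\chi+\chi'=0$. Hence
\[
\bfM=\bigoplus_{\{\chi,-\chi\},\,\chi\neq0}(\bfM_\chi\oplus\bfM_{-\chi})\;\oplus\;\bfM_0
\]
is an orthogonal decomposition into nondegenerate $G_1\times T_2$-stable symplectic subspaces. For $\chi\neq0$, pick one representative $\chi$ of each pair. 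Then $\bfM_\chi$ is a $G_1\times T_2$-stable Lagrangian in $\bfM_\chi\oplus\bfM_{-\chi}$, and $\omega$ identifies $\bfM_{-\chi}\cong\bfM_\chi^*$ as $G_1\times T_2$-representations. So these summands are of cotangent type for $TG_1$.

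It remains to handle $\bfM_0$, which is a symplectic $G_1$-representation on which $T_2$ acts trivially. Here we must show that $\bfM_0$ is of cotangent type for $G_1$; any $G_1$-stable Lagrangian in it is then automatically $TG_1$-stable. This is the main obstacle, since the cotangent structure on $\bfM$ does not obviously restrict to $\bfM_0$. The plan is to use the standard criterion that a symplectic representation of a reductive group is of cotangent type if and only if, for each irreducible $V$, the multiplicity space of $V$ is a symplectic space of even dimension when $V$ is orthogonal (self-dual with an invariant symmetric form). Equivalently, in the decomposition of $\bfM$ into $V\otimes\mathrm{Mult}_V$, the orthogonal-type isotypic components must carry symplectic multiplicity spaces, and those of even dimension admit Lagrangians. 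For non-self-dual or symplectic-type $V$ there is no obstruction.

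We then apply this criterion as follows. For each orthogonal-type irreducible $V$ of $G_1$, the multiplicity space $\mathrm{Mult}_V(\bfM)$ is a symplectic $T_2$-representation, hence even-dimensional, and it splits orthogonally as $\bigoplus_\chi$ of $\mathrm{Mult}_V(\bfM)_\chi$ in the same way as above. The zero-weight part $\mathrm{Mult}_V(\bfM_0)=\mathrm{Mult}_V(\bfM)_0$ is therefore nondegenerate, hence symplectic and even-dimensional. Choosing a Lagrangian $L_V\subset\mathrm{Mult}_V(\bfM_0)$ gives the $G_1$-stable Lagrangian $V\otimes L_V$. For a pair $V\not\cong V^*$, we take $V\otimes\mathrm{Mult}_V(\bfM_0)$ as the Lagrangian in the $V\oplus V^*$ isotypic part. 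For symplectic-type $V$, the multiplicity space carries an orthogonal form, and we take $V\otimes L$ with $L$ a maximal isotropic subspace; this is Lagrangian after a possible further splitting using the symplectic form on $V$ itself when the multiplicity is odd.

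Finally, we assemble these pieces to obtain a $TG_1$-stable Lagrangian $\bfN_1\subset\bfM$, which shows that $(TG_1,\bfM)$ is of cotangent type. We do not actually use the original Lagrangian $\bfN_1'$ beyond guaranteeing, via the criterion, that the orthogonal-type multiplicities of $\bfM$ are compatible. The only nontrivial input is that $\bfM_0$ inherits the parity condition from $\bfM$, and this follows from the orthogonal $T_2$-weight decomposition of the multiplicity spaces.
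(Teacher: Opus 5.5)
Your reduction to $\bfM_0$ is fine (the pieces $\bfM_\chi\oplus\bfM_{-\chi}$ with $\chi\neq0$ are handled correctly), but your treatment of $\bfM_0$ puts the obstruction in the wrong place, and the step that actually matters fails. For a self-dual irreducible $V$ of $G_1$, the invariant form on the isotypic part $V\otimes\operatorname{Mult}_V$ factors as (form on $V$)$\otimes$(form on $\operatorname{Mult}_V$).

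\begin{itemize}
\item If $V$ is orthogonal, $\operatorname{Mult}_V$ is automatically symplectic, hence even-dimensional with a Lagrangian. There is never an obstruction here, so the ``criterion'' you state holds for every symplectic representation.
\item If $V$ is symplectic, $\operatorname{Mult}_V$ is orthogonal. By Schur, every $G_1$-stable subspace of $V\otimes\operatorname{Mult}_V$ has the form $V\otimes B$, and $V\otimes B$ is Lagrangian iff $B$ is a Lagrangian of the orthogonal space $\operatorname{Mult}_V$. So when $\dim\operatorname{Mult}_V$ is odd, no $G_1$-stable Lagrangian exists. A ``further splitting using the symplectic form on $V$'' cannot be $G_1$-stable, because $V$ is irreducible. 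The standard representation $\mathbb{C}^2$ of $SL_2$ is the basic counterexample.
\end{itemize}

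As written, your argument never uses the hypothesis on $(G_1,\bfM)$ in a substantive way. It would therefore show that every symplectic representation of $G_1\times T_2$ is of cotangent type, which is false.

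The fix is the parity argument you already formulated, applied to symplectic-type $V$ instead. Since $\bfM\cong\bfN_1'\oplus(\bfN_1')^*$ as $G_1$-modules, each self-dual $V$ occurs in $\bfM$ with even multiplicity $2m_V(\bfN_1')$. The nondegenerate symmetric form on $\operatorname{Mult}_V(\bfM)$ is $T_2$-invariant, so it pairs the $\chi$ and $-\chi$ weight spaces perfectly. Hence $\dim\operatorname{Mult}_V(\bfM_0)=\dim\operatorname{Mult}_V(\bfM)_0$ is also even, and $V\otimes L$ with $L$ a Lagrangian of this orthogonal space works. With this correction your route gives an explicit construction.

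The paper's proof is much shorter and needs no such bookkeeping. The $G_1$-stable Lagrangians form the closed subvariety $LG(\bfM)^{G_1}$ of the Lagrangian Grassmannian, which is nonempty by hypothesis. $T_2$ acts on it because $T_2$ commutes with $G_1$ and preserves the form. By the Borel fixed point theorem there is a $T_2$-fixed point, i.e.\ a $G_1\times T_2$-stable Lagrangian.
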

\begin{proof}
Let us prove for $i=1$. 
Note that the choices of $\bfN_i$ form a nonempty projective variety. 
Explicitly, it is given by the $G_1$-invariant locus of Lagrangian Grassmannian $LG(\bfM)$ of $\bfM$.

Since the action of $T_2$ commutes with $G_1$ and preserves the symplectic form, it acts on the variety. 
In particular, it must have a $T_2$-fixed point. 
\end{proof}

So we can assume $\bfM = \bfN_i\oplus \bfN_i^*$ for some $TG_i$-representation $\bfN_i$ for $i=1,2$. 
We have the following commutative diagram
$$\xymatrix{
\mathcal{A}_\hbar(TG_2,\bfN_2)
\ar@{<-}[r]^{\iota_{2,*}}& 
\mathcal{A}_\hbar(T,\bfN_2)^W
\ar[r]^-{\subset}& 
\mathcal{A}_\hbar(T,\bfN_2)\ar[d]^{\wr}
\ar[r]^-{z^*}& 
\mathcal{A}_\hbar(T,0)
\ar[r]^-{\subset}& 
\Diff_\hbar(T)\ar[d]^{\wr}_{\sigma}
\\
\mathcal{A}_\hbar(TG_1,\bfN_1)
\ar@{<-}[r]^{\iota_{1,*}}& 
\mathcal{A}_\hbar(T,\bfN_1)^W
\ar[r]^-{\subset}& 
\mathcal{A}_\hbar(T,\bfN_1)
\ar[r]^-{z^*}& 
\mathcal{A}_\hbar(T,0)
\ar[r]^-{\subset}& 
\Diff_\hbar(T),
}$$
where $\sigma$ is the automorphism defined in \eqref{equ:sigma}. 
Let us identify $\mathcal{A}_{\hbar}(TG_i,\bfN_i)$ as subalgebras of $\mathsf{Diff}_\hbar(T)$ via 
\begin{equation}\label{eq:idenassubalgofDiff}
\begin{aligned}
z^*\circ (\iota_{1,*})^{-1}:& 
\mathcal{A}_{\hbar}(TG_1,\bfN_1)
\stackrel{\subset}\longrightarrow 
\mathsf{Diff}_\hbar(T),\\
\sigma\circ z^*\circ (\iota_{2,*})^{-1}:& 
\mathcal{A}_{\hbar}(TG_2,\bfN_2)
\stackrel{\subset}\longrightarrow \mathsf{Diff}_\hbar(T).
\end{aligned}
\end{equation}

Motivated by Theorem \ref{thm:compwithBDFRT} below, we give the following definition.
\begin{definition}\label{def:noncot}
We define the quantized Coulomb branch algebra
$\mathscr{A}_\hbar(G,\bfM)\subset \Diff_\hbar(T)$ to be the intersection of subalgebras
$$
\mathcal{A}_\hbar(TG_1,\bfN_1)_{\operatorname{Frac}H_{T_2}^*(\pt)}
\bigcap
\mathcal{A}_\hbar(TG_2,\bfN_2)_{\operatorname{Frac}H_{T_1}^*(\pt)}.
$$
Here $\operatorname{Frac}H_{T_i}^*(\pt)$ denotes the fraction field of $H_{T_i}^*(\pt)$, and for any subalgebra $\mathcal{A}$ of  $\Diff_\hbar(T)$, we use $\mathcal{A}_{\operatorname{Frac}H_{T_i}^*(\pt)}$ to denote the subalgebra of $\Diff_\hbar(T)$ generated by  $\mathcal{A}$ and $\operatorname{Frac}H_{T_i}^*(\pt)$.
\end{definition}
\begin{remark}
    In this paper, we use the notation $\mathscr{A}_\hbar(G,\bfM)$ to denote the quantized Coulomb branch algebra of non-cotangent type $(G,\bfM)$, while we use $\mathcal{A}_\hbar(G,\bfN)$ to denote the quantized Coulomb branch algebra of cotangent type $(G,\bfN\oplus\bfN^*)$. 
\end{remark}

\begin{remark}
Up to a noncanonical isomorphism, the algebra in Definition~\ref{def:noncot}
is independent of the choices of the maximal tori $T_1,T_2$ and of the compatible Lagrangians $\bfN_1,\bfN_2$. 

Note that maximal tori are all conjugate, thus it suffices to show the independence for Lagranians. 
First we warn readers that the following diagram 
$$
\xymatrix{
\mathcal{A}_\hbar(T,\bfN_2)\ar[r]^{\sim}
\ar[d]^{\wr}
&
\mathcal{A}_\hbar(T,\bfN'_2)\ar[d]^{\wr}\\
\mathcal{A}_\hbar(T,\bfN_1)\ar[r]^{\sim}& \mathcal{A}_\hbar(T,\bfN_1')}
$$
does NOT necessarily commute, if all the arrows are changing Lagrangians, i.e. the restrictions of $\sigma$ in \eqref{equ:sigma}. 

In order to get a commutative diagram, we need to twist further by a group homomorphism $\varphi: X_*(T)=X_*(T_1)\oplus X_*(T_2)\to \{\pm 1\}$, i.e., $d_\lambda\mapsto \varphi(\lambda)d_\lambda$. 
Now the component $\varphi_i:X_*(T_i)\to \{\pm 1\}$ can be lifted to an automorphism of 
$\mathcal{A}_\hbar(TG_{3-i},\bfN_{3-i})$. 
Thus the above diagram commutes after modifying the horizontal arrows. With this, it is easy to see that $\mathscr{A}_\hbar(G,\bfM)$ does not depend on the choices of the Lagrangians. 
\end{remark}

We equip $\Diff_\hbar(T)$ with the usual $W_1$ action, and 
a new action of  $W_2$ by $w(X):=\sigma(w\cdot_{\rm usual}(\sigma^{-1}(X)))$ for $w\in W_2$. 

\begin{lemma}
The new action of $W_2$ commutes with the action of $W_1$. 
\end{lemma}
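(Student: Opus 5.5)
The plan is to make $\sigma$ completely explicit on the basis elements $d_\lambda$ and then compare the two composite actions on generators. From \eqref{equ:sigma}, $\sigma$ is the identity on $\mathbb{C}(\mathfrak{t}\times\mathbb{A}^1)$ and sends
\[
d_\lambda\longmapsto c(\lambda)\,d_\lambda,\qquad
c(\lambda)=\pm\frac{\Eu\bigl(z^\lambda\bfN_{1,\calO}/(z^\lambda\bfN_{1,\calO}\cap\bfN_{1,\calO})\bigr)}{\Eu\bigl(z^\lambda\bfN_{2,\calO}/(z^\lambda\bfN_{2,\calO}\cap\bfN_{2,\calO})\bigr)}\in\mathbb{C}(\mathfrak{t}\times\mathbb{A}^1)^\times .
\]
Since $\sigma$ is an algebra automorphism, $c$ satisfies the cocycle condition $c(\lambda+\mu)=c(\lambda)\cdot\tau_\lambda(c(\mu))$, where $\tau_\lambda$ denotes the shift $t\mapsto t+\lambda\hbar$. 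Consequently, the new action of $w\in W_2$ is the identity twist of the usual one:
\[
w(f)=w\cdot f,\qquad w(d_\lambda)=c(w\lambda)^{-1}\,w\bigl(c(\lambda)\bigr)\,d_{w\lambda}.
\]
Because both actions are by algebra automorphisms, it suffices to check that $uw$ and $wu$ agree on $f\in\mathbb{C}(\mathfrak{t}\times\mathbb{A}^1)$ and on each $d_\lambda$, for $u\in W_1$ and $w\in W_2$.

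On functions the two composites agree immediately, since $W_1$ and $W_2$ commute inside $W=W_1\times W_2$ acting on $\mathfrak{t}$. On $d_\lambda$ we compute both sides:
\[
uw(d_\lambda)=u\bigl(c(w\lambda)^{-1}w(c(\lambda))\bigr)\,d_{uw\lambda},
\qquad
wu(d_\lambda)=c(wu\lambda)^{-1}\,w\bigl(c(u\lambda)\bigr)\,d_{wu\lambda}.
\]
Since $uw\lambda=wu\lambda$, commutativity reduces to the identity
\[
\frac{u(c(\lambda'))}{c(u\lambda')}=\frac{w\bigl(u(c(\lambda))\bigr)}{w\bigl(c(u\lambda)\bigr)},\qquad \lambda'=w\lambda .
\]
This follows once we prove the key claim that $u\bigl(c(\mu)\bigr)=c(u\mu)$ for every $u\in W_1$ and every $\mu$. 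Given the claim, both sides of the identity equal $1$.

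To prove the claim, note first that $\bfN_1$ is $G_1$-stable, so the weight multiset of $\bfN_1$ is $W_1$-invariant, and $u$ therefore sends $\Eu$ for $(\lambda,\bfN_1)$ to $\Eu$ for $(u\lambda,\bfN_1)$, exactly as in the paper's formula $w\Eu(\cdots)=\Eu(z^{w\lambda}\cdots)$. For $\bfN_2$, which is only $T_1\times G_2$-stable, we use \eqref{equ:Euquo} together with $\bfN_2\oplus\bfN_2^*\cong\bfM$. Splitting the weights of $\bfN_2$ into those of $\bfN_1\cap\bfN_2$ and those of the complement $\underline{\bfN}$, the ratio $c(\lambda)$ becomes, up to sign, a product over the weights $\chi$ of $\underline{\bfN}$ (whose negatives lie in $\bfN_1$) of expressions of the form
\[
\prod\bigl(-\chi-\tfrac{(2k-1)\hbar}{2}\bigr)\big/\prod\bigl(\chi-\tfrac{(2k-1)\hbar}{2}\bigr).
\]
By the standard computation in \cite[Section 6(viii)]{BFN18}, each such quotient collapses to $(-1)^{\max(\chi(\lambda),0)}$ or a similar sign. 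Hence, up to the sign $\pm$ already built into $\sigma$, $c(\lambda)$ is a pure sign $\epsilon(\lambda)\in\{\pm1\}$ depending only on the weights of $\bfM$ and on $\lambda$. The claim then amounts to $\epsilon(u\mu)=\epsilon(\mu)$, which holds because the total weight multiset of $\bfM$ is $W$-invariant and the sign is determined by that multiset through the pairing with $\mu$.

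The main obstacle is this last step: keeping track of the sign $\pm=(-1)^{\sum_\chi\max(\chi(\lambda),0)\dim\underline{\bfN}(\chi)}$, whose definition depends on the complement $\underline{\bfN}$, which is not $W_1$-stable. The approach will be to rewrite the exponent modulo $2$ as a quantity depending only on the $W$-invariant weight data. One could use, for example, $\max(a,0)\equiv\max(-a,0)+a \pmod 2$ together with linearity of $\lambda\mapsto\sum_\chi\chi(\lambda)$, which is $W_1$-invariant because it is a character trivial on the derived group. If the sign fails to be $W_1$-invariant on the nose, the fallback is to show that the discrepancy $\epsilon(u\mu)/\epsilon(\mu)$ is a homomorphism in $\mu$ that is $W_2$-invariant, which is exactly what the displayed identity requires.
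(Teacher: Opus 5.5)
\textbf{There is a genuine gap: the key claim is false.} Your reduction to the displayed identity is sound. There is one harmless inversion: unwinding $w(X)=\sigma(w\cdot\sigma^{-1}(X))$ gives $w(d_\lambda)=c(w\lambda)\,w(c(\lambda))^{-1}d_{w\lambda}$, not its reciprocal. The real problem is that $u(c(\mu))=c(u\mu)$ fails, and so does the assertion that $c(\lambda)$ is a pure sign.

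Write $E_i(\lambda)=\Eu\bigl(z^\lambda\bfN_{i,\calO}/(z^\lambda\bfN_{i,\calO}\cap\bfN_{i,\calO})\bigr)$, so that $c(\lambda)=\pm E_1(\lambda)/E_2(\lambda)$. For a weight $\chi$ of $\underline{\bfN}$, the two products in your quotient have lengths $\max(\chi(\lambda),0)$ and $\max(-\chi(\lambda),0)$. One of them is always empty, so the quotient is a genuine polynomial or inverse polynomial. For instance, with $T=\mathbb{C}^\times$, $\bfN_2=\mathbb{C}_\chi$, $\bfN_1=\mathbb{C}_{-\chi}$, one gets $c(1)=\pm(-\chi-\tfrac{\hbar}{2})$. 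The BFN change-of-Lagrangian map is a sign on the monopole basis, $E_2(\lambda)d_\lambda\mapsto\pm E_1(\lambda)d_\lambda$, not on $d_\lambda$.

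Since $u(E_1(\mu))=E_1(u\mu)$, your claim would force $u(E_2(\mu))=\pm E_2(u\mu)$, i.e.\ $W_1$-stability of the weights of $\bfN_2$. This fails in exactly the intended application. Take $G_1=\Sp(\bbV_1)$, $G_2=\SO(\bbV_2)$ with $\dim\bbV_1=2$, $\dim\bbV_2=3$, $\dim\bbW_1=1$. Put $x=x_{1,1}$, $y=x_{2,1}$, write cocharacters in $X_*(T_1)\oplus X_*(T_2)\cong\mathbb{Z}^2$, and let $s_1$ be the nontrivial element of $W_1$.
\begin{itemize}
\item $\bfN_2$ has weights $x+y,x,x-y,x$, and $\underline{\bfN}$ has weights $x-y,x$.
\item For $\mu=(1,0)$, one gets $s_1(c(\mu))=(x+y-\tfrac{\hbar}{2})(x-\tfrac{\hbar}{2})$, while $c(s_1\mu)=\bigl((x-y-\tfrac{\hbar}{2})(x-\tfrac{\hbar}{2})\bigr)^{-1}$.
\item Even the sign part is not $W_1$-invariant. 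With $a(\lambda)=\sum_\chi\max(\chi(\lambda),0)\dim\underline{\bfN}(\chi)$, one has $a(1,1)=1$ but $a(-1,1)=0$.
\end{itemize}

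\textbf{What the identity actually requires, and the missing idea.} Your displayed identity needs $\phi(u,w\lambda)=w(\phi(u,\lambda))$ for $\phi(u,\mu)=u(c(\mu))/c(u\mu)$, not $\phi\equiv 1$. The non-sign parts satisfy this automatically, because $\bfN_1$ is $G_1\times T_2$-stable and $\bfN_2$ is $T_1\times G_2$-stable. The clean way to see this is to work in the basis $E_1(\lambda)d_\lambda$. There $u\in W_1$ acts by pure permutation, and the twisted $w\in W_2$ acts by permutation times $(-1)^{a(\lambda)+a(w\lambda)}$. Commutativity is then equivalent to
\[
a(\lambda)+a(u\lambda)+a(w\lambda)+a(uw\lambda)\equiv 0 \pmod 2 .
\]
What you are missing is how to prove this parity statement. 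The commutator $u^{-1}w^{-1}uw$ is an algebra automorphism of $\Diff_\hbar(T)$ that fixes functions and multiplies $d_\lambda$ by exactly this sign. Hence the sign is a character of $X_*(T)=X_*(T_1)\oplus X_*(T_2)$. On $X_*(T_1)$ one has $w\lambda=\lambda$, and on $X_*(T_2)$ one has $u\lambda=\lambda$. In both cases the four terms cancel in pairs, so the character is trivial.

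Your fallback does not supply this. The two-term discrepancy $(-1)^{a(\mu)+a(u\mu)}$ is not a homomorphism in $\mu$: in the example above it equals $1$ at $(1,0)$ and at $(0,1)$, but $-1$ at $(1,1)$. Only the four-term product is multiplicative, and that comes from the automorphism property of the commutator, not from weight bookkeeping.
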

\begin{proof}
For $w=w_1\in W_1$, we have 
$$w\big(f(t,\hbar)d_\lambda\big)=(wf)(t,\hbar)d_{w\lambda},$$
in particular, the action is characterized by 
$$
\begin{aligned}
&  w \left(f(t,\hbar)\cdot \Eu(z^\lambda\bfN_{1,\calO}/(z^\lambda\bfN_{1,\calO}\cap\bfN_{1,\calO}))d_\lambda
\right)\\
=\,\,& (wf)(t,\hbar)\cdot 
\Eu(z^{w\lambda}\bfN_{1,\calO}/(z^{w\lambda}\bfN_{1,\calO}\cap\bfN_{1,\calO}))d_{w\lambda}
\end{aligned}$$
since $\bfN_1$ is invariant under $w$. 

For $w=w_2\in W_2$, we similarly have 
\begin{align}\label{equ:twistedw2}
&  w\left(f(t,\hbar)\cdot \Eu(z^\lambda\bfN_{1,\calO}/(z^\lambda\bfN_{1,\calO}\cap\bfN_{1,\calO}))d_\lambda
\right)\\
=\,\,&  \pm  (wf)(t,\hbar)\cdot \Eu(z^{w\lambda}\bfN_{1,\calO}/(z^{w\lambda}\bfN_{1,\calO}\cap\bfN_{1,\calO}))d_{w\lambda},\notag
\end{align}
where
$$\pm = (-1)^{a(\lambda)-a(w_2\lambda)},\qquad a(\lambda):=
\sum_\chi\max(\chi(\lambda),0)
\dim \underline{\bfN}(\chi)\bmod 2.$$
Let $w_i\in W_i$ for $i=1,2$. Since the usual actions of $w_1$ and $w_2$ commute, the action of $g=w_1^{-1}w_2^{-1}w_1w_2$ is given by 
$$g(f(t,\hbar)d_\lambda) = (-1)^{a(\lambda)+a(w_1\lambda)+a(w_2\lambda)+a(w_1w_2\lambda)}f(t,\hbar)d_\lambda.$$
Note that $g$ is an algebra homomorphism, and 
$$
g(d_{(\lambda_1,0)}) = d_{(\lambda_1,0)},\qquad 
g(d_{(0,\lambda_2)}) = d_{(0,\lambda_2)}$$
for $\lambda_i\in X_*(T_i)$. 
We conclude that $g$ is the identity, i.e. $w_1w_2=w_2w_1$. 
\end{proof}

We also need the following result.
\begin{theorem}\label{thm:noncotsub}
Under the above identification, for $i=1,2$ we have 
    $$
\mathcal{A}_{\hbar}(TG_i,\bfN_i)
\cap \Diff_\hbar(T)^{W}\subset
\mathscr{A}_{\hbar}(G,\bfM).$$
\end{theorem}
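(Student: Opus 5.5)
The plan is to check the two halves of the intersection in Definition~\ref{def:noncot} separately. By symmetry (with $\sigma$ in place of the identity) it suffices to treat $i=1$. So fix
\[
X\in \mathcal{A}_\hbar(TG_1,\bfN_1)\cap\Diff_\hbar(T)^W,
\]
where $W=W_1\times W_2$ acts on $\Diff_\hbar(T)$ with the usual $W_1$-action and the twisted $W_2$-action. Membership of $X$ in $\mathcal{A}_\hbar(TG_1,\bfN_1)_{\operatorname{Frac}H_{T_2}^*(\pt)}$ is trivial. The whole task is therefore to show
\[
X\in \mathcal{A}_\hbar(TG_2,\bfN_2)_{\operatorname{Frac}H_{T_1}^*(\pt)}.
\]

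\textbf{Step 1: pass from $TG_1$ to the torus.} I first show $X\in \mathcal{A}_\hbar(T,\bfN_1)_{\operatorname{Frac}H_{T_1}^*(\pt)}$, viewed inside $\Diff_\hbar(T)$ via $z^*$. For this I use the localization statement \cite[Lemma 5.9]{BFN18}: $\iota_{1,*}$ becomes an isomorphism once the Weyl denominators of $TG_1=G_1\times T_2$ are inverted. These denominators are products of roots of $G_1$, so they lie in $H_{T_1}^*(\pt)$. Hence every element of $\mathcal{A}_\hbar(TG_1,\bfN_1)$ equals $\iota_{1,*}$ of an element of $\mathcal{A}_\hbar(T,\bfN_1)^{W_1}\otimes\operatorname{Frac}H_{T_1}^*(\pt)$. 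Applying $z^*\circ(\iota_{1,*})^{-1}$ then lands $X$ in $\mathcal{A}_\hbar(T,\bfN_1)_{\operatorname{Frac}H_{T_1}^*(\pt)}$.

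\textbf{Step 2: change Lagrangian.} The torus algebra depends only on $\bfM$ up to $\sigma$. By the commutative diagram preceding \eqref{eq:idenassubalgofDiff}, $\sigma\circ z^*$ identifies $\mathcal{A}_\hbar(T,\bfN_2)$ with $z^*\mathcal{A}_\hbar(T,\bfN_1)$. Since $\sigma$ is the identity on rational functions, Step 1 gives
\[
Y:=\sigma^{-1}(X)\in z^*\mathcal{A}_\hbar(T,\bfN_2)_{\operatorname{Frac}H_{T_1}^*(\pt)}.
\]
By definition of the twisted $W_2$-action, $W_2$-invariance of $X$ is exactly ordinary $W_2$-invariance of $Y$.

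\textbf{Step 3: descend the invariants and push forward.} The group $W_2$ fixes $H_{T_1}^*(\pt)$ pointwise. Write $Y=\sum_k c_k a_k$ with $c_k\in\operatorname{Frac}H_{T_1}^*(\pt)$ and $a_k\in z^*\mathcal{A}_\hbar(T,\bfN_2)$. Averaging over $W_2$ (we are in characteristic $0$) gives $Y=\sum_k c_k\bar a_k$ with $\bar a_k\in z^*\mathcal{A}_\hbar(T,\bfN_2)^{W_2}$. Because $\iota_{2,*}$ maps $\mathcal{A}_\hbar(T,\bfN_2)^{W_2}$ into $\mathcal{A}_\hbar(TG_2,\bfN_2)$, each $\bar a_k$ lies in $z^*(\iota_{2,*})^{-1}\mathcal{A}_\hbar(TG_2,\bfN_2)$. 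Applying $\sigma$ yields
\[
X=\sigma(Y)\in \mathcal{A}_\hbar(TG_2,\bfN_2)_{\operatorname{Frac}H_{T_1}^*(\pt)}
\]
under the identification \eqref{eq:idenassubalgofDiff}, as required.

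The main obstacle is Step 1. One must make sure that recovering $X$ from $\mathcal{A}_\hbar(T,\bfN_1)^{W_1}$ requires inverting only elements of $H_{T_1}^*(\pt)$, and never elements involving $T_2$-parameters. For this I would use the explicit form of the inverse of $\iota_*$ in \cite[Lemmas 5.9--5.10]{BFN18}: it is localization at the $T$-fixed locus of $\Gr_{TG_1}$, and the Euler classes of normal bundles there involve only roots of $G_1$ together with $\hbar$. The $\hbar$-shifted factors such as $\alpha+n\hbar$ also need care. If they appear in denominators, I would instead argue with the Gelfand--Tsetlin module structure: every element of $\mathcal{A}_\hbar(TG_1,\bfN_1)$ is a finite sum of $\iota_{1,*}$-images of $T$-fixed classes with coefficients in $\operatorname{Frac}H^*_{T_1\times\mathbb{C}^\times_\hbar}$. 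This is still absorbed by localization, provided one reads $\operatorname{Frac}H_{T_1}^*(\pt)$ as including $\hbar$, as the statement implicitly requires.
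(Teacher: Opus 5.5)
Your proposal is correct and follows the paper's proof step for step. You pass from $TG_1$ to the torus by \cite[Lemma 5.9]{BFN18}, transport by $\sigma$ (turning twisted $W_2$-invariance into ordinary $W_2$-invariance), and then return to the localized $\mathcal{A}_\hbar(TG_2,\bfN_2)$ via Lemmas 5.9--5.10, where your explicit averaging over $W_2$ just spells out what the paper cites. Your caveat that the localization must allow $\hbar$-shifted denominators such as $\alpha+n\hbar$ (i.e.\ must be read as $\operatorname{Frac}H^*_{T_1\times\mathbb{C}^\times_\hbar}(\pt)$) is well taken, and it applies equally to the paper's own use of Lemma 5.9, which leaves it implicit.
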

\begin{proof}
We prove the case for $i=1$, as the other case is symmetric. First of all,
\[\mathcal{A}_\hbar(TG_1,\bfN_1)
\cap \Diff_\hbar(T)^{W}
\subset
\mathcal{A}_\hbar(TG_1,\bfN_1)
\subset 
\mathcal{A}_\hbar(TG_1,\bfN_1)_{\operatorname{Frac}H_{T_2}^*(\pt)}.\]
By \cite[Lemma 5.9]{BFN18}, we have
\[\mathcal{A}_\hbar(TG_1,\bfN_1)_{\operatorname{Frac} H_{T_1}^*(\pt)}=\mathcal{A}_\hbar(T,\bfN_1)_{\operatorname{Frac} H_{T_1}^*(\pt)}.\]
Therefore,
\begin{align*}
    \mathcal{A}_\hbar(TG_1,\bfN_1)
\cap \Diff_\hbar(T)^{W}
&
\subset \mathcal{A}_\hbar(TG_1,\bfN_1)_{\operatorname{Frac} H_{T_1}^*(\pt)}
\cap \Diff_\hbar(T)^{W}\\
& = \mathcal{A}_\hbar(T,\bfN_1)_{\operatorname{Frac} H_{T_1}^*(\pt)}
\cap \Diff_\hbar(T)^{W}.
\end{align*}
Let $x\in \mathcal{A}_\hbar(TG_1,\bfN_1)
\cap \Diff_\hbar(T)^{W}$. Since $\sigma(\mathcal{A}_\hbar(T,\bfN_2))=\mathcal{A}_\hbar(T,\bfN_1)$, and $x\in \Diff_\hbar(T)$ is $W_2$ invariant, we get that $\sigma^{-1}(x)\in \mathcal{A}_\hbar(T,\bfN_2)_{\operatorname{Frac} H_{T_1}^*(\pt)}$ is $W_2$ invariant under the usual action. Hence, by \cite[Lemma 5.9 and 5.10]{BFN18}, $\sigma^{-1}(x)\in \mathcal{A}_\hbar(TG_2,\bfN_2)_{\operatorname{Frac} H_{T_1}^*(\pt)}$. Therefore, 
\[x\in \mathcal{A}_\hbar(TG_2,\bfN_2)_{\operatorname{Frac} H_{T_1}^*(\pt)}\cap \Diff_\hbar(T)^{W}.\]
This finishes the proof.
\end{proof}

Let $F$ be a reductive group. Assume the $G$-representation $\bfM$ can be extended to a ($G\times F$)-representation and the Lagrangian $\bfN_i$ is chosen to be $F$-invariant. We can similarly define 
$\mathscr{A}_\hbar^F(G,\bfM)\subset H_F^*(\pt)\otimes \Diff_\hbar(T)$ as an intersection, and Theorem \ref{thm:noncotsub} still holds.

\subsection{Review of \cite{BDFRT}}
Let $\rho: G\to \Sp(\bfM)$ be a symplectic representation. 
We say $(G,\bfM)$ satisfies the anomaly cancellation condition if the induced map between the fourth homotopy groups $\rho:\pi_4(G)\to \pi_4(\Sp(\bfM))\cong \{\pm 1\}$ is trivial.

Let $\Bil(G)$ be the abelian group of symmetric $G$-invariant bilinear forms $B:S^2\mathfrak{g}\to \mathbb{C}$ such that 
\begin{itemize}
    \item 
$B(\lambda,\mu)\in \mathbb{Z}$ for all cocharacters $\lambda,\mu$ and
    \item 
$B(\nu,\nu)\in 2\mathbb{Z}$ for all coroots $\nu$. 
\end{itemize}
By {\cite[Proposition 4.1.2]{BDFRT}}, 
the symplectic representation $\rho$ satisfies the anomaly cancellation condition if and only if 
\begin{equation}\label{equ:anomaly}
    \operatorname{Tr}(d\rho(-),d\rho(-))
\in 2\Bil(G).
\end{equation}

Assume $(G,\bfM)$ satisfies the anomaly cancellation condition in \cite{BDFRT}. 
The \emph{Coulomb branch algebra} is defined to be 
$$
\mathscr{A}_{G,\bfM} 
= H^*_{G_\calO}(\Gr_G,\mathcal{R})$$
for some ring object $\mathcal{R}\in D_{G_\calO}(\Gr_G)$. 
The ring object $\mathcal{R}$ is determined by the choice of a super factorization square root $\sqrt{\mathcal{L}}$ over $\Gr_G$ for $\mathcal{L}=\mathcal{L}_{(G,\bfM)}=\rho^*\det$ where
$\rho:\Gr_G\to \Gr_{\Sp(\bfM)}$ is the natural map induced by $\rho:G\to \Sp(\bfM)$ and $\det$ is the determinant line bundle over $\Gr_{\Sp(\bfM)}$. 
By \cite[Prop 4.1.2]{BDFRT}, the anomaly cancellation condition is equivalent to the existence of $\sqrt{\mathcal{L}}$. Actually, by \cite[Section 2]{BDFRT}, the Picard group of isomorphism classes of super factorization line bundles 
$$\operatorname{SPic}_{\rm fact}(\Gr_G)\cong 
\Bil(G),$$
with $\mathcal{L}$ corresponding to $\rho^*\operatorname{Tr}=\operatorname{Tr}(d\rho(-),d\rho(-))$. 
Moreover, by \cite[Section 4.1]{BDFRT}, the square root $\sqrt{\mathcal{L}}$ is unique, up to a non-unique isomorphism, since 
$$\operatorname{Aut}(\sqrt{\mathcal{L}}) =
\operatorname{Hom}(\pi_1(G),\{\pm 1\}).$$
We remark that here the automorphism group is taken with respect to the factorization structure over $\sqrt{\mathcal{L}}$, not only as a line bundle.

In particular, the Coulomb branch algebra $\mathscr{A}_{G,\bfM}$ is defined up to an automorphism of
$$
\operatorname{Hom}(\pi_1(G),\{\pm 1\}).$$
Explicitly, the connected components of $\Gr_G$ can be naturally parametrized by $\pi_1(G)$, 
and the action of $\varphi:\pi_1(G)\to \{\pm 1\}$ is the scalar $\varphi(*)\in \{\pm 1\}$ on the connected component corresponding to $*\in \pi_1(G)$. 
We will write the Coulomb branch algebra as
$\mathscr{A}_{G,\bfM}(\sqrt{\mathcal{L}})$ to emphasize the dependence of $\sqrt{\mathcal{L}}$.

When $\bfM$ is of cotangent type, say $\bfM=\bfN\oplus \bfN^*$, it gives a canonical choice of square root 
$$\sqrt{\mathcal{L}}_{(G,\bfN)}:= \pi^*\det,$$ where $\pi:G\to \GL(\bfN)$ and $\det$ is the determinant bundle over $\Gr_{GL(\bfN)}$. 
By \cite[Lemma 4.2.1]{BDFRT}, we can canonically identify
$$\mathscr{A}_{G,\bfM}(\sqrt{\mathcal{L}}_{(G,\bfN)})\cong \mathcal{A}_{\hbar=0}(G,\bfN).$$

Assume we have two choices of $\bfN$ as in \eqref{eq:twodecomp}. 
By \cite[Corollary 4.3.2]{BDFRT}, at least when $G$ is abelian, there exists an isomorphism $\varphi:\sqrt{\mathcal{L}}_{(G,\bfN_2)}\cong \sqrt{\mathcal{L}}_{(G,\bfN_1)}$ such that 
the isomorphism induced by $\sigma$ in \eqref{equ:sigma} coincides with
$$\mathcal{A}_{\hbar=0}(G,\bfN_2)
\stackrel{\sim}\longrightarrow 
\mathscr{A}_{G,\bfM}(\sqrt{\mathcal{L}}_{(G,\bfN_2)})
\stackrel{\varphi}\longrightarrow 
\mathscr{A}_{G,\bfM}
(\sqrt{\mathcal{L}}_{(G,\bfN_1)})
\stackrel{\sim}\longrightarrow 
\mathcal{A}_{\hbar=0}(G,\bfN_1).
$$

\subsection{Compatibility}
Now return to the separated cotangent setting. 
We will impose the technical assumption that 
\begin{equation}\label{eq:assumesemisimple}
\text{one of the groups $G_1,G_2$ is semisimple}. 
\end{equation}

\begin{theorem}\label{thm:compwithBDFRT}
Assuming \eqref{eq:assumesemisimple}, the Coulomb branch algebra of \cite{BDFRT} is isomorphic to 
$$
\mathcal{A}_{\hbar=0}(TG_1,\bfN_1)_{\operatorname{Frac}H_{T_2}^*(\pt)}
\bigcap
\mathcal{A}_{\hbar=0}
(TG_2,\bfN_2)_{\operatorname{Frac}H_{T_1}^*(\pt)}$$
with both components identified as a subalgebra of $\Diff_{\hbar=0}(T)$ via the $\hbar=0$ version of \eqref{eq:idenassubalgofDiff}. 
\end{theorem}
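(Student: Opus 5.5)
The plan is to realize both sides inside the localized abelian algebra and compare them through the abelianization picture of \cite{BDFRT}. Write $\mathscr{A}_{G,\bfM}=H^*_{G_\calO}(\Gr_G,\mathcal{R})$ for a chosen square root $\sqrt{\mathcal{L}}$. First, I would establish the analogue of \cite[Lemmas 5.9--5.11]{BFN18} for the ring object. Restriction to $\Gr_T$ together with localization gives an embedding
\[
\mathscr{A}_{G,\bfM}\hookrightarrow \mathscr{A}_{T,\bfM}(\sqrt{\mathcal{L}}|_{\Gr_T})_{\operatorname{Frac}H_T^*(\pt)}\subset\Diff_{\hbar=0}(T),
\]
with image equal to the $W$-invariants of the localized abelian algebra. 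This embedding factors through $\mathscr{A}_{TG_i,\bfM}$ for $i=1,2$. Since $(TG_i,\bfM)$ is of cotangent type, \cite[Lemma 4.2.1]{BDFRT} identifies $\mathscr{A}_{TG_i,\bfM}(\sqrt{\mathcal{L}}_{(TG_i,\bfN_i)})$ with $\mathcal{A}_{\hbar=0}(TG_i,\bfN_i)$. The restrictions of $\sqrt{\mathcal{L}}$ and $\sqrt{\mathcal{L}}_{(TG_i,\bfN_i)}$ to $\Gr_{TG_i}$ differ by an element of $\operatorname{Hom}(\pi_1(TG_i),\{\pm1\})$. On $T$, \cite[Corollary 4.3.2]{BDFRT} says that the two abelian identifications differ exactly by $\sigma$ up to such a sign character. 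This explains why the second embedding in \eqref{eq:idenassubalgofDiff} carries $\sigma$.

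Second, I would use assumption \eqref{eq:assumesemisimple} to fix the sign ambiguity. Say $G_2$ is semisimple. Then $\pi_1(G)=\pi_1(G_1)\times\pi_1(G_2)$, and a character of $\pi_1(TG_i)$ restricted from $X_*(T)$ can be absorbed. Concretely, I would choose $\sqrt{\mathcal{L}}$ on $\Gr_G$ to restrict to $\sqrt{\mathcal{L}}_{(TG_1,\bfN_1)}$ on $\Gr_{TG_1}$. This is possible because the discrepancy is a character of $X_*(T)$ trivial on coroots of $G_1$, and every such character extends to $\pi_1(G)$. The restriction to $\Gr_{TG_2}$ then agrees with $\sqrt{\mathcal{L}}_{(TG_2,\bfN_2)}$ up to a character $\varphi$ of $\pi_1(TG_2)=X_*(T_1)\times\pi_1(G_2)$. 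Semisimplicity of $G_2$ makes $\pi_1(G_2)$ finite and coroot-compatible. As in the remark on independence of Lagrangians, the $X_*(T_1)$-part of $\varphi$ lifts to an automorphism of $\mathcal{A}_{\hbar=0}(TG_2,\bfN_2)$ that does not move the image in $\Diff_{\hbar=0}(T)$. Its $\pi_1(G_2)$-part can be matched by modifying $\sqrt{\mathcal{L}}$ by a character of $\pi_1(G)$ that is trivial on $\pi_1(G_1)$. With these choices both images in $\Diff_{\hbar=0}(T)$ are the ones in \eqref{eq:idenassubalgofDiff}, and $\mathscr{A}_{G,\bfM}$ lies in the intersection.

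Third, I would prove the reverse inclusion by a local-to-global argument. An element of the intersection is $W_1$-invariant, being in the first factor after localizing $T_2$, and $W_2$-invariant for the twisted action. It is regular along each root hyperplane $\alpha=0$ with $\alpha$ a root of $G_1$, using the first factor, and likewise for roots of $G_2$ using the second. I would then invoke the BDFRT analogue of the characterization in \cite[Section 5]{BFN18}. Under that characterization, $\mathscr{A}_{G,\bfM}$ equals the set of $W$-invariant elements of the localized abelian algebra which, for each root $\alpha$, lie in the algebra localized away from all hyperplanes except $\alpha=0$. The rank-one Levi for $\alpha$ lies in $TG_1$ or $TG_2$ according to which factor contains $\alpha$. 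So this condition is exactly membership in the corresponding factor of the intersection.

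The main obstacle is this last step. It requires a codimension-one (Hartogs-type) description of $\mathscr{A}_{G,\bfM}$, whereas \cite{BDFRT} only defines it via the ring object. I would first try to get it from flatness/freeness of $\mathscr{A}_{G,\bfM}$ over $H^*_G(\pt)$, together with the localization theorem applied to Levi subgroups of semisimple rank one. In that setting the comparison reduces to the cotangent case already handled by \cite[Lemma 4.2.1]{BDFRT}.
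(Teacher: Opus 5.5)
The genuine gap is in your Step 3. You rest the inclusion ``$\supseteq$'' on a codimension-one description of $\mathscr{A}_{G,\bfM}$ through rank-one Levis, and you do not establish it. As formulated, it also only addresses root hyperplanes. An element $X$ of the intersection may a priori have poles along arbitrary hypersurfaces in the $\mathfrak{t}_2$-directions (from $\operatorname{Frac}H^*_{T_2}(\pt)$) and in the $\mathfrak{t}_1$-directions. These can only be removed by playing the two factors against each other.

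No rank-one analysis is needed, and the Hartogs-type input is already in \cite{BDFRT} as normality \cite[Lemma 4.5.1]{BDFRT}. The paper argues as follows.
Choose nonzero $f_1\in H^*_{T_2}(\pt)$ and $f_2\in H^*_{T_1}(\pt)$ with $f_iX\in\mathcal{A}_{\hbar=0}(TG_i,\bfN_i)$. Replace $f_i$ by $\prod_{w\in W_{3-i}}w(f_i)\in H^*_{G_{3-i}}(\pt)$, so that $f_iX$ is $W$-invariant like $X$. Because $\iota_*$ factors through $\mathscr{A}_{TG_i,\bfM}(\sqrt{\mathcal{L}}|_{TG_i})^W$, this gives $f_iX\in\mathcal{A}_{\hbar=0}(TG_i,\bfN_i)^W\subseteq\mathscr{A}_{G,\bfM}(\sqrt{\mathcal{L}})$. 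Hence $X$ is regular on $\{f_1\neq0\}\cup\{f_2\neq0\}$. The functions $f_1,f_2$ involve disjoint sets of variables, and $\mathscr{A}_{G,\bfM}$ is flat over $H^*_{G_1}(\pt)\otimes H^*_{G_2}(\pt)$, so the complement has codimension $\geq2$. Normality then gives $X\in\mathscr{A}_{G,\bfM}$. Your flatness idea would also work directly: for these coprime $f_1,f_2$, flatness yields $\mathscr{A}_{G,\bfM}=(\mathscr{A}_{G,\bfM})_{f_1}\cap(\mathscr{A}_{G,\bfM})_{f_2}$.

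Step 2 also needs repair, for three reasons.
\begin{enumerate}
\item A character of $X_*(T)$ trivial on the coroots of $G_1$ need not factor through $\pi_1(G)$; it must also kill the coroots of $G_2$.
\item Twisting by an element of $\operatorname{Hom}(\pi_1(G),\{\pm1\})$ precomposes both embeddings of $\mathscr{A}_{G,\bfM}$ with the same automorphism, so it cannot cancel a discrepancy between them.
\item If you freeze the $TG_1$-identification first, the $X_*(T_2)$-part of the discrepancy may be nontrivial on coroots of $G_2$. It then cannot be absorbed on the $TG_2$ side.
\end{enumerate}
The working mechanism, as in the paper, is that the $\sigma$-compatible isomorphism $\varphi$ over $\Gr_T$ splits as $\varphi_1\boxtimes\varphi_2$. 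You build $\varphi_2$ into the $TG_1$-identification (via $\operatorname{id}\boxtimes\varphi_2$) and $\varphi_1$ into the $TG_2$-identification (via $\varphi_1^{-1}\boxtimes\operatorname{id}$). This is legitimate because $T_2$ is a torus factor of $TG_1$ and $T_1$ is one of $TG_2$.

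In the paper, semisimplicity enters through $\Bil(G_1\times G_2)=\Bil(G_1)\oplus\Bil(G_2)$. That decomposition is what allows $\sqrt{\mathcal{L}}=\sqrt{\mathcal{L}}_{(G_1,\bfN_1)}\boxtimes\sqrt{\mathcal{L}}_{(G_2,\bfN_2)}$. Finiteness of $\pi_1(G_2)$ plays no role.
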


We need some preparations to prove this theorem.

\begin{lemma}\label{lem:factorLB}
As super factorization line bundles, 
we have 
$$\begin{aligned}
\mathcal{L}_{(G,\bfM)}& \cong \mathcal{L}_{(G_1,\bfM)}\boxtimes \mathcal{L}_{(G_2,\bfM)}
& \text{over $\Gr_G=\Gr_{G_1}\times \Gr_{G_2}$,}\\
\sqrt{\mathcal{L}}_{(TG_1,\bfN_1)}& \cong \sqrt{\mathcal{L}}_{(G_1,\bfN_1)}\boxtimes \sqrt{\mathcal{L}}_{(T_2,\bfN_1)}
& \text{over $\Gr_{TG_1}=\Gr_{G_1}\times \Gr_{T_2}$,}\\
\sqrt{\mathcal{L}}_{(TG_2,\bfN_2)}& \cong\sqrt{\mathcal{L}}_{(T_1,\bfN_2)}\boxtimes \sqrt{\mathcal{L}}_{(G_2,\bfN_2)}
& \text{over $\Gr_{TG_2}=\Gr_{T_1}\times \Gr_{G_2}$.}
\end{aligned}$$
Moreover, the isomorphism above is unique, 
if we assume it is identical when restricting to each factor. We will write ``$\cong$'' as ``$=$'' later. 
\end{lemma}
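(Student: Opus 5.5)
The plan is to use the classification $\operatorname{SPic}_{\rm fact}(\Gr_H)\cong\Bil(H)$ from \cite[Section 2]{BDFRT}. I would apply it for $H=G$, $TG_1$, $TG_2$, together with its compatibility with products $\Gr_{H_1\times H_2}=\Gr_{H_1}\times\Gr_{H_2}$. Under this classification, $\mathcal{L}_{(G,\bfM)}$ corresponds to $\rho^*\operatorname{Tr}=\operatorname{Tr}(d\rho(-),d\rho(-))$. Likewise, $\sqrt{\mathcal{L}}_{(H,\bfN)}=\pi^*\det$ corresponds to $\pi^*\operatorname{Tr}_{\bfN}$, the trace form of $\bfN$ viewed as a representation of $H$. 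Since $\operatorname{Tr}_{\bfM}=2\operatorname{Tr}_{\bfN}$ for $\bfM=\bfN\oplus\bfN^*$, this matches $\mathcal{L}=(\sqrt{\mathcal{L}})^{\otimes 2}$.

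Each isomorphism in the statement is then a statement about bilinear forms. The key point is that the cross terms vanish: $\Bil(H_1\times H_2)$ splits as $\Bil(H_1)\oplus\Bil(H_2)$ whenever the mixed pairing vanishes. For the first isomorphism, the form $\operatorname{Tr}_{\bfM}(X_1,X_2)$ with $X_i\in\mathfrak{g}_i$ must be shown to vanish. I would argue this as follows. Because $\mathfrak{g}_1$ and $\mathfrak{g}_2$ commute, the form $B(X_1,X_2)=\operatorname{Tr}_{\bfM}(X_1X_2)$ is $G$-invariant. Restricting $\bfM$ to $G_1$ gives $\bfN_1\oplus\bfN_1^*$ with $\bfN_1$ a $TG_1$-representation, so on each $T_2$-weight space $\chi$ the operator $X_2\in\mathfrak t_2$ acts by the scalar $\chi(X_2)$. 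The dual summand then contributes $-\chi(X_2)$ against the same trace of $X_1$, so
\[
\operatorname{Tr}_{\bfM}(X_1X_2)=\sum_\chi \chi(X_2)\bigl(\operatorname{Tr}_{\bfN_1(\chi)}(X_1)-\operatorname{Tr}_{\bfN_1(\chi)}(X_1)\bigr)=0 .
\]
By invariance this extends from $\mathfrak t_2$ to all of $\mathfrak{g}_2$. The vanishing of $\operatorname{Tr}_{\bfM}$ on $\mathfrak{g}_1\otimes\mathfrak{g}_2$ gives the first isomorphism.

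The second and third isomorphisms require the mixed term $\operatorname{Tr}_{\bfN_1}$ on $\mathfrak{g}_1\otimes\mathfrak t_2$ to vanish. This is where I expect the main subtlety, since for a torus factor the cross term need not vanish a priori. I would handle it by $G_1$-invariance. For fixed $X_2\in\mathfrak t_2$, the functional $X_1\mapsto\operatorname{Tr}_{\bfN_1}(X_1X_2)$ is $\operatorname{Ad}$-invariant on $\mathfrak{g}_1$, so it kills $[\mathfrak{g}_1,\mathfrak{g}_1]$ and hence factors through the center. If $G_1$ is semisimple, it therefore vanishes. For nonsemisimple $G_1$ I would instead use that the mixed term lies in $\Hom(\pi_1(G_1)\otimes X_*(T_2),\mathbb Z)$, so that the decomposition still holds as factorization line bundles after possibly modifying by a character. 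If necessary, I would invoke the standing assumption \eqref{eq:assumesemisimple} or the explicit description of $\pi^*\det$ via the $\det$ of $\bfN_1=\bigoplus_\chi\bfN_1(\chi)$: the determinant line bundle of a direct sum is the tensor product of determinants, and each $\det\bfN_1(\chi)$ pulls back as the external product of the $G_1$-part and the $T_2$-character part, which gives the decomposition concretely.

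For uniqueness, I would note that an automorphism of a super factorization line bundle on $\Gr_{H_1}\times\Gr_{H_2}$ is an element of $\operatorname{Hom}(\pi_1(H_1)\times\pi_1(H_2),\{\pm1\})=\operatorname{Hom}(\pi_1(H_1),\{\pm1\})\times\operatorname{Hom}(\pi_1(H_2),\{\pm1\})$. Such an element is determined by its restrictions to $\Gr_{H_1}\times\{e\}$ and $\{e\}\times\Gr_{H_2}$. Hence two isomorphisms that agree on each factor differ by the trivial automorphism.
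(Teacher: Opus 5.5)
There is a genuine gap, and it begins with the first isomorphism. Your computation of the mixed term has a sign error. On $\bfN_1(\chi)^*$ the Lie algebra acts by negative transposes, so $X_2$ acts by $-\chi(X_2)$ \emph{and} $X_1$ has trace $-\operatorname{Tr}_{\bfN_1(\chi)}(X_1)$. The two signs cancel, giving
\[
\operatorname{Tr}_{\bfM}(X_1X_2)=2\sum_\chi \chi(X_2)\operatorname{Tr}_{\bfN_1(\chi)}(X_1)=2\operatorname{Tr}_{\bfN_1}(X_1X_2).
\]
This is just the identity $\operatorname{Tr}_{\bfM}=2\operatorname{Tr}_{\bfN_1}$ that you quote yourself, so your two claims are inconsistent. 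This mixed term does not vanish in general. Take $G_1=G_2=\mathbb{C}^\times$ acting on $\bfM=\mathbb{C}_{(1,1)}\oplus\mathbb{C}_{(-1,-1)}$, which is of cotangent type for each factor. The trace form is $2(a_1+a_2)(b_1+b_2)$, which has a nonzero cross term, so $\mathcal{L}_{(G,\bfM)}\not\cong\mathcal{L}_{(G_1,\bfM)}\boxtimes\mathcal{L}_{(G_2,\bfM)}$. The first isomorphism therefore genuinely needs \eqref{eq:assumesemisimple}, and this is exactly where the paper uses it. If, say, $G_1$ is semisimple, apply your own $\operatorname{Ad}$-invariance argument to $X_1\mapsto\operatorname{Tr}_{\bfM}(X_1X_2)$ with $X_2\in\mathfrak{g}_2$. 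It kills the mixed term, so $\Bil(G_1\times G_2)=\Bil(G_1)\oplus\Bil(G_2)$, and the same reasoning gives the second isomorphism.

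The third isomorphism is the remaining gap. With only $G_1$ semisimple, the relevant mixed term of $\operatorname{Tr}_{\bfN_2}$ lives on $\mathfrak{t}_1\otimes\mathfrak{g}_2$, where neither factor is semisimple, so invariance gives nothing. Your fallbacks do not repair this, for two reasons:
\begin{enumerate}
\item Since $\operatorname{SPic}_{\rm fact}(\Gr_H)\cong\Bil(H)$ is a group isomorphism, a nonzero mixed term means the two sides are non-isomorphic. Elements of $\operatorname{Hom}(\pi_1,\{\pm1\})$ are automorphisms, so they change only the choice of isomorphism, never the isomorphism class.
\item The pullback of $\det$ along $(t,g)\mapsto\chi(t)\rho(g)$ is not an external product. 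Its bilinear form contains the symmetrized cross term $\chi(X_t)\operatorname{Tr}\rho(Y_g)+\chi(Y_t)\operatorname{Tr}\rho(X_g)$.
\end{enumerate}
The missing idea is to transfer the vanishing from $\bfM$. On $\mathfrak{t}_1\oplus\mathfrak{g}_2$ one has $\operatorname{Tr}_{\bfM}=2\operatorname{Tr}_{\bfN_2}$. The mixed term of $\operatorname{Tr}_{\bfM}$ on $\mathfrak{t}_1\otimes\mathfrak{g}_2\subset\mathfrak{g}_1\otimes\mathfrak{g}_2$ is zero by the first isomorphism, hence so is that of $\operatorname{Tr}_{\bfN_2}$. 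The paper phrases this as follows: $\sqrt{\mathcal{L}}_{(T_1,\bfN_2)}\boxtimes\sqrt{\mathcal{L}}_{(G_2,\bfN_2)}$ is a square root of $\mathcal{L}_{(TG_2,\bfM)}$ by the first isomorphism, and square roots are unique up to isomorphism because $\Bil$ is torsion-free. Your uniqueness argument via $\operatorname{Hom}(\pi_1(H_1)\times\pi_1(H_2),\{\pm1\})$ is fine and matches the paper's.
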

\begin{proof}
Recall the natural isomorphism used in 
\cite[Section 2]{BDFRT}, 
$$\operatorname{SPic}_{\rm fact}(\Gr_G)\cong 
\Bil(G).$$ Without loss of generality, assume $G_1$ is semisimple. 
Note that a semisimple group does not have any linear invariants, so it is easy to see 
$$\Bil(G_1\times G_2)
\cong \Bil(G_1)\oplus \Bil(G_2),
$$
The above isomorphism implies 
$\rho^*\det \cong \rho_1^*\det\boxtimes \rho_2^*\det$, for $\rho_i:G_i\to G\to \Sp(\bfM)$ i.e. 
$\mathcal{L}_{(G,\bfM)}
\cong \mathcal{L}_{(G_1,\bfM)}\boxtimes \mathcal{L}_{(G_2,\bfM)}$. 
The isomorphism is determined up to some group homomorphism $\pi_1(G)=\pi_1(G_1)\times \pi_1(G_2)\to \{\pm 1\}$. In particular, the isomorphism is determined by the restriction to $\Gr_{G_1}=\Gr_{G_1\times \{1\}}$ and $\Gr_{G_2}=\Gr_{\{1\}\times G_2}$. 

Similarly for the second isomorphism, as 
$$\Bil(G_1\times T_2)
\cong \Bil(G_1)\oplus \Bil(T_2).$$
Let us prove the last isomorphism. 
Note that the right-hand side 
$$\sqrt{\mathcal{L}}_{(T_1,\bfN_2)}\boxtimes \sqrt{\mathcal{L}}_{(G_2,\bfN_2)}$$
is already a square root of $\mathcal{L}_{(TG_2,\bfM)}$
by the first isomorphism. 
Since $\Bil(G)$ is torsion-free, it has to be isomorphic to the left-hand side. 
\end{proof}

By Lemma \ref{lem:factorLB}, we can take a square root 
$$\sqrt{\mathcal{L}}:=
\sqrt{\mathcal{L}}_{(G_1,\bfN_1)}\boxtimes 
\sqrt{\mathcal{L}}_{(G_2,\bfN_2)}.$$
For any subgroup $H$ of $G$, we write $-|_H$ the restriction from $\Gr_G$ to $\Gr_H$. 

Let us fix an isomorphism 
$$\varphi:\sqrt{\mathcal{L}}_{(T,\bfN_2)}
\stackrel{\sim}\longrightarrow
\sqrt{\mathcal{L}}_{(T,\bfN_1)}$$
such that it is compatible with the isomorphism $\sigma$ in \eqref{equ:sigma}. 

\begin{lemma}
Write $\varphi_i=\varphi|_{T_i}$. 
We have $\varphi=\varphi_1\boxtimes \varphi_2$. 
\end{lemma}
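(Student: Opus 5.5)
The plan is to reduce the claim to the structure of automorphisms of super factorization line bundles on $\Gr_T=\Gr_{T_1}\times\Gr_{T_2}$, in the same way as in the proof of Lemma~\ref{lem:factorLB}.

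First I will show that, for $i=1,2$, the torus square roots split:
$$\sqrt{\mathcal{L}}_{(T,\bfN_i)}=\sqrt{\mathcal{L}}_{(T_1,\bfN_i)}\boxtimes\sqrt{\mathcal{L}}_{(T_2,\bfN_i)}.$$
This is not automatic, since $\Bil(T_1\times T_2)$ can contain cross terms pairing $\mathfrak{t}_1$ with $\mathfrak{t}_2$. Here the assumption \eqref{eq:assumesemisimple} is used; say $G_1$ is semisimple.
\begin{itemize}
\item The class of $\mathcal{L}_{(T,\bfM)}$ is $\operatorname{Tr}(d\rho(-),d\rho(-))$ restricted to $\mathfrak{t}$.
\item For fixed $y\in\mathfrak{t}_2$, the linear form $x\mapsto\operatorname{Tr}(d\rho(x)d\rho(y))$ on $\mathfrak{g}_1$ is $G_1$-invariant, because $G_2$ commutes with $G_1$. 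Since $G_1$ is semisimple, this form vanishes.
\item Hence the cross term of $\rho^*\operatorname{Tr}$ on $\mathfrak{t}_1\times\mathfrak{t}_2$ is zero.
\end{itemize}
Each $\sqrt{\mathcal{L}}_{(T,\bfN_i)}$ corresponds to half of this form, because $\Bil$ is torsion-free. Its class is therefore the sum of its restrictions to $T_1$ and $T_2$, and $\operatorname{SPic}_{\rm fact}\cong\Bil$ gives the splitting. As in Lemma~\ref{lem:factorLB}, I normalize the splitting to be the identity on $\Gr_{T_1}=\Gr_{T_1\times\{1\}}$ and on $\Gr_{T_2}$.

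Next, consider $\varphi_1\boxtimes\varphi_2$. It is an isomorphism $\sqrt{\mathcal{L}}_{(T,\bfN_2)}\to\sqrt{\mathcal{L}}_{(T,\bfN_1)}$ of super factorization line bundles, via the splittings above. So $\varphi\circ(\varphi_1\boxtimes\varphi_2)^{-1}$ is a factorization automorphism of $\sqrt{\mathcal{L}}_{(T,\bfN_1)}$. By \cite[Section 4.1]{BDFRT}, such automorphisms form
$$\operatorname{Hom}(\pi_1(T),\{\pm1\})=\operatorname{Hom}(X_*(T_1)\oplus X_*(T_2),\{\pm1\}).$$
Each one acts on the component of $\Gr_T$ indexed by $(\lambda_1,\lambda_2)$ by the scalar $\chi_1(\lambda_1)\chi_2(\lambda_2)$.

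Finally, I restrict to $\Gr_{T_1}=\Gr_{T_1\times\{1\}}$, where $\lambda_2=0$. There $\varphi$ restricts to $\varphi_1$ by definition, and $\varphi_1\boxtimes\varphi_2$ also restricts to $\varphi_1$, by the normalization of the splittings. Hence $\chi_1$ is trivial, and symmetrically $\chi_2$ is trivial. So the automorphism is the identity and $\varphi=\varphi_1\boxtimes\varphi_2$.

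The main obstacle is the first step. One must check that the normalized splittings for $\bfN_1$ and $\bfN_2$ really are compatible with restriction to $\Gr_{T_j}$, so that both restrictions of $\varphi_1\boxtimes\varphi_2$ are computed correctly. I would handle this exactly as in the uniqueness part of Lemma~\ref{lem:factorLB}: an isomorphism between the two sides is unique once its restrictions to both factors are fixed.
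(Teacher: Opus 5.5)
Your proposal is correct and follows the paper's argument. In both, $\varphi$ and $\varphi_1\boxtimes\varphi_2$ differ by a character of $\pi_1(T)=\pi_1(T_1)\times\pi_1(T_2)$, and this character is trivial because the two isomorphisms agree on $\Gr_{T_1}\times\{1\}$ and on $\{1\}\times\Gr_{T_2}$. Your first step, splitting the torus square roots by showing the cross term vanishes when $G_1$ is semisimple, is left implicit in the paper; there it follows by restricting the isomorphisms of Lemma~\ref{lem:factorLB} to $\Gr_T$.
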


\begin{proof}
The isomorphisms $\varphi$ and 
$\varphi_1\boxtimes \varphi_2$ between square roots differ by the action of some group homomorphism $\pi_1(T)=\pi_1(T_1)\times \pi_1(T_2)\to \{\pm 1\}$. 
When restricting to 
$\Gr_{T_1}\times \{1\}$ and 
$\{1\}\times \Gr_{T_2}$, the two isomorphisms $\varphi$ and $\varphi_1\boxtimes \varphi_2$ coincide. 
In particular, the group homomorphism is trivial. 
\end{proof}

By Lemma \ref{lem:factorLB}, we get 
$$
\begin{aligned}
\operatorname{id}\boxtimes \varphi_2: \quad
\sqrt{\mathcal{L}}|_{TG_1}
& =\sqrt{\mathcal{L}}_{(G_1,\bfN_1)}\boxtimes 
\sqrt{\mathcal{L}}_{(G_2,\bfN_2)}|_{T_2}\\
& =\sqrt{\mathcal{L}}_{(G_1,\bfN_1)}\boxtimes 
\sqrt{\mathcal{L}}_{(T_2,\bfN_2)}\\
& \to 
\sqrt{\mathcal{L}}_{(G_1,\bfN_1)}\boxtimes 
\sqrt{\mathcal{L}}_{(T_2,\bfN_1)}
= \sqrt{\mathcal{L}}_{(TG_1,\bfN_1)},\\
\varphi_1^{-1}\boxtimes \operatorname{id}: \quad
\sqrt{\mathcal{L}}|_{TG_2}
& = 
\sqrt{\mathcal{L}}_{(G_1,\bfN_1)}|_{T_1}\boxtimes 
\sqrt{\mathcal{L}}_{(G_2,\bfN_2)}\\
& = 
\sqrt{\mathcal{L}}_{(T_1,\bfN_1)}\boxtimes 
\sqrt{\mathcal{L}}_{(G_2,\bfN_2)}\\
& \to 
\sqrt{\mathcal{L}}_{(T_1,\bfN_2)}\boxtimes 
\sqrt{\mathcal{L}}_{(G_2,\bfN_2)}
= \sqrt{\mathcal{L}}_{(TG_2,\bfN_2)}.
\end{aligned}
$$
\begin{prop}
Let $L$ be a Levi subgroup of $G$. We have natural maps
$$
\mathscr{A}_{(G,\bfM)}(\sqrt{\mathcal{L}})
\stackrel{\iota_*}\longleftarrow 
\mathscr{A}_{(L,\bfM)}(\sqrt{\mathcal{L}}|_L)^W
\stackrel{\subset}\longrightarrow 
\mathscr{A}_{(L,\bfM)}(\sqrt{\mathcal{L}}|_L).
$$
The first map is an isomorphism after localization. 
\end{prop}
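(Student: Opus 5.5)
This is the analogue, for the ring objects $\mathcal{R}$ of \cite{BDFRT}, of \cite[Lemmas 5.9 and 5.10]{BFN18}. We plan to follow the BFN argument, replacing $\calR_{G,\bfN}$ by the sheaf $\mathcal{R}(\sqrt{\mathcal{L}})\in D_{G_\calO}(\Gr_G)$.

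\emph{Step 1: restriction of the ring object.} Let $i:\Gr_L\to\Gr_G$ be the inclusion. Restricting the twist along $\Gr_L\to\Gr_G$ gives the super factorization square root $\sqrt{\mathcal{L}}|_L$ of $\mathcal{L}_{(L,\bfM)}$. By the functoriality of the construction in \cite{BDFRT}, which is compatible with restriction to subgroups, we expect a natural map of ring objects $\mathcal{R}_L(\sqrt{\mathcal{L}}|_L)\to i^!\mathcal{R}_G(\sqrt{\mathcal{L}})$. By adjunction this induces $i_*\mathcal{R}_L\to\mathcal{R}_G$. Taking $L_\calO$-equivariant cohomology, identifying $H^*_{G_\calO}(\Gr_G,-)$ with $\bigl(H^*_{L_\calO}(\Gr_G,-)\bigr)^{W_L}$ up to the standard $W$-symmetrization, and averaging gives the pushforward $\iota_*$ from $W$-invariants. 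The second map is the obvious inclusion of $W$-invariants. For the case we need, $L=T$ or $L=TG_i$, it suffices to construct these maps torus-equivariantly and check that $W$ acts on $\mathscr{A}_{(L,\bfM)}(\sqrt{\mathcal{L}}|_L)$ through $N_G(L)/L$. We will use the rigidity of square roots, $\operatorname{Aut}=\Hom(\pi_1,\{\pm1\})$, to see that the $W$-action on $\sqrt{\mathcal{L}}|_L$ is well defined once the factorization structure is fixed.

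\emph{Step 2: multiplicativity of $\iota_*$.} Both algebras are defined by convolution, and $i_*$ is compatible with convolution diagrams because $\Gr_L\to\Gr_G$ is compatible with the convolution Grassmannians. Restricted to $W$-invariants, $\iota_*$ is then an algebra map, exactly as in \cite[Lemma 5.10]{BFN18}.

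\emph{Step 3: isomorphism after localization.} Localize over $\operatorname{Frac}H_T^*(\pt)$. By equivariant localization, both sides are computed from the $T$-fixed points $\Gr_G^T=\Gr_T$ and the stalks of the ring objects there. Along $\Gr_T$ the restricted twists agree, since they are determined by $\sqrt{\mathcal{L}}|_T$. So both localized algebras become $\mathscr{A}_{(T,\bfM)}(\sqrt{\mathcal{L}}|_T)_{\mathrm{loc}}$, with $W$-invariants taken on the $L$-side. The fixed-point decomposition of $\Gr_G$ under $T$ indexes $W$-orbits compatibly, which gives the isomorphism, as in \cite[Lemma 5.9]{BFN18}.

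The main obstacle is Step 1. We must show that restriction of the BDFRT ring object to a Levi subgroup is well defined and compatible with the chosen factorization square root, including signs. Since \cite{BDFRT} gives $\mathcal{R}$ only up to $\Hom(\pi_1(G),\{\pm1\})$, we would first try to pin everything down using the uniqueness in Lemma~\ref{lem:factorLB}. Then we would check the construction on the abelian case, where \cite[Corollary 4.3.2]{BDFRT} applies.
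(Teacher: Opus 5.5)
The mechanism you use is the one the paper intends: restrict the ring object along $i\colon\Gr_L\hookrightarrow\Gr_G$, use the counit $i_*i^!\mathcal{R}_G\to\mathcal{R}_G$, then localize. The paper's proof only points to the torus case in \cite[Remark 4.1.5]{BDFRT} and says the argument extends to Levi subgroups. As written, however, your argument has two gaps.

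\textbf{Step 3 localizes with respect to the wrong torus.} You use the full maximal torus, via $\Gr_G^T=\Gr_T$, and compare both sides with $\mathscr{A}_{(T,\bfM)}(\sqrt{\mathcal{L}}|_T)$. For a non-toral $L$ this only shows that $\iota_*$ is an isomorphism after inverting all of $H^*_T(\pt)\setminus\{0\}$. That already follows from the torus case applied to $G$ and to $L$.

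The statement is used in a stronger form. In the proof of Theorem~\ref{thm:compwithBDFRT}, with $L=TG_1=G_1\times T_2$, the paper needs $\iota_*$ to be an isomorphism after tensoring with $\operatorname{Frac}H^*_{T_2}(\pt)$ only. That is what yields $\mathscr{A}_{G,\bfM}(\sqrt{\mathcal{L}})\subset\mathcal{A}_{\hbar=0}(TG_1,\bfN_1)_{\operatorname{Frac}H^*_{T_2}(\pt)}$. Your version only places $\mathscr{A}_{G,\bfM}(\sqrt{\mathcal{L}})$ inside the torus algebra localized at all of $H^*_T(\pt)$, which carries no information.

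The stronger statement cannot be extracted from yours. A cokernel that is $H^*_T(\pt)$-torsion may be supported on $\{\alpha=0\}$ for a root $\alpha$ of $G_1$, and such torsion survives inverting $H^*_{T_2}(\pt)$.

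The fix is to localize with respect to a central torus $Z\subset L$ with $\Gr_G^Z=\Gr_L$. You can take the connected center, or simply $T_2$ when $L=TG_1$, since $\Gr_G^{T_2}=\Gr_{G_1}\times\Gr_{T_2}$. The cone of $H^*_{L_\calO}(\Gr_L,i^!\mathcal{R}_G)\to H^*_{L_\calO}(\Gr_G,\mathcal{R}_G)$ is computed on $\Gr_G\setminus\Gr_L$, where $Z$ has no fixed points. So it is $H^*_Z(\pt)$-torsion on each finite-type piece. Then take $N_G(L)/L$-invariants. For $L=TG_i$ this group is $W_{3-i}$, and $H^*_{G_\calO}=(H^*_{L_\calO})^{W_{3-i}}$. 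Your identification with $(H^*_{L_\calO})^{W_L}$ plus averaging is not right, since $W_L$ acts trivially on $L_\calO$-equivariant cohomology.

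\textbf{Step 1 is the real content and is left unproved.} The direct argument requires an isomorphism, not just a map, $i^!\mathcal{R}_G(\sqrt{\mathcal{L}})\cong\mathcal{R}_L(\sqrt{\mathcal{L}}|_L)$ of ring objects over all of $\Gr_L$. It must be compatible with the $N_G(L)$-equivariance inherited from $G_\calO$. That equivariance is also the source of the $W$-action on $\mathscr{A}_{(L,\bfM)}(\sqrt{\mathcal{L}}|_L)$. Rigidity of square roots cannot determine this action, because it only fixes each $w^*\sqrt{\mathcal{L}}|_L\cong\sqrt{\mathcal{L}}|_L$ up to $\Hom(\pi_1(L),\{\pm1\})$.

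This isomorphism is the analogue of $\calR_{G,\bfN}\times_{\Gr_G}\Gr_L=\calR_{L,\bfN}$ in the cotangent case. Your Step 1 only expects a natural map, and the plan you sketch does not reach the isomorphism. Lemma~\ref{lem:factorLB} concerns splitting $\mathcal{L}$ and its square roots over a product $G_1\times G_2$. \cite[Corollary 4.3.2]{BDFRT} concerns changing the Lagrangian for abelian groups. Neither relates $\mathcal{R}_G$ to $\mathcal{R}_L$.

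What you need is that the construction of $\mathcal{R}$ in \cite{BDFRT} commutes with $!$-restriction along $\Gr_L\to\Gr_G$. This is what underlies \cite[Remark 4.1.5]{BDFRT} for $L=T$. There is nothing to match on the twist side, since $\sqrt{\mathcal{L}}|_L$ is by definition the restriction. Step 2 (multiplicativity) is not part of the statement and can be dropped.
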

\begin{proof}
This was done in \cite[Remark 4.1.5]{BDFRT} for torus case, but the argument can be generalized to any Levi subgroup. 
See also \cite[Proof of Lemma 4.4.1]{BDFRT}. 
\end{proof}

Let us denote by $\xymatrix@!=0.5pc{A\ar@{..>}[r]&B}$ if there is map defined after localization.
The above Proposition gives 
$$\xymatrix{\mathscr{A}_{(G,\bfM)}(\sqrt{\mathcal{L}})\ar@{..>}[r]^{(\iota_*)^{-1}} & 
\mathscr{A}_{(L,\bfM)}(\sqrt{\mathcal{L}}|_L).
}$$
We have the following commutative diagram
$$
\xymatrix@R=1.5pc@!C=4.5pc{
&&\mathcal{A}_{\hbar=0}(TG_2,\bfN_2)\ar@{..>}[dr]^{(\iota_{2,*})^{-1}}\\
&\mathscr{A}_{TG_2,\bfM}(\sqrt{\mathcal{L}}|_{TG_2})
\ar@{..>}[dr]
\ar[ur]^{\varphi_1^{-1}\boxtimes \operatorname{id}}&&
\mathcal{A}_{\hbar=0}(T,\bfN_2)\ar[dd]^{\varphi}
\ar[r]^-{z^*}&
\mathcal{A}_{\hbar=0}(T,0)
\ar[r]^-{\subset}&
\mathsf{Diff}_{\hbar=0}(T)\ar[dd]^{\sigma}
\\
\mathscr{A}_{G,\bfM}(\sqrt{\mathcal{L}})\ar@{..>}[dr]\ar@{..>}[ur]\ar@{..>}[rr]^{(\iota_*)^{-1}}& 
&
\mathscr{A}_{T,\bfM}(\sqrt{\mathcal{L}}|_T)
\ar[dr]_{\operatorname{id}\boxtimes \varphi_2}
\ar[ur]^{\varphi_1^{-1}\boxtimes \operatorname{id}}
&\\
&\mathscr{A}_{TG_1,\bfM}(\sqrt{\mathcal{L}}|_{TG_1})\ar@{..>}[ur]
\ar[dr]_{\operatorname{id}\boxtimes \varphi_2}&&
\mathcal{A}_{\hbar=0}(T,\bfN_1)
\ar[r]^-{z^*}&
\mathcal{A}_{\hbar=0}(T,0)
\ar[r]^-{\subset}&
\mathsf{Diff}_{\hbar=0}(T).\\
&&\mathcal{A}_{\hbar=0}(TG_1,\bfN_1)\ar@{..>}[ur]_{(\iota_{1,*})^{-1}}
}$$
Let us carry out the computation in the difference algebra $\Diff_{\hbar=0}(T)$, located in the lower-right corner of the diagram above.

Finally, we can prove Theorem \ref{thm:compwithBDFRT}.
\begin{proof}[Proof of Theorem \ref{thm:compwithBDFRT}]
Let us identify $\mathscr{A}_{G,\bfM}(\sqrt{\mathcal{L}})$ as a subalgebra of $\mathsf{Diff}_{\hbar=0}(T)$ via 
$$z^*\circ (\iota_{1,*})^{-1}\circ (\operatorname{id}\boxtimes \varphi_2) \circ (\iota_{*})^{-1}:
\mathscr{A}_{G,\bfM}(\sqrt{\mathcal{L}})
\stackrel{\subset}\longrightarrow \mathsf{Diff}_{\hbar=0}(T).$$
We leave it to the reader to check this map is $W$-invariant. 
Note that the map 
$$
\mathscr{A}_{G,\bfM}(\sqrt{\mathcal{L}})
\longleftarrow
\mathscr{A}_{TG_1,\bfM}(\sqrt{\mathcal{L}})^W
$$
is an isomorphism after tensoring $\operatorname{Frac}H_{T_2}^*(\pt)$. 
So we have 
$$\mathscr{A}_{G,\bfM}(\sqrt{\mathcal{L}})
\subset \mathcal{A}_{\hbar=0}(TG_1,\bfN_1)_{\operatorname{Frac}H_{T_2}^*(\pt)}.$$
Similarly, 
$$\mathscr{A}_{G,\bfM}(\sqrt{\mathcal{L}})
\subset \mathcal{A}_{\hbar=0}(TG_2,\bfN_2)_{\operatorname{Frac}H_{T_1}^*(\pt)}.$$
This establishes the inclusion ``$\subseteq$''.

The equality follows from the fact that $\mathscr{A}_{G,\bfM}(\sqrt{\mathcal{L}})$ is a normal ring \cite[Section 4.5]{BDFRT}. 
More precisely, assume 
$$X\in \mathcal{A}_{\hbar=0}(TG_1,\bfN_1)_{\operatorname{Frac}H_{T_2}^*(\pt)}
\bigcap
\mathcal{A}_{\hbar=0}
(TG_2,\bfN_2)_{\operatorname{Frac}H_{T_1}^*(\pt)}.$$
Then we can find nonzero 
$f_1\in H_{T_2}^*(\pt)$ and 
$f_2\in H_{T_1}^*(\pt)$ such that 
$$f_i X\in 
\mathcal{A}_{\hbar=0}(TG_i,\bfN_i).$$
Since $X$ itself is $W$-invariant, replacing $f_i$ by $\prod_{w\in W_{3-i}}w(f_i)\in H_{G_{3-i}}^*(\pt)$, we can further assume $f_iX$ is $W$-invariant. 
Since $\iota_*$ factors as 
$$
\mathscr{A}_{G,\bfM}(\sqrt{\mathcal{L}})
\longleftarrow
\mathscr{A}_{TG_i,\bfM}(\sqrt{\mathcal{L}}|_{TG_i})^{W}
\longleftarrow
\mathscr{A}_{T,\bfM}(\sqrt{\mathcal{L}}|_T)^{W},
$$
we have 
$$f_iX\in \mathcal{A}_{\hbar=0}(TG_i,\bfN_i)^W\subseteq \mathscr{A}_{G,\bfM}(\sqrt{\mathcal{L}}).$$
Over the variety 
(\cite[Lemma 4.4.1]{BDFRT}) 
$\operatorname{Spec}\mathscr{A}_{G,\bfM}(\sqrt{\mathcal{L}})$, the rational function
$X$ is defined over $\{f_1\neq 0\}$ and $\{f_2\neq 0\}$. Since $f_1,f_2$ are in different variables and $\mathscr{A}_{G,\bfM}(\sqrt{\mathcal{L}})$ is flat over $H_{G_1}^*(\pt)\otimes H_{G_2}^*(\pt)$ (see \cite[Remark 4.1.4]{BDFRT}), the union  $\{f_1\neq 0\}\cup\{f_2\neq 0\}$ has complement of codimension $\geq 2$. 
Since $\operatorname{Spec}\mathscr{A}_{G,\bfM}(\sqrt{\mathcal{L}})$ is normal (\cite[Lemma 4.5.1]{BDFRT}), $X$ is actually a regular function, i.e. $X\in \mathscr{A}_{G,\bfM}(\sqrt{\mathcal{L}})$.
\end{proof}

\section{Quantized Coulomb branch for the orthosymplectic quivers}\label{sec:orthosymp}

In this section, we compute some dressed monopole operators for the quantized Coulomb branch algebra associated to some orthosymplectic quivers.

\subsection{Orthosymplectic quivers}
We consider an orthosymplectic quiver $Q=(Q_0,Q_1,\epsilon)$. Here $Q$ is an unoriented simply-laced finite graph without edge loops, $Q_0$ is the set of vertices, $Q_1$ is the set of edges, and $\epsilon: Q_0\rightarrow \{\pm\}$ such that for any edge $i-j$, $\epsilon_i\neq \epsilon_j$.  
The following is an example of an orthosymplectic quiver.
\[
\begin{tikzpicture}
    \draw[thin] (0,0) -- (5,0)
          (3,0) -- (3,1);
    \node[circle,draw,fill=white,scale=.5] at (0,0){$-$};
    \node[circle,draw,fill=white,scale=.5] at (2,0){$-$};
    \node[circle,draw,fill=white,scale=.5] at (4,0){$-$};
    \node[circle,draw,fill=white,scale=.5] at (1,0){$+$};
    \node[circle,draw,fill=white,scale=.5] at (3,0){$+$};
    \node[circle,draw,fill=white,scale=.5] at (5,0){$+$};
    \node[circle,draw,fill=white,scale=.5] at (3,1){$-$};
\end{tikzpicture}
\]
Let $\bbV,\bbW$ be two $Q_0$-graded vector spaces such that if $\epsilon_i=+/-$, then $\bbV_i$ (resp. $\bbW_i$) is equipped with a non-degenerate symmetric/antisymmetric (resp. antisymmetric/symmetric) bilinear form. 

Let us denote $K_{+}=\SO$ and $K_-=\Sp$.
We define 
$$\begin{aligned}
\text{gauge group }G & 
=\prod_{i\in Q_0}K_{\epsilon_i}(\bbV_i),\\
\text{matter representation }\mathbf{M} & 
=\bigoplus_{i- j}\bbV_i\otimes \bbV_j \oplus \bigoplus_{i\in Q_0}\bbV_i\otimes \bbW_i.\\
\end{aligned}$$

\begin{lemma}
    When $G = \Sp(V)$ the group $\Bil(G)$ is freely generated by the trace pairing $\operatorname{Tr}_V$. 
When $G = \SO(V)$ the trace pairing $\operatorname{Tr}_V\in 2\Bil(G)$.
\end{lemma}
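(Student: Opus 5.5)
Since $G$ is simple in both cases (for $\SO(V)$ assume $\dim V\ge 3$; small cases are handled separately), every $G$-invariant symmetric bilinear form on $\mathfrak{g}$ is a scalar multiple of a fixed one. So $\Bil(G)$ is determined by the integrality conditions, evaluated on the cocharacter lattice $X_*(T)$ and on the coroots. The plan is to compute these conditions explicitly in coordinates on a maximal torus.

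\emph{Symplectic case.} Let $\dim V=2n$ and take $T$ diagonal with weights $\pm\epsilon_1,\dots,\pm\epsilon_n$ on $V$. Then $X_*(T)=\bigoplus\mathbb{Z}e_i$, where $e_i$ is dual to $\epsilon_i$, and
\[
\operatorname{Tr}_V(e_i,e_j)=2\delta_{ij}.
\]
Any invariant form is $B=c\operatorname{Tr}_V$. The condition $B(\lambda,\mu)\in\mathbb{Z}$ on $X_*(T)$ forces $2c\in\mathbb{Z}$. For the coroot conditions:
\begin{itemize}
\item the long-root coroots are $e_i$, with $B(e_i,e_i)=2c$;
\item the short-root coroots are $e_i-e_j$, with $B(e_i-e_j,e_i-e_j)=4c$.
\end{itemize}
Requiring these to lie in $2\mathbb{Z}$ gives $c\in\mathbb{Z}$ from the long coroots. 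Hence $\Bil(\Sp(V))=\mathbb{Z}\operatorname{Tr}_V$, and $\operatorname{Tr}_V$ itself satisfies both conditions, so it is a generator.

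\emph{Orthogonal case.} Here $X_*(T)=\bigoplus\mathbb{Z}e_i$ again, and the weights of $V$ are $\pm\epsilon_i$ (plus a weight $0$ if $\dim V$ is odd). So again
\[
\operatorname{Tr}_V(e_i,e_j)=2\delta_{ij}.
\]
We must show $\tfrac12\operatorname{Tr}_V\in\Bil(\SO(V))$. Set $B=\tfrac12\operatorname{Tr}_V$, so that $B(e_i,e_j)=\delta_{ij}$ is integral on $X_*(T)$. The coroots are:
\begin{itemize}
\item $e_i\pm e_j$, with $B(e_i\pm e_j,e_i\pm e_j)=2$;
\item in type $B$ (odd $\dim V$), the short-root coroots $2e_i$, with $B(2e_i,2e_i)=4$.
\end{itemize}
All of these are even, so $\operatorname{Tr}_V=2B\in 2\Bil(G)$.

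\emph{Small cases.} The identification of $\Bil(G)$ with multiples of a single form fails in three places, so I would check them directly in the same coordinates.
\begin{itemize}
\item $\SO(4)$ is not simple. Its invariant forms are $a\,\kappa_1+b\,\kappa_2$ on the two $\mathfrak{sl}_2$ factors. The claim only concerns $\operatorname{Tr}_V$, so it suffices to check that $\tfrac12\operatorname{Tr}_V$ satisfies the integrality and evenness conditions. This uses only the computation above, which never relied on simplicity.
\item $\SO(2)$ is a torus, so the only condition is integrality, and $\tfrac12\operatorname{Tr}_V(e_1,e_1)=1$.
\item $\SO(3)$ with $\dim V=3$: the cocharacter is $e_1$, the coroot is $2e_1$, and $\tfrac12\operatorname{Tr}_V(2e_1,2e_1)=4\in 2\mathbb{Z}$.
\end{itemize}

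The only real obstacle is keeping the normalizations straight: distinguishing $X_*(T)$ from the coroot lattice (for $\SO$ they differ) and identifying the long versus short coroots. The coordinate computation handles both.
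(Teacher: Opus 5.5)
Your proposal is correct and follows essentially the same route as the paper: you compute $\operatorname{Tr}_V(e_i,e_j)=2\delta_{ij}$ on the cocharacter lattice, use simplicity of $\Sp(V)$ to reduce to multiples $c\operatorname{Tr}_V$ with the coroot $e_i$ forcing $c\in\mathbb{Z}$, and for $\SO(V)$ check directly that $\tfrac12\operatorname{Tr}_V$ is integral on cocharacters and even on coroots. Your separate treatment of $\SO(2)$, $\SO(3)$, $\SO(4)$ is a harmless extra precaution, since, as you note, the orthogonal argument never uses simplicity.
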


\begin{proof}
Let $\check{\Lambda}\supset\check{Q}$ be the cocharacter lattice and the coroot lattice of $G$. 
By embedding $G\subseteq \GL(V)$, we can identify the cocharacter lattice of $G$ as a sublattice of $\mathbb{Z}^{\oplus n}$ if $\dim V=n$. 
The trace pairing can be identified with the standard dot product. 

When $G=\Sp(V)$ the cocharacter lattice coincides with the coroot lattice and is given by 
$$\check{Q}:=\{(a_1,\ldots,a_m,-a_1,\ldots,-a_m):a_i\in \mathbb{Z}\}$$
where $m=\dim V/2$. 
For any $\lambda,\mu\in \check{Q}$, it is easy to see
$$\operatorname{Tr}(\lambda,\mu)\in 2\mathbb{Z}\subseteq \mathbb{Z},\qquad 
\operatorname{Tr}(\lambda,\lambda)\in 2\mathbb{Z}.$$
Thus $\operatorname{Tr}_V\in \Bil(G)$. 
Since $G$ is simple, there is a unique $G$-invariant form up to scale. 
The Lemma follows from the fact that $\operatorname{Tr}(\lambda,\lambda)=2$ for $\lambda=(1,0,\ldots,-1,0,\ldots)$.

When $G=\SO(V)$ with $\dim V=n=2m+1$ odd, the cocharacter lattice is 
$$\check{\Lambda}:=\{(a_1,\ldots,a_m,0,-a_1,\ldots,-a_m):a_i\in \mathbb{Z}\}$$
while the coroot lattice $\check{Q}$ is the sublattice with $\sum a_i\equiv 0\bmod 2$. 
For any $\lambda,\mu\in \check{\Lambda}$ and $\nu\in \check{Q}$, 
$$\operatorname{Tr}(\lambda,\mu)\in 2\mathbb{Z},\qquad 
\operatorname{Tr}(\nu,\nu)\in 4\mathbb{Z}.$$
Thus we have $\operatorname{Tr}_V\in 2\Bil(G)$. 
The case when $\dim V=2m$ is even is similar. 
\end{proof}

\begin{prop}\label{prop:ano}
    The pair $(G,\mathbf{M})$ satisfies the anomaly cancellation condition \eqref{equ:anomaly} if and only if for any symplectic vertex $i$, we have
$$\ww_i+\sum_{j-i}\vv_j\equiv 0\mod 2.$$
Note that when $i$ is an orthogonal vertex, this is always true. 
\end{prop}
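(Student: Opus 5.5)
The plan is to use criterion \eqref{equ:anomaly}: compute the pulled-back trace form $\rho^*\operatorname{Tr}=\operatorname{Tr}_{\bfM}(d\rho(-),d\rho(-))$ on $\mathfrak{g}=\bigoplus_i\mathfrak{k}_{\epsilon_i}(\bbV_i)$, and check whether it lies in $2\Bil(G)$. Each $K_{\epsilon_i}(\bbV_i)$ is semisimple (except in the degenerate cases $\SO_2$ or trivial groups, which we handle separately or exclude). Hence, as in the proof of Lemma~\ref{lem:factorLB}, there are no mixed invariant forms, and
$$\Bil(G)=\bigoplus_i\Bil(K_{\epsilon_i}(\bbV_i)).$$
So $\rho^*\operatorname{Tr}$ decomposes as a sum of its restrictions to the individual factors, and the condition can be checked vertex by vertex.

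The first step is to compute the restriction to a single factor $\mathfrak{k}_i=\mathfrak{k}_{\epsilon_i}(\bbV_i)$. As a $K_{\epsilon_i}(\bbV_i)$-representation, $\bfM$ restricts to
$$\bbV_i^{\oplus(\ww_i+\sum_{j-i}\vv_j)}\oplus(\text{trivial}),$$
since each edge term $\bbV_i\otimes\bbV_j$ and the framing term $\bbV_i\otimes\bbW_i$ are multiples of $\bbV_i$, and edges not touching $i$ act trivially. Trace forms are additive over direct sums, so
$$\rho^*\operatorname{Tr}|_{\mathfrak{k}_i}=n_i\operatorname{Tr}_{\bbV_i},\qquad n_i:=\ww_i+\sum_{j-i}\vv_j.$$

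The second step applies the preceding Lemma. At an orthogonal vertex, $\operatorname{Tr}_{\bbV_i}\in2\Bil$, so $n_i\operatorname{Tr}_{\bbV_i}\in2\Bil$ for every $n_i$, and the condition is automatic. At a symplectic vertex, $\Bil(\Sp(\bbV_i))=\mathbb{Z}\operatorname{Tr}_{\bbV_i}$ is free, so $n_i\operatorname{Tr}_{\bbV_i}\in2\Bil$ if and only if $n_i$ is even. Since $\Bil(G)$ is the direct sum above, $\rho^*\operatorname{Tr}\in2\Bil(G)$ exactly when each component lies in $2\Bil$ of its factor. This gives the stated parity condition.

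The main obstacle is the direct-sum decomposition of $\Bil(G)$, together with the degenerate factors. For $\SO_2$ (a torus) or trivially small groups, cross terms between factors could a priori appear. I would handle this by noting that $\rho^*\operatorname{Tr}$ has no cross terms at all: for $x\in\mathfrak{k}_i$ and $y\in\mathfrak{k}_j$ with $i\neq j$, the operators $d\rho(x)$ and $d\rho(y)$ act on $\bbV_i\otimes\bbV_j$ as $x\otimes1$ and $1\otimes y$, so the trace of the composite is $\operatorname{tr}(x)\operatorname{tr}(y)=0$, and they act on other summands with at least one of them zero. Then membership in $2\Bil(G)$ reduces to a check on each factor's cocharacters and coroots. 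This uses the integrality conditions in the definition of $\Bil$, which are checked on pairs of cocharacters and on coroots, and both of these split across the product. For $\SO_2$ the direct computation in the Lemma's proof still gives $\operatorname{Tr}_V\in2\Bil$.
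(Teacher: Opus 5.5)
Your proposal is correct and is essentially the paper's proof: the cross terms vanish because elements of $\mathfrak{so}$ and $\mathfrak{sp}$ are traceless, so $\rho^*\operatorname{Tr}=\sum_i(\ww_i+\sum_{j-i}\vv_j)\operatorname{Tr}_{\bbV_i}$, and the integrality conditions are then checked factor by factor using the preceding lemma on $\Bil(\Sp)$ and $\Bil(\SO)$. Your initial appeal to a splitting $\Bil(G)=\bigoplus_i\Bil(K_{\epsilon_i}(\bbV_i))$ is unnecessary, and it can fail when several $\SO_2$ factors are present; your block-diagonal fallback is exactly the argument the paper uses.
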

\begin{remark}
    This equivalent condition also appeared in Nakajima's various talks, for example, see \cite{Nak23}.
\end{remark}

\begin{proof}
Let us embed $G\subset \prod_{i\in Q_0}\GL(\bbV_i)$. 

Let 
$$\lambda = \sum_{i\in Q_0}\lambda_i,\text{ with } \lambda_i\in \mathfrak{gl}(\bbV_i).$$
be a coweight of $G$. 
Then 
$$\begin{aligned}
d\rho(\lambda) & = \sum_{i-j} 
(\lambda_i\otimes \operatorname{id}_{\bbV_j}
+\operatorname{id}_{\bbV_i}\otimes \lambda_j)+
\sum_{i\in Q_0}(\lambda_i\otimes \operatorname{id}_{\bbW_i})\\
& \in \bigoplus_{i-j} \operatorname{End}(\bbV_i\otimes \bbV_j)\oplus \bigoplus_{i\in Q_0}\operatorname{End}(\bbV_i\otimes \bbW_i)\subset \mathfrak{gl}(\mathbf{M}).
\end{aligned}$$
Let $\mu$ be another coweight. The matrix product is $$\begin{aligned} 
d\rho(\lambda)d\rho(\mu)
& = \sum_{i-j}
(\lambda_i\otimes \operatorname{id}_{\bbV_j}
+\operatorname{id}_{\bbV_i}\otimes \lambda_j)
(\mu_i\otimes \operatorname{id}_{\bbV_j}
+\operatorname{id}_{\bbV_i}\otimes \mu_j)\\
& +\sum_{i\in Q_0}(\lambda_i\otimes\operatorname{id}_{\bbW_i})
(\mu_i\otimes\operatorname{id}_{\bbW_i})
\\
& = \sum_{i-j}
(\lambda_i\mu_i\otimes \operatorname{id}_{\bbV_j}
+\operatorname{id}_{\bbV_i}\otimes \lambda_j\mu_j
+\lambda_i\otimes \mu_j
+\mu_i\otimes \lambda_j)\\
& +\sum_{i\in Q_0}(\lambda_i\mu_i\otimes\operatorname{id}_{\bbW_i})
\end{aligned}$$
Note that $\operatorname{tr}(X\otimes Y)=\operatorname{tr}(X)\cdot \operatorname{tr}(Y)$, and $\operatorname{Tr}(X)=0$ if $X$ is in the symplectic or orthogonal Lie algebra. 
The trace pairing is given by 
$$\begin{aligned}
\operatorname{Tr}(d\rho(\lambda),d\rho(\mu)) 
& = \sum_{i-j}
(\vv_j\operatorname{tr}(\lambda_i\mu_i)
+\vv_i\operatorname{tr}(\lambda_j\mu_j))
+ \sum_{i\in Q_0}
\ww_i\operatorname{tr}(\lambda_i\mu_i).\\
& = \sum_{i\in Q_0}
\left(\ww_i+\sum_{j-i}\vv_j\right)
\operatorname{tr}(\lambda_i\mu_i).
\end{aligned}$$
Since each component $\lambda_i$ of $\lambda$ is independent, the condition is equivalent to 
$$\forall i\in Q_0,\qquad 
\left(\ww_i+\sum_{j-i}\vv_j\right)
\operatorname{Tr}_{\bbV_i} \in 2 \Bil(K_{\epsilon_i}(V_i)).$$
When $i$ is orthogonal, by the above assumption, this is always true. 
When $i$ is symplectic, since $\operatorname{Tr}_{\bbV_i}$ is the generator of the group $\Bil(K_{\epsilon_i}(\bbV_i))$, so we get the condition.  
\end{proof}
From now on, we assume that $\{\bbV_i,\bbW_i\mid i\in Q_0\}$ satisfies the condition in Proposition \ref{prop:ano} throughout.

\subsection{Lagrangian}
It is useful to write 
$$\mathbf{M}=\bigoplus_{\epsilon_i=-}
\bbV_i\otimes \left(\bbW_i\oplus \bigoplus_{j-i}
\bbV_j\right)
\oplus \bigoplus_{\epsilon_i=+}
\bbW_i\otimes \bbV_i.
$$
Here we write symplectic spaces on the left of $\otimes$, and orthogonal spaces on the right of $\otimes$. 
For any vertex $i\in Q_0$, let us fix maximal isotropic subspaces $V_i\leq \bbV_i$ and $W_i\leq \bbW_i$. Let $v_i$ (resp. $w_i$) denote dimension of $V_i$ (resp. $W_i$), and let $\vv_i$ (resp. $\ww_i$) denote dimension of $\bbV_i$ (resp. $\bbW_i$). For each $n\in \mathbb{Z}_{\geq 0}$, let \[\bar{n}=\begin{cases}
    1, & \textit{if n is odd},\\
    0 , & \textit{if n is even}.
\end{cases}\] Then we have 
$$
\begin{aligned}
\bbV_i& \cong V_i\oplus V_i^* \text{ for $\epsilon_i=-$},&\qquad&&
\bbV_i& \cong V_i\oplus V_i^*\oplus \mathbb{C}^{\overline{\vv_i}} \text{ for $\epsilon_i=+$},\\ 
\bbW_i & \cong W_i\oplus W_i^* \text{ for $\epsilon_i=+$},&\qquad&&
\bbW_i& \cong W_i\oplus W_i^*\oplus \mathbb{C}^{\overline{\ww_i}} \text{ for $\epsilon_i=-$}.
\end{aligned}
$$
For each symplectic vertex $i$, we fix a space $\Upsilon_i$ such that 
$$
\mathbb{C}^{\overline{\ww_i}}\oplus \bigoplus_{j-i}\mathbb{C}^{\overline{\vv_j}}\cong 
\Upsilon_i \oplus \Upsilon_i^*.$$
The existence of $\Upsilon_i$ follows from the anomaly cancellation condition in Proposition \ref{prop:ano}. 

We will use a smaller flavor group $$F = \prod_{i}\GL(W_i).$$ 
Let us denote 
$$
G_+=\prod_{\epsilon_i=-}\GL(V_i)\times 
\prod_{\epsilon_i=+}\SO(\bbV_i),\qquad 
G_-=\prod_{\epsilon_i=-}\Sp(\bbV_i)\times \prod_{\epsilon_i=+}\GL(V_i).$$
We consider 
$$
\begin{aligned}
\bfN_+ & = \bigoplus_{\epsilon_i=-}
V_i\otimes \left(\bbW_i\oplus \bigoplus_{j-i}\bbV_j\right)
\oplus 
\bigoplus_{\epsilon_i=+}
W_i\otimes \bbV_i,\\
\bfN_- & = \bigoplus_{\epsilon_i=-}
\bbV_i\otimes \left(W_i\oplus \bigoplus_{j-i}V_j\oplus \Upsilon_i\right)
\oplus 
\bigoplus_{\epsilon_i=+}
W_i\otimes \bbV_i.\\
\end{aligned}$$
We can naturally identify $\bfN_{\pm}$ as a $G_{\pm}\times F$-Lagrangian of $\mathbf{M}$. 
Let $G_0=G_+\cap G_-=\prod_{i\in Q_0}\GL(V_i)$. 
Then $G_0$ acts on 
$$\bfN_0:=\bfN_+\cap \bfN_-
= \bigoplus_{\epsilon_i=-}
V_i\otimes \left(W_i\oplus \bigoplus_{j-i}V_j\oplus \Upsilon_i\right)
\oplus 
\bigoplus_{\epsilon_i=+}
W_i\otimes \bbV_i.$$
Moreover, 
$$
\begin{aligned}
\bfN_+ &
=\bfN_0\oplus \bigoplus_{\epsilon_i=-}
V_i\otimes \left(W_i\oplus \bigoplus_{j-i}V_j\oplus \Upsilon_i\right)^*,\\
\bfN_- &
=\bfN_0\oplus \bigoplus_{\epsilon_i=-}
V_i^*\otimes \left(W_i\oplus \bigoplus_{j-i}V_j\oplus \Upsilon_i\right),
\end{aligned}$$
respectively. 
We let \[\underline{\bfN}:=\bigoplus_{\epsilon_i=-}
V_i\otimes \left(W_i\oplus \bigoplus_{j-i}V_j\oplus \Upsilon_i\right)^*.\]

For each $i\in Q_0$, we choose a basis $\{e_{i,r}, f_{i,r}\mid 1\leq r\leq v_i\}$ for $\bbV_i$ if $\bbV_i$ has even dimension,  and $\{e_{i,r}, f_{i,r}, u_i\mid 1\leq r\leq v_i\}$ if $\bbV_i$ has odd dimension. Here $\{e_{i,r}\mid 1\leq r\leq v_i\}$ is a basis for $V_i$ and $\{f_{i,r}\mid 1\leq r\leq v_i\}$ is a basis for $V_i^*$ such that $(e_{i,r},f_{i,s})=\delta_{r,s}$, the vector $u_i$ satisfies $(u_i,u_i)=1$. We do the same for the vector space $\bbW_i$. This determines a maximal torus $T_{\epsilon_i}(\bbV_i)=\{\diag(z_{i,1},\cdots, z_{i,v_i},z_{i,1}^{-1},\cdots, z_{i,v_i}^{-1})\}\subset K_{\epsilon_i}(\bbV_i)$ if $\bbV_i$ has even dimension, and $T_+(\bbV_i)=\{\diag(z_{i,1},\cdots, z_{i,v_i},1,z_{i,1}^{-1},\cdots, z_{i,v_i}^{-1})\}\subset K_+(\bbV_i)$ if $\bbV_i$ has odd dimension. We let $x_{i,r}$ denote the $r$-th standard character of $\Lie(T_{\epsilon_i}(\bbV_i))$. 
Hence, $x_{i,r}+x_{i,r+v_i+\overline{\vv_i}}=0$ for $1\leq r\leq v_i$, and $x_{i,1+v_i}=0$ if $\vv_i$ is odd. Similarly, we let $w_{i,r}$ denote the standard character of the Lie algebra of the maximal torus of $K_{-\epsilon_i}(\bbW_i)$. We use $\epsilon_{i,r}$ to denote the $r$-th cocharacter of $T_{\epsilon_i}(\bbV_i)$. Notice that $\epsilon_{i,1}$ is a minuscule coweight for $K_+(\bbV_i)$.

\subsection{Monopole operators}

Now let us consider the quantized Coulomb branch algebra $\mathscr{A}^F_\hbar(G,\bfM)$ for the pair $(G,\bfM)$ with flavor symmetry group $F$. Let 
\begin{equation}\label{equ:G1G2}
    G_1:=\prod_{\epsilon(i)=-}\Sp(\bbV_i), \quad G_2:=\prod_{\epsilon(i)=+}\SO(\bbV_i),
\end{equation}
and let 
\[\bfN_1:=\bfN_-, \quad\bfN_2=\bfN_+.\]
Then $(G,M)$ is of separated cotangent type and satisfies the condition \eqref{eq:assumesemisimple}.

In this section, we compute the monopole operator associated to the first fundamental coweight $\epsilon_{i,1}$ for each vertex. We use $d_{i,r}$ to denote $d_{\epsilon_{i,r}}$ for $1\leq r\leq \vv_i$. Then $d_{i,v_i+r+\overline{\vv_i} }=d_{i,r}^{-1}$ for $1\leq r\leq v_i$, and $d_{i,1+v_i}=1$ if $\vv_i$ is odd.

Recall that a lattice $L$ in $\bbV_i(\calK)$ is an $\calO$-submodule in $\bbV_i(\calK)$ such that $L\otimes_{\calO}\calK=\bbV_i(\calK)$. For a lattice $L$, we define its dual $L^\vee:=\{x\in \bbV_i(\calK)\mid (x,L)\subseteq \calO\}$, where the pairing $(-,-)$ is induced from the one on $\bbV_i$. Then the affine Grassmannian $\Gr_{K_{\epsilon_i}(\bbV_i)}$ is the set of self-dual lattices in $\bbV_i(\calK)$, and the Schubert cell
\begin{align*}
    \Gr_{K_{\epsilon_i}(\bbV_i),\epsilon_{i,1}}=&\{\textit{self-dual lattices }  L\subset \bbV_i(\calK)\mid\\
    &z\bbV_i(\calO)\subset L\subset z^{-1}\bbV_i(\calO), \dim_{\mathbb{C}} \frac{\bbV_i(\calO)}{\bbV_i(\calO)\cap L}=1\}.
\end{align*}
There is a tautological line bundle $\mathcal{L}_i$ on $\Gr_{K_{\epsilon_i}(\bbV_i),\epsilon_{i,1}}$, whose fiber at a lattice $L$ is $\frac{\bbV_i(\calO)}{\bbV_i(\calO)\cap L}$. Under the isomorphism $H_G^*(\Gr_{K_{\epsilon_i}(\bbV_i),\epsilon_{i,1}})\simeq H_G^*(G/P_{\epsilon_{i,1}})\simeq \mathbb{C}[\mathfrak{t}]^{W_{\epsilon_{i,1}}}$, the first Chern class $c_1(\mathcal{L}_i)$ is given by $x_{i,1}$. The torus fixed points in $\Gr_{K_{\epsilon_i}(\bbV_i),\epsilon_{i,1}}$ are $\{z^{\pm\epsilon_{i,r}}K_{\epsilon_i}(\bbV_i)(\calO)/K_{\epsilon_i}(\bbV_i)(\calO)\mid 1\leq r\leq v_i\}$. To simplify notation, we will just use $z^{\pm\epsilon_{i,r}}$ to denote the corresponding fixed points, if there is no confusion.

Let us introduce the following functions:
\[V_i(z):=\prod_{s=1}^{v_i}(z-x_{i,s}), \quad W_i(z):=\prod_{s=1}^{w_i}(z-w_{i,s}),\]
\[\bbV_i(z):=z^{\overline{\vv_i}}\prod_{s=1}^{v_i}(z^2-x^2_{i,s}), \quad \bbW_i(z):=z^{\overline{\ww_i}}\prod_{s=1}^{w_i}(z^2-w^2_{i,s}),\]
and 
\[\bbV_{i,r}(z):=\frac{\bbV_i(z)}{z-x_{i,r}}, \quad \bbW_{i,r}(z):=\frac{\bbW_i(z)}{z-w_{i,r}}.\]

\subsubsection{Orthogonal vertex}
Recall $G_+=\prod_{\epsilon_i=-}\GL(V_i)\times 
\prod_{\epsilon_i=+}\SO(\bbV_i)$. Let us choose a vertex $i\in Q_0$ such that $\epsilon_i=+$. Then $\epsilon_{i,1}$ is a minuscule coweight for $K_{\epsilon_i}(\bbV_i)=\SO(\bbV_i)$. We have the following formulae. 

\begin{prop}\label{prop:orth}
    Fix a polynomial $f(x)\in \mathbb{C}[x]$.
    \begin{enumerate}
        \item If $\dim \bbV_i=2v_i$, then \begin{align*}
        \sigma\circ z^*&\circ (\iota_*)^{-1}\bigg(f(c_1(\mathcal{L}_i))\cdot [\calR_{G_+,\bfN_+,\epsilon_{i,1}}]\bigg)=\sum_{r=1}^{v_i} f(x_{i,r})\frac{(-1)^{w_i}W_i(x_{i,r}+\frac{\hbar}{2})}{\bbV_{i,r}(x_{i,r})/2x_{i,r}}d_{i,r}\\&+\sum_{r=1}^{v_i} f(-x_{i,r})(-1)^{w_i+\sum_{k-i}v_k}\frac{W_i(-x_{i,r}+\frac{\hbar}{2})\prod_{k-i}\bbV_k(x_{i,r}-\frac{\hbar}{2})}{\bbV_{i,r}(x_{i,r})/2x_{i,r}}d_{i,r}^{-1}
        \in \mathscr{A}^F_\hbar(G,\bfM). 
    \end{align*}
    \item 
    If $\dim \bbV_i=2v_i+1$, then \begin{align*}
        \sigma\circ z^*&\circ (\iota_*)^{-1}\bigg(f(c_1(\mathcal{L}_i))\cdot [\calR_{G_+,\bfN_+,\epsilon_{i,1}}]\bigg)
        =\sum_{r=1}^{v_i} f(x_{i,r})\frac{(-1)^{w_i}W_i(x_{i,r}+\frac{\hbar}{2})}{\bbV_{i,r}(x_{i,r})/2x_{i,r}}d_{i,r}\\
        &-\sum_{r=1}^{v_i} f(-x_{i,r})(-1)^{w_i+\sum_{k-i}v_k}\frac{W_i(-x_{i,r}+\frac{\hbar}{2})\prod_{k-i}\bbV_k(x_{i,r}-\frac{\hbar}{2})}{\bbV_{i,r}(x_{i,r})/2x_{i,r}}d_{i,r}^{-1}\in \mathscr{A}^F_\hbar(G,\bfM). 
    \end{align*}
    \end{enumerate}   
\end{prop}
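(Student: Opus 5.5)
The argument has two parts. First, apply the minuscule formula \eqref{eq:minu} to $(G_+,\bfN_+)$ with $\lambda=\epsilon_{i,1}$, then transport the result by $\sigma$. Second, deduce membership in $\mathscr{A}^F_\hbar(G,\bfM)$ from Theorem~\ref{thm:noncotsub}.

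Here $G_+=TG_2$ up to replacing the $\GL(V_j)$ factors by their tori, so the class $f(c_1(\mathcal{L}_i))[\calR_{G_+,\bfN_+,\epsilon_{i,1}}]$ lives in the cotangent algebra $\mathcal{A}^F_\hbar(TG_2,\bfN_2)$. Since $\epsilon_{i,1}$ is minuscule for $\SO(\bbV_i)$, the set $W^\lambda$ corresponds to the orbit $\{\pm\epsilon_{i,r}\}_{1\le r\le v_i}$. The class $c_1(\mathcal{L}_i)$ localizes at $z^{\pm\epsilon_{i,r}}$ to $\pm x_{i,r}$.

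Computing the pieces of \eqref{eq:minu} goes in three steps.

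\emph{Tangent Euler class.} By \eqref{equ:wts}, the tangent space $T_{\epsilon_{i,r}}\Gr$ consists of the root spaces $\mathfrak{g}_\alpha$ with $\langle\epsilon_{i,r},\alpha\rangle=1$. These are $x_{i,r}\pm x_{i,s}$ for $s\ne r$, together with $x_{i,r}$ when $\vv_i$ is odd. Their product is $\bbV_{i,r}(x_{i,r})/2x_{i,r}$ up to sign. In the even case this is $\prod_{s\ne r}(x_{i,r}^2-x_{i,s}^2)$. In the odd case one gets $x_{i,r}\prod_{s\neq r}(x_{i,r}^2-x_{i,s}^2)$, and the extra $x_{i,r}$ is cancelled by the division by $2x_{i,r}$ up to the factor $2$. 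I will need to track this factor of $2$ against the normalization of $d$; it may come from $\mathbb{C}^{\overline{\vv_i}}$.

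\emph{Matter Euler class.} Next, compute $\Eu(z^\lambda\bfN_{+,\calO}/(z^\lambda\bfN_{+,\calO}\cap\bfN_{+,\calO}))$ using \eqref{equ:Euquo}. Only the weights $\chi$ of $\bfN_+$ with $\langle\pm\epsilon_{i,r},\chi\rangle<0$ contribute. These weights come from two places.
\begin{itemize}
\item From $W_i\otimes\bbV_i$, with weights $w_{i,s}\pm x_{i,r}$. Pairing $-1$ occurs for $w_{i,s}-x_{i,r}$ at $+\epsilon_{i,r}$, which gives $\prod_s(w_{i,s}-x_{i,r}-\hbar/2)=(-1)^{w_i}W_i(x_{i,r}+\hbar/2)$. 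At $-\epsilon_{i,r}$ it gives $\prod_s(w_{i,s}+x_{i,r}-\hbar/2)$, which is $(-1)^{w_i}W_i(-x_{i,r}+\hbar/2)$.
\item From $V_k\otimes\bbV_i\subset V_k\otimes\bbV_i$ for symplectic neighbours $k$, with weights $x_{k,t}\pm x_{i,r}$. At $+\epsilon_{i,r}$ this contributes $\prod_t(x_{k,t}-x_{i,r}-\hbar/2)$. Since $\bfN_+$ contains $V_k\otimes\bbV_i$ only with $V_k$ (not $V_k^*$), the two shifts are asymmetric.
\end{itemize}
Applying $\sigma$ via \eqref{equ:sigma} then replaces the $\bfN_2$-Euler class by the $\bfN_1=\bfN_-$-Euler class times the sign $(-1)^{a(\lambda)}$. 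In $\bfN_-$ the neighbour contribution is $\bbV_k\otimes\bbV_i$ paired with $V_i$ on the orthogonal side, that is $\bbV_k\otimes V_i$. This produces $\prod_{k-i}\bbV_k(x_{i,r}-\hbar/2)$ at $-\epsilon_{i,r}$ and nothing at $+\epsilon_{i,r}$. The sign in \eqref{equ:sigma} is computed from $\underline{\bfN}$: at $\lambda=-\epsilon_{i,r}$ the weights with positive pairing lie in $V_k\otimes V_i^*$-type pieces, giving $(-1)^{\sum_{k-i}v_k}$.

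\emph{Odd-dimensional sign.} In the odd-dimensional case the extra vector $u_i$ changes the $T_{-\epsilon_{i,r}}$ tangent sign (the weight $-x_{i,r}$ versus $x_{i,r}$). This accounts for the minus sign in part (2).

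Membership in $\mathscr{A}^F_\hbar(G,\bfM)$ follows once the operator is shown to be $W$-invariant. By \eqref{eq:minu} it is already $W_2$-invariant for the usual action on the $\bfN_2$ side, and hence invariant under the twisted $W_2$ action after applying $\sigma$. $W_1$-invariance holds because the operator involves only $d_{i,r}^{\pm1}$ for the orthogonal vertex $i$, and its coefficients depend on symplectic variables only through the even functions $\bbV_k(\cdot)$, which are $W_1$-invariant. The sign twist $\varphi$ is trivial on cocharacters of $T_2$. So Theorem~\ref{thm:noncotsub} with $i=2$ applies.

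The main obstacle is the sign bookkeeping: the $(-1)^{a(\lambda)}$ from \eqref{equ:sigma}, the orientation signs in $\Eu(T\Gr)$ at $\pm\epsilon_{i,r}$, and the parity-dependent factor. I would first carry out the computation completely for $v_i=1$ to fix all signs, and then check consistency by verifying twisted $W_2$-invariance of the final expression directly.
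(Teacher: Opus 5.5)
Your proposal is correct and follows the paper's proof. You apply \eqref{eq:minu} together with \eqref{equ:sigma}, using $\bfN_+=\bfN_0\oplus\underline{\bfN}$ and $\bfN_-=\bfN_0\oplus\underline{\bfN}^*$, to read off the sign $(-1)^{\sum_{k-i}v_k}$ at $-\epsilon_{i,r}$, the $\bfN_-$ Euler classes from $W_i\otimes\bbV_i$ and $\bbV_k\otimes V_i$, and the odd-case tangent sign, and you then get membership from Theorem~\ref{thm:noncotsub} via twisted $W_2$- and usual $W_1$-invariance. The factor of $2$ you planned to track does not exist: since $\bbV_i(z)$ already contains $z^{\overline{\vv_i}}$, one has $\bbV_{i,r}(x_{i,r})/2x_{i,r}=x_{i,r}^{\overline{\vv_i}}\prod_{s\neq r}(x_{i,r}^2-x_{i,s}^2)$, which equals $\Eu(T_{\epsilon_{i,r}}\Gr_{\epsilon_{i,1}})$ exactly and equals $(-1)^{\overline{\vv_i}}\Eu(T_{-\epsilon_{i,r}}\Gr_{\epsilon_{i,1}})$.
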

\begin{proof}
    Let us first assume that $\dim \bbV_i=2v_i$ is even. 
    Since $\bfN_+=\bfN_0\oplus \underline{\bfN}$ and $\bfN_-=\bfN_0\oplus \underline{\bfN}^*$, by \eqref{eq:minu} and \eqref{equ:sigma}, we get
    \begin{align}\label{equ:szl}
        &\sigma\circ z^*\circ (\iota_*)^{-1}\bigg(f(c_1(\mathcal{L}_i))\cdot [\calR_{G_+,\bfN_+,\epsilon_{i,1}}]\bigg)\\
        =&\sum_{r=1}^{v_i} f(x_{i,r})\frac{(-1)^{\sum_{\chi}\max (\chi(\epsilon_{i,r}),0)\dim\underline{\bfN}(\chi)}}{\Eu(T_{\epsilon_{i,r}}\Gr_{\epsilon_{i,1}})}\Eu\bigg(z^{\epsilon_{i,r}}\bfN_-(\calO)/(z^{\epsilon_{i,r}}\bfN_-(\calO)\cap \bfN_-(\calO))\bigg)d_{i,r}\notag\\
        +&\sum_{r=1}^{v_i} f(-x_{i,r})\frac{(-1)^{\sum_{\chi}\max (\chi(-\epsilon_{i,r}),0)\dim\underline{\bfN}(\chi)}}{\Eu(T_{-\epsilon_{i,r}}\Gr_{\epsilon_{i,1}})}\Eu\bigg(z^{-\epsilon_{i,r}}\bfN_-(\calO)/(z^{-\epsilon_{i,r}}\bfN_-(\calO)\cap \bfN_-(\calO))\bigg)d_{i,r}^{-1}.\notag
    \end{align}
    Here $T_{\pm\epsilon_{i,r}}\Gr_{\epsilon_{i,1}}$ denotes the tangent space of $\Gr_{\SO(\bbV_i),\epsilon_{i,1}}$ at the torus fixed points $\pm\epsilon_{i,r}$. By formula \eqref{equ:wts},
    \[\Eu(T_{\pm\epsilon_{i,r}}\Gr_{\epsilon_{i,1}})=\prod_{1\leq s\neq r\leq v_i}(x_{i,r}^2-x_{i,s}^2)=\frac{\bbV_{i,r}(x_{i,r})}{2x_{i,r}}.\]
    Since \[\underline{\bfN}:=\bigoplus_{\epsilon_k=-}V_k\otimes \left(W_k\oplus \bigoplus_{j-k}V_j\oplus \Upsilon_k\right)^*,\] and for each symplectic vertex $k$,  $$\mathbb{C}^{\overline{\ww_k}}\oplus \bigoplus_{j-k}\mathbb{C}^{\overline{\vv_j}}\cong \Upsilon_k \oplus \Upsilon_k^*,$$
    we get
    \[\sum_{\chi}\max (\chi(\epsilon_{i,r}),0)\dim\underline{\bfN}(\chi)=0,\]
    and 
    \[\sum_{\chi}\max (\chi(-\epsilon_{i,r}),0)\dim\underline{\bfN}(\chi)=\sum_{k-i}v_k.\]
    Recall \[\bfN_- = \bigoplus_{\epsilon_k=-}\bbV_k\otimes \left(W_k\oplus \bigoplus_{j-k}V_j\oplus \Upsilon_k\right)\oplus \bigoplus_{\epsilon_k=+}W_k\otimes \bbV_k.\]
    By \eqref{equ:Euquo}, only the term $W_i\otimes V_i^*\subset W_i\otimes \bbV_i\subset \bfN_-$ has contribution to \[\Eu\bigg(z^{\epsilon_{i,r}}\bfN_-(\calO)/(z^{\epsilon_{i,r}}\bfN_-(\calO)\cap \bfN_-(\calO))\bigg),\]
    and its contribution is 
    \[\prod_{1\leq a\leq w_i}(w_{i,a}-x_{i,r}-\frac{\hbar}{2})=(-1)^{w_i}W_i(x_{i,r}+\frac{\hbar}{2}).\]
    Similarly, only the term $\bigoplus_{k-i}\bbV_k\otimes V_i\oplus W_i\otimes V_i\subset \bfN_-$ has contribution to \[\Eu\bigg(z^{-\epsilon_{i,r}}\bfN_-(\calO)/(z^{-\epsilon_{i,r}}\bfN_-(\calO)\cap \bfN_-(\calO))\bigg),\]
    and its contribution is
    \begin{align*}
        &\prod_{k-i}\prod_{1\leq a\leq v_k}(x_{k,a}+x_{i,r}-\frac{\hbar}{2})(-x_{k,a}+x_{i,r}-\frac{\hbar}{2})\prod_{1\leq b\leq w_i}(w_{i,b}+x_{i,r}-\frac{\hbar}{2})\\
        =&(-1)^{w_i}W_i(-x_{i,r}+\frac{\hbar}{2})\prod_{k-i}\bbV_k(x_{i,r}-\frac{\hbar}{2}).
    \end{align*}
    Substituting these into \eqref{equ:szl}, we get the desired formula.

    For the case $\dim \bbV_i=2v_i+1$ is odd, we only need to notice that the only difference is 
    \[\Eu(T_{\pm\epsilon_{i,r}}\Gr_{\epsilon_{i,1}})=\pm x_{i,r}\prod_{1\leq s\neq r\leq v_i}(x_{i,r}^2-x_{i,s}^2)=\pm\frac{\bbV_{i,r}(x_{i,r})}{2x_{i,r}}.\]

    Finally, let us show that these operators belong to $\mathscr{A}^F_\hbar(G,\bfM)$. Recall the definition of $G_i$ in $\eqref{equ:G1G2}$. By definition, these operators are invariant under the twisted action of the Weyl group of $G_2$. It is obvious that these operators are invariant under the usual Weyl group action of $G_1$. Hence, by Theorem \ref{thm:noncotsub}, we get that they do belong to $\mathscr{A}^F_\hbar(G,\bfM)$.
\end{proof}

\subsubsection{Symplectic vertex}
Recall $G_-=\prod_{\epsilon_i=-}\Sp(\bbV_i)\times \prod_{\epsilon_i=+}\GL(V_i),$
and
\[
\bfN_-  = \bigoplus_{\epsilon_i=-}
\bbV_i\otimes \left(W_i\oplus \bigoplus_{j-i}V_j\oplus \Upsilon_i\right)
\oplus 
\bigoplus_{\epsilon_i=+}
W_i\otimes \bbV_i.\]
Let us choose a vertex $i$ such that $\epsilon(i)=-$.
We will compute the monopole operator associated to the first fundamental coweight $\epsilon_{i,1}$ of $\Sp(\bbV_i)$, which is a quasi-minuscule coweight. By Theorem \ref{thm:quasi}, we get the following explicit formula, where we use the notations from Section \ref{sec:quasi}.
\begin{prop}\label{prop:symp}
    For any polynomial $f(x)\in \mathbb{C}[x]$,
    \begin{align*}
        &z^*\circ (\iota_*)^{-1}\big(\tilde{p}_*(f(c_1(\mathcal{L}_i)+\frac{\hbar}{2})\cdot [\widetilde{\calR}_{G_-,\bfN_-,\leq \epsilon_{i,1}}])\big)\\
        =&(-1)^{\frac{\ww_i+\sum_{j-i}\vv_j}{2}}\sum_{r=1}^{\vv_i}f(x_{i,r}+\frac{\hbar}{2})\frac{(x_{i,r}+\frac{\hbar}{2})^{\dim \Upsilon_i}W_i(x_{i,r}+\frac{\hbar}{2})\prod_{j-i}V_j(x_{i,r}+\frac{\hbar}{2})}{(2x_{i,r}+\hbar)\bbV_{i,r}(x_{i,r})}(d_{i,r}-1)\\
        &+\cdots\in \mathcal{A}^F_\hbar(G_-,\bfN_-),
    \end{align*}
    where $\cdots$ denotes some polynomial in $H^*_{G_-\times\mathbb{C}^\times_\hbar}(\pt)$.
\end{prop}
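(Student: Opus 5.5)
We apply Theorem~\ref{thm:quasi} to the cotangent pair $(G_-,\bfN_-)$ with flavor group $F$, taking $\lambda=\epsilon_{i,1}$. Recall that Theorem~\ref{thm:quasi} continues to hold in the presence of $F$. For $\Sp(\bbV_i)$ the highest root is $\theta=2x_{i,1}$, and $\epsilon_{i,1}=\theta^\vee$, so $\lambda$ is quasi-minuscule. The other factors of $G_-$ do not interact with $\lambda$. We take $f(c_1(\mathcal{L}_i)+\tfrac{\hbar}{2})$ as the dressing. Since $c_1(\mathcal{L}_i)$ corresponds to $x_{i,1}$, this is the $W_\lambda$-invariant polynomial $f(x_{i,1}+\tfrac{\hbar}{2})$.

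With this input, Theorem~\ref{thm:quasi} gives the stated formula once we compute the single $W^\lambda$-term at the identity. We then transport that term along the orbit $W\lambda=\{\pm\epsilon_{i,r}\}$. This orbit is indexed by $r=1,\dots,\vv_i=2v_i$, using the conventions $x_{i,r+v_i}=-x_{i,r}$ and $d_{i,r+v_i}=d_{i,r}^{-1}$.

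Step 1: the tangent Euler class. By \eqref{equ:wts}, the roots $\alpha$ with $\langle\epsilon_{i,1},\alpha\rangle\ge1$ are $x_{i,1}\pm x_{i,s}$ for $s\neq1$, each with pairing $1$, and $2x_{i,1}$, with pairing $2$. The root $2x_{i,1}$ contributes the two weights $2x_{i,1}$ and $2x_{i,1}+\hbar$. Hence
\[
\Eu(T_\lambda\Gr_\lambda)=(2x_{i,1}+\hbar)\cdot 2x_{i,1}\prod_{s\ne1}(x_{i,1}^2-x_{i,s}^2)=(2x_{i,1}+\hbar)\,\bbV_{i,1}(x_{i,1}).
\]

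Step 2: the normal Euler class. By \eqref{equ:Euquo}, only $T$-weights $\chi$ of $\bfN_-$ with $\langle\epsilon_{i,1},\chi\rangle<0$ contribute. These lie in the summand $\bbV_i\otimes(W_i\oplus\bigoplus_{j-i}V_j\oplus\Upsilon_i)$ and have the form $-x_{i,1}+y$, where $y$ runs over the weights of $W_i$, of the $V_j$, and of $\Upsilon_i$. The weights of $\Upsilon_i$ are zero, because neither $T$ nor $F$ acts on $\Upsilon_i$. Every such pairing equals $-1$, so each weight contributes exactly one factor $-(x_{i,1}+\tfrac{\hbar}{2}-y)$. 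The remaining summands of $\bfN_-$ (the other vertices, and $W_k\otimes\bbV_k$ for orthogonal $k$) pair trivially with $\epsilon_{i,1}$ and contribute nothing. This gives
\[
(-1)^{w_i+\sum_{j-i}v_j+\dim\Upsilon_i}\,(x_{i,1}+\tfrac{\hbar}{2})^{\dim\Upsilon_i}\,W_i(x_{i,1}+\tfrac{\hbar}{2})\prod_{j-i}V_j(x_{i,1}+\tfrac{\hbar}{2}).
\]
Next we identify the sign. Write $\overline{n}\in\{0,1\}$ for the parity of $n$. Then
\[
\ww_i+\sum_{j-i}\vv_j=2w_i+\overline{\ww_i}+\sum_{j-i}(2v_j+\overline{\vv_j})=2\Big(w_i+\sum_{j-i}v_j+\dim\Upsilon_i\Big),
\]
where the last equality uses the definition of $\Upsilon_i$. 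So the sign is exactly $(-1)^{(\ww_i+\sum_{j-i}\vv_j)/2}$.

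Step 3: the Weyl orbit. Apply $w\in W^\lambda$. A permutation sends $x_{i,1}\mapsto x_{i,r}$. A sign change sends $x_{i,1}\mapsto -x_{i,r}=x_{i,r+v_i}$ and $d_{i,1}\mapsto d_{i,r}^{-1}$. The polynomials $W_i(\cdot)$ and $V_j(\cdot)$ are evaluated at $x_{i,1}+\tfrac{\hbar}{2}$ and do not involve $x_{i,\bullet}$ otherwise. The expression $\bbV_{i,1}(x_{i,1})=\bbV_i'(x_{i,1})$ transforms to $\bbV_{i,r}(x_{i,r})$ for all $2v_i$ indices, because $\bbV_i(z)$ is symmetric under $x_{i,s}\mapsto -x_{i,s}$. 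Substituting Steps 1 and 2 into Theorem~\ref{thm:quasi} and summing over the orbit gives the displayed sum plus a polynomial in $H^*_{G_-\times\mathbb{C}^\times_\hbar}(\pt)$.

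I expect the main obstacle to be the careful sign and $\tfrac{\hbar}{2}$-shift bookkeeping in Steps 2 and 3. In particular, one must check that the weight $-x_{i,1}+y$ gives the factor $y-x_{i,1}-\tfrac{\hbar}{2}$, which depends on $\bfN$ being scaled by weight $\tfrac12$. One must also confirm that the sign-change elements of $W^\lambda$ act compatibly on $\bbV_{i,r}(x_{i,r})$ and $(2x_{i,r}+\hbar)$. I would verify both directly in the rank-one case $\Sp_2$ before writing the general computation.
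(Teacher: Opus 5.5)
Your proposal is correct and is essentially the paper's proof. You apply Theorem~\ref{thm:quasi} to $(G_-,\bfN_-)$ with $\lambda=\epsilon_{i,1}=\theta^\vee$, and read off $\Eu(T_{\lambda}\Gr_\lambda)=(2x_{i,1}+\hbar)\bbV_{i,1}(x_{i,1})$ from \eqref{equ:wts}. You then get the normal Euler class from \eqref{equ:Euquo}, using only the summand $V_i^*\otimes\bigl(W_i\oplus\bigoplus_{j-i}V_j\oplus\Upsilon_i\bigr)$, and absorb the sign via $\dim\Upsilon_i+w_i+\sum_{j-i}v_j=\tfrac{1}{2}\bigl(\ww_i+\sum_{j-i}\vv_j\bigr)$.

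The only difference is cosmetic. The paper computes the $-\epsilon_{i,r}$ terms directly, obtaining $(2x_{i,r}-\hbar)\bbV_{i,r}(x_{i,r})$ and the contribution of $V_i\otimes(\cdots)$. You instead obtain these terms by Weyl transport, using that $\bbV_i'$ is odd, and the two computations agree.
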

\begin{proof}
    By Theorem \ref{thm:quasi}, we have
    \begin{align}\label{equ:ziota}
        &z^*\circ (\iota_*)^{-1}\big(\tilde{p}_*(f(c_1(\mathcal{L}_i)+\frac{\hbar}{2})\cdot [\widetilde{\calR}_{G_-,\bfN_-,\leq \epsilon_{i,1}}])\big)\\
        =&\sum_{w\in W^{\epsilon_{i,1}}}w\left(f(x_{i,1}+\frac{\hbar}{2})\cdot \frac{\Eu\big(z^{\epsilon_{i,1}}\bfN_-(\calO)/(z^{\epsilon_{i,1}}\bfN_-(\calO)\cap\bfN_-(\calO))\big)}{\Eu(T_{\epsilon_{i,1}}\Gr_{\epsilon_{i,1}})}(d_{i,1}-1)\right)
        +\cdots
        \notag\\
        =&\sum_{r=1}^{v_i}f(x_{i,r}+\frac{\hbar}{2})\cdot \frac{\Eu\big(z^{\epsilon_{i,r}}\bfN_-(\calO)/(z^{\epsilon_{i,r}}\bfN_-(\calO)\cap\bfN_-(\calO))\big)}{\Eu(T_{\epsilon_{i,r}}\Gr_{\epsilon_{i,1}})}(d_{i,r}-1)\notag\\
        &+\sum_{r=1}^{v_i}f(-x_{i,r}+\frac{\hbar}{2})\cdot \frac{\Eu\big(z^{-\epsilon_{i,r}}\bfN_-(\calO)/(z^{-\epsilon_{i,r}}\bfN_-(\calO)\cap\bfN_-(\calO))\big)}{\Eu(T_{-\epsilon_{i,r}}\Gr_{\epsilon_{i,1}})}(d_{i,r}^{-1}-1)+\cdots.\notag
    \end{align}
    By formula \eqref{equ:wts},
    \[\Eu(T_{\epsilon_{i,r}}\Gr_{\epsilon_{i,1}})=2x_{i,r}(2x_{i,r}+\hbar)\prod_{1\leq s\neq r\leq v_i}(x_{i,r}^2-x_{i,s}^2)=(2x_{i,r}+\hbar)\bbV_{i,r}(x_{i,r}),\]
    and \[\Eu(T_{-\epsilon_{i,r}}\Gr_{\epsilon_{i,1}})=-2x_{i,r}(-2x_{i,r}+\hbar)\prod_{1\leq s\neq r\leq v_i}(x_{i,r}^2-x_{i,s}^2)=(2x_{i,r}-\hbar)\bbV_{i,r}(x_{i,r}).\]
    Recall \[\bfN_- = \bigoplus_{\epsilon_k=-}\bbV_k\otimes \left(W_k\oplus \bigoplus_{j-k}V_j\oplus \Upsilon_k\right)\oplus \bigoplus_{\epsilon_k=+}W_k\otimes \bbV_k.\]
    By \eqref{equ:Euquo}, only the term $V_i^*\otimes (W_i\oplus\bigoplus_{j-i}V_j\oplus \Upsilon_i)\subset \bfN_-$ has contribution to \[\Eu\bigg(z^{\epsilon_{i,r}}\bfN_-(\calO)/(z^{\epsilon_{i,r}}\bfN_-(\calO)\cap \bfN_-(\calO))\bigg),\]
    and its contribution is 
    \begin{align*}
        &(-x_{i,r}-\frac{\hbar}{2})^{\dim \Upsilon_i}\prod_{1\leq a\leq w_i}(w_{i,a}-x_{i,r}-\frac{\hbar}{2})\prod_{j-i}\prod_{1\leq b\leq v_j}(x_{j,b}-x_{i,r}-\frac{\hbar}{2})\\
        =&(-x_{i,r}-\frac{\hbar}{2})^{\dim \Upsilon_i}(-1)^{w_i}W_i(x_{i,r}+\frac{\hbar}{2})\prod_{j-i}(-1)^{v_j}V_j(x_{i,r}+\frac{\hbar}{2})\\
        =&(-1)^{\frac{\ww_i+\sum_{j-i}\vv_j}{2}}(x_{i,r}+\frac{\hbar}{2})^{\dim \Upsilon_i}W_i(x_{i,r}+\frac{\hbar}{2})\prod_{j-i}V_j(x_{i,r}+\frac{\hbar}{2}),
    \end{align*}
    where the last equality follows from the fact that $\dim \Upsilon_i+w_i+\sum_{j-i}v_j=\frac{\ww_i+\sum_{j-i}\vv_j}{2}$.
    Similarly, only the term $V_i\otimes (W_i\oplus\bigoplus_{j-i}V_j\oplus \Upsilon_i)\subset \bfN_-$ has contribution to \[\Eu\bigg(z^{-\epsilon_{i,r}}\bfN_-(\calO)/(z^{-\epsilon_{i,r}}\bfN_-(\calO)\cap \bfN_-(\calO))\bigg),\]
    and its contribution is
    \begin{align*}
        &(x_{i,r}-\frac{\hbar}{2})^{\dim \Upsilon_i}\prod_{1\leq a\leq w_i}(w_{i,a}+x_{i,r}-\frac{\hbar}{2})\prod_{j-i}\prod_{1\leq b\leq v_j}(x_{j,b}+x_{i,r}-\frac{\hbar}{2})\\
        =&(x_{i,r}-\frac{\hbar}{2})^{\dim \Upsilon_i}(-1)^{w_i}W_i(-x_{i,r}+\frac{\hbar}{2})\prod_{j-i}(-1)^{v_j}V_j(-x_{i,r}+\frac{\hbar}{2})\\
        =&(-1)^{\frac{\ww_i+\sum_{j-i}\vv_j}{2}}(-x_{i,r}+\frac{\hbar}{2})^{\dim \Upsilon_i}W_i(-x_{i,r}+\frac{\hbar}{2})\prod_{j-i}V_j(-x_{i,r}+\frac{\hbar}{2}).
    \end{align*}
    Substituting these into \eqref{equ:ziota}, we get the desired formula.
\end{proof}

\subsubsection{Operators}
To summarize, we get the following explicit operators in the quantized Coulomb branch. 
\begin{prop}\label{prop:Bi}
    \begin{enumerate}
        \item For an orthogonal vertex $i$,
        \begin{align*}
        &B_i(u):=-\sum_{r=1}^{v_i} \frac{1}{u+\frac{\hbar}{2}+x_{i,r}}\frac{W_i(x_{i,r}+\frac{\hbar}{2})}{\bbV_{i,r}(x_{i,r})/2x_{i,r}}d_{i,r}\\
        &-\sum_{r=1}^{v_i} \frac{(-1)^{\vv_i+\sum_{k-i}v_k}}{u+\frac{\hbar}{2}-x_{i,r}}\frac{W_i(-x_{i,r}+\frac{\hbar}{2})\prod_{k-i}\bbV_k(x_{i,r}-\frac{\hbar}{2})}{\bbV_{i,r}(x_{i,r})/2x_{i,r}}d_{i,r}^{-1}\in \mathscr{A}^F_\hbar(G,\bfM)[[u^{-1}]]. 
    \end{align*}
    \item For a symplectic vertex $i$,
    \begin{align*}
        B_i(u):
        =&-\sum_{r=1}^{\vv_i}\frac{1}{u+\frac{\hbar}{2}+x_{i,r}}\frac{(x_{i,r}+\frac{\hbar}{2})^{\dim \Upsilon_i}W_i(x_{i,r}+\frac{\hbar}{2})\prod_{j-i}V_j(x_{i,r}+\frac{\hbar}{2})}{(x_{i,r}+\frac{\hbar}{2})\bbV_{i,r}(x_{i,r})}d_{i,r}\\
        &-\delta_{\dim \Upsilon_i,0}\frac{W_i(0)\prod_{j-i}V_j(0)}{u\cdot \bbV_i(\frac{\hbar}{2})}\\
        =&-\sum_{r=1}^{v_i}\frac{1}{u+\frac{\hbar}{2}+x_{i,r}}\frac{(x_{i,r}+\frac{\hbar}{2})^{\dim \Upsilon_i}W_i(x_{i,r}+\frac{\hbar}{2})\prod_{j-i}V_j(x_{i,r}+\frac{\hbar}{2})}{(x_{i,r}+\frac{\hbar}{2})\bbV_{i,r}(x_{i,r})}d_{i,r}\\
        &-\sum_{r=1}^{v_i}\frac{1}{u+\frac{\hbar}{2}-x_{i,r}}\frac{(-x_{i,r}+\frac{\hbar}{2})^{\dim \Upsilon_i}W_i(-x_{i,r}+\frac{\hbar}{2})\prod_{j-i}V_j(-x_{i,r}+\frac{\hbar}{2})}{(x_{i,r}-\frac{\hbar}{2})\bbV_{i,r}(x_{i,r})}d_{i,r}^{-1}\\
        &-\delta_{\dim \Upsilon_i,0}\frac{W_i(0)\prod_{j-i}V_j(0)}{u\cdot \bbV_i(\frac{\hbar}{2})}\in 
        \mathscr{A}^F_\hbar(G,\bfM)[[u^{-1}]].
    \end{align*}
    \end{enumerate}
\end{prop}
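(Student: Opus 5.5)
We will prove the statement by expanding $B_i(u)$ in $u^{-1}$ and checking that each coefficient lies in $\mathscr{A}^F_\hbar(G,\bfM)$. Using
\[
\frac{1}{u+\frac{\hbar}{2}\pm x_{i,r}}=\sum_{n\ge 0}(-1)^n\Bigl(\pm x_{i,r}+\tfrac{\hbar}{2}\Bigr)^n u^{-n-1},
\]
the coefficient of $u^{-n-1}$ is a combination of the operators in Propositions~\ref{prop:orth} and \ref{prop:symp}, taken with the monomial $f(x)=(-1)^{n+1}(x+\tfrac{\hbar}{2})^n$, up to an overall sign and possibly a Gelfand--Tsetlin correction.

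\emph{Orthogonal vertex.} We apply Proposition~\ref{prop:orth} with $f(x)=-(-1)^n(x+\frac{\hbar}{2})^n$ and multiply by the global sign $(-1)^{w_i}$. The coefficient of $d_{i,r}$ then matches the $u^{-n-1}$-coefficient of the first sum in $B_i(u)$ directly. For the $d_{i,r}^{-1}$ term, Proposition~\ref{prop:orth} gives the sign $(-1)^{\sum_{k-i}v_k}$ in the even case and $-(-1)^{\sum_{k-i}v_k}$ in the odd case. Both are exactly $(-1)^{\vv_i+\sum_{k-i}v_k}$, since $(-1)^{\vv_i}=(-1)^{\overline{\vv_i}}$. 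Membership in $\mathscr{A}^F_\hbar(G,\bfM)$ is already part of Proposition~\ref{prop:orth}, and multiplying by a scalar sign preserves it, so this case is only bookkeeping.

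\emph{Symplectic vertex.} By Proposition~\ref{prop:symp} with $f(x)=-(-1)^n x^n$, and after multiplying by $(-1)^{(\ww_i+\sum_{j-i}\vv_j)/2}$, the element
\[
\sum_{r=1}^{\vv_i} c_{n,r}\,(d_{i,r}-1)+P_n
\]
lies in $\mathcal{A}^F_\hbar(G_-,\bfN_-)$, with $P_n\in H^*_{G_-\times\mathbb{C}^\times_\hbar}(\pt)$. Here $c_{n,r}$ is the $u^{-n-1}$-coefficient of the $r$-th term of $B_i(u)$; the factor $(2x_{i,r}+\hbar)=2(x_{i,r}+\frac{\hbar}{2})$ absorbs into normalizations. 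The Gelfand--Tsetlin elements $P_n$ lie in $\mathscr{A}$, because they are $W$-invariant polynomials and hence lie in both localized cotangent algebras. Subtracting them, we obtain that
\[
D_n:=\sum_r c_{n,r}(d_{i,r}-1)
\]
lies in $\mathcal{A}^F_\hbar(G_-,\bfN_-)\cap\Diff_\hbar(T)$ up to such polynomials.

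Next we apply Theorem~\ref{thm:noncotsub} with $i=1$ (so $TG_1\supset G_-$). The required $W_1$-invariance of $D_n$ is clear. $W_2$ acts only on the orthogonal variables, through $\prod_{j-i}V_j$ and $W_i$. Since $d_{i,r}$ involves no orthogonal coweight, the twisted action in \eqref{equ:twistedw2} introduces no sign, and we must check that $\prod_{j-i}V_j(x_{i,r}+\frac{\hbar}{2})$ is $W_2$-invariant. This is the step I expect to be the main obstacle. The factor $\prod_j V_j$ is not invariant under the sign changes $x_{j,b}\mapsto -x_{j,b}$, so the honest invariant is obtained only after restoring the $\Upsilon_i$ and the twisted identification $\sigma$. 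I would handle it by redoing the sign analysis of Proposition~\ref{prop:orth}, using \eqref{equ:sigma} and the complement $\underline{\bfN}$ of $\bfN_0$. Concretely, this means checking that the difference between $V_j(x)$ and $V_j(-x)$ is exactly compensated by $\sigma$, i.e.\ that the relevant Euler classes for $\bfN_-$ versus $\bfN_+$ differ precisely by the sign $(-1)^{\sum_{k-i}v_k}$ already seen there.

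It remains to identify $-\sum_r c_{n,r}$, the constant term of $D_n$. By Lagrange interpolation/residues, $\sum_r c_{n,r}$ equals the negative of the $u^{-n-1}$-coefficient of the residue at infinity and at $x=-\frac{\hbar}{2}$ of
\[
\frac{x^{\dim\Upsilon_i-1}W_i(x)\prod_{j-i} V_j(x)}{(u+x)\,\bbV_i(x-\frac{\hbar}{2})}.
\]
We must show this agrees with the extra term $-\delta_{\dim\Upsilon_i,0}\frac{W_i(0)\prod_j V_j(0)}{u\,\bbV_i(\hbar/2)}$, modulo $W$-invariant polynomials, which again lie in $\mathscr{A}$. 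The pole at $x=0$ occurs only when $\dim\Upsilon_i=0$, and it produces exactly the $\delta$-term; the remaining residues are polynomial and symmetric. Finally, the second displayed form of $B_i(u)$ follows by splitting $r\le v_i$ and $r>v_i$, using $x_{i,r+v_i}=-x_{i,r}$ and $\bbV_{i,r+v_i}(-x_{i,r})=\bbV_{i,r}(x_{i,r})$ up to sign.
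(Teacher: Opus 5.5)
Your orthogonal-vertex case is correct and is the paper's argument: Proposition~\ref{prop:orth} with $f(x)=-(-x-\frac{\hbar}{2})^n$, an overall sign $(-1)^{w_i}$, and $(-1)^{\vv_i}$ absorbing the even/odd discrepancy. Your symplectic-vertex outline also has the right shape: Proposition~\ref{prop:symp} with a monomial $f$, a residue computation whose pole at $0$ gives the $\delta$-term when $\dim\Upsilon_i=0$, then Theorem~\ref{thm:noncotsub}.

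The gap is in the invariance step, which is what membership in $\mathscr{A}^F_\hbar(G,\bfM)$ rests on. You plan to show that $\prod_{j-i}V_j(x_{i,r}+\frac{\hbar}{2})$ is $W_2$-invariant, by showing that the $\bfN_-$- and $\bfN_+$-Euler classes differ by the sign $(-1)^{\sum_{k-i}v_k}$. Neither holds. That product is not invariant under the usual sign changes of the $x_{j,b}$. The Euler classes do not differ by a sign either: for $\lambda=\epsilon_{i,r}$ with $1\le r\le v_i$, the $\bfN_+$-Euler class is $1$.

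What you are missing is that \eqref{equ:twistedw2} already settles this. The twisted action of $w\in W_2$ sends $f\cdot\Eu(z^\lambda\bfN_-(\calO)/(z^\lambda\bfN_-(\calO)\cap\bfN_-(\calO)))d_\lambda$ to $\pm(wf)\Eu(z^{w\lambda}\bfN_-(\calO)/(z^{w\lambda}\bfN_-(\calO)\cap\bfN_-(\calO)))d_{w\lambda}$, so $w$ acts only on the cofactor $f$. By the computation in Proposition~\ref{prop:symp}, each $d_{i,r}$-term of $B_i(u)$ is, up to a constant, of this form, with cofactor depending only on $u$, $\hbar$ and the $x_{i,\bullet}$. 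Since $w\epsilon_{i,r}=\epsilon_{i,r}$ and $(-1)^{a(\lambda)-a(w\lambda)}=1$, each such term is fixed. No compensation between $V_j(x)$ and $V_j(-x)$ is involved.

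Your bookkeeping of the constant term is also wrong. The corrections $P_n$ and the residue-at-infinity polynomials lie in $H^*_{G_-\times\mathbb{C}^\times_\hbar}(\pt)$. In the variables of an orthogonal vertex $j$, such elements are only $S_{v_j}$-invariant, not $W(\SO(\bbV_j))$-invariant. The residue-at-infinity terms in fact involve coefficients of $\prod_{j-i}V_j$, for example $\sum_b x_{j,b}$. So these polynomials are not in $\mathscr{A}^F_\hbar(G,\bfM)$ in general.

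For the same reason $D_n$ is not $W_2$-invariant. Its $d_0$-part $-\sum_r c_{n,r}$ differs from the invariant $\delta$-term by such a polynomial, and on $d_0$-terms the twisted action is the usual one. Hence Theorem~\ref{thm:noncotsub} cannot be applied to $D_n$.

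The correct order, as in the paper, is as follows. First absorb all polynomial corrections into the Gelfand--Tsetlin subalgebra, to get $B_{i,n}\in\mathcal{A}^F_\hbar(G_-,\bfN_-)$. Then check $W$-invariance of $B_{i,n}$ itself, term by term, and apply Theorem~\ref{thm:noncotsub} to it.

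This requires one check your proposal omits: the $\delta$-term must be $W_2$-invariant. It is, because $\dim\Upsilon_i=0$ forces $\vv_j$ to be even for every $j-i$. Then $V_j(0)=(-1)^{v_j}\prod_s x_{j,s}$ is invariant under $W(\SO(\bbV_j))$.
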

\begin{proof}
    The first one follows from Proposition \ref{prop:orth}(1) by taking $f(x)=-(-x-\frac{\hbar}{2})^s$, multiplying by $u^{-s-1}$, and summing over all $s\geq 0$. 
    
    Now let us prove the second one.
    Notice that if $\dim \Upsilon_i\geq 1$,
    \[\sum_{r=1}^{\vv_i}f(x_{i,r}+\frac{\hbar}{2})\frac{(x_{i,r}+\frac{\hbar}{2})^{\dim \Upsilon_i}W_i(x_{i,r}+\frac{\hbar}{2})\prod_{j-i}V_j(x_{i,r}+\frac{\hbar}{2})}{(x_{i,r}+\frac{\hbar}{2})\bbV_{i,r}(x_{i,r})}\]
    belongs to $H^*_{G_-\times\mathbb{C}^\times_\hbar}(\pt)$ by the same argument at the end of the proof for Theorem \ref{thm:quasi}. If $\dim \Upsilon_i=0$,
    \begin{align*}
    &\sum_{r=1}^{\vv_i}f(x_{i,r}+\frac{\hbar}{2})\frac{W_i(x_{i,r}+\frac{\hbar}{2})\prod_{j-i}V_j(x_{i,r}+\frac{\hbar}{2})}{(x_{i,r}+\frac{\hbar}{2})\bbV_{i,r}(x_{i,r})}\\
    =&f(0)W_i(0)\prod_{j-i}V_j(0)\sum_{r=1}^{\vv_i}\frac{1}{(x_{i,r}+\frac{\hbar}{2})\bbV_{i,r}(x_{i,r})}+\cdots\\
    =&f(0)W_i(0)\prod_{j-i}V_j(0)\frac{1}{\prod_{r=1}^{\vv_i}(x_{i,r}+\frac{\hbar}{2})}\sum_{r=1}^{\vv_i}\frac{\prod_{1\leq s\neq r\leq \vv_i}(x_{i,s}+\frac{\hbar}{2})}{\bbV_{i,r}(x_{i,r})}+\cdots\\
    =&-\frac{f(0)W_i(0)\prod_{j-i}V_j(0)}{\bbV_i(\frac{\hbar}{2})}+\cdots.
    \end{align*}
    Here $\cdots$ denotes some term in $H^*_{G_-\times\mathbb{C}^\times_\hbar}(\pt)$, and the last equality follows from
    \begin{align*}
    \sum_{r=1}^{\vv_i}\frac{\prod_{1\leq s\neq r\leq \vv_i}(x_{i,s}+\frac{\hbar}{2})}{\bbV_{i,r}(x_{i,r})}
    =-1.
    \end{align*}
    Therefore, by the same argument as the first one, we see that 
    \[B_i(u)\in \mathcal{A}_\hbar^F(G_-,\bfN_-)[[u^{-1}]].\]
    
    It is obvious that $B_i(u)$ is invariant under the usual action of the Weyl group of $G_1$ in \eqref{equ:G1G2}. Let us show that it is also invariant under the twisted action of the Weyl group of $G_2$. Let $w$ be an element in the Weyl group of $G_2$. Then it fixes each $\epsilon_{i,r}$ and $x_{i,r}$. By \eqref{equ:twistedw2} and the proof in Proposition \ref{prop:symp}, $w$ fixes the term 
    \[\frac{(x_{i,r}+\frac{\hbar}{2})^{\dim \Upsilon_i}W_i(x_{i,r}+\frac{\hbar}{2})\prod_{j-i}V_j(x_{i,r}+\frac{\hbar}{2})}{(x_{i,r}+\frac{\hbar}{2})\bbV_{i,r}(x_{i,r})}d_{i,r}\]
    in the summand of $B_i(u)$. Now let us look at the constant term of $B_i(u)$:
    \[\delta_{\dim \Upsilon_i,0}\frac{W_i(0)\prod_{j-i}V_j(0)}{u\cdot \bbV_i(\frac{\hbar}{2})}.\]
    We only need to consider the case $\dim\Upsilon_i=0$, which implies that $\overline{\vv_j}=0$ for any $j$ adjacent to $i$. Hence, $V_j(0)=(-1)^{v_j}\prod_{s=1}^{v_j}x_{j,s}$ is invariant under the Weyl group action of $\SO(\bbV_j)$. Therefore, we get that $B_i(u)\in \mathscr{A}^F_\hbar(G,\bfM)[[u^{-1}]]$ by Theorem \ref{thm:noncotsub}.
\end{proof}
We expand $B_i(u)$ as a Laurent series in $u^{-1}$, and let $B_{i,r}$ denote the coefficient of $u^{-r-1}$ in $B_i(u)$.
\begin{definition}
    \begin{enumerate}
        \item For an orthogonal vertex, define 
        \[H_i(u):=(-1)^{\vv_i+\sum_{k-i}v_k}\frac{(4u^2-\hbar^2)W_i(u)W_i(-u)\prod_{k-i}\bbV_k(u)}{\bbV_i(u-\frac{\hbar}{2})\bbV_i(u+\frac{\hbar}{2})}\in \mathscr{A}^F_\hbar(G,\bfM)(\!(u^{-1})\!).\]
        \item For a symplectic vertex, define \[H_i(u):=\frac{u^{\dim \Upsilon_i}W_i(u)\prod_{j-i} V_j(u)(-u)^{\dim \Upsilon_i}W_i(-u)\prod_{j-i} V_j(-u)}{u^2\bbV_i(u-\frac{\hbar}{2})\bbV_i(u+\frac{\hbar}{2})}\in \mathscr{A}^F_\hbar(G,\bfM)(\!(u^{-1})\!).\] 
    \end{enumerate}
\end{definition}
Since these functions are invariant under the action of the Weyl group of $G$, they belong to $\mathscr{A}^F_\hbar(G,\bfM)(\!(u^{-1})\!)$.
\begin{remark}\label{rem:pole}
    If $\vv_i$ is odd, then the denominator in $H_i(u)$ will have $(u-\frac{\hbar}{2})(u+\frac{\hbar}{2})$, which will cancel the corresponding term in the numerator.
\end{remark}

Each $H_i(u)$ expands as a Laurent series in $u^{-1}$, and let $H_{i,r}$ denote the coefficient of $u^{-r-1}$ in $H_i(u)$.
By a direct computation, we get the following properties of $H_i(u)$.
\begin{lemma}\label{lem:Hicond}
    The $H_i(u)$ satisfies $H_i(u)=H_i(-u)$, and $H_{i,r}=0$ if $r< 2\vv_i-\sum_{k-i}\vv_k-\ww_i-3$ for an orthogonal vertex, while $H_{i,r}=0$ if $r< 2\vv_i-\sum_{k-i}\vv_k-\ww_i+1$ for a symplectic vertex.
\end{lemma}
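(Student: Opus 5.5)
Both claims will be read off from the explicit rational expressions for $H_i(u)$ in the preceding Definition, treating them as rational functions of $u$ with coefficients in $H^*_{T\times F}(\pt)[\hbar]$. For evenness I will check how each factor transforms under $u\mapsto -u$. For the vanishing of the low coefficients I will compute the degree $D$ in $u$ of each expression. If $H_i(u)$ is a rational function of degree $D$, its expansion in $u^{-1}$ is $\sum_{n\le D} c_n u^n$. So the coefficient $H_{i,r}$ of $u^{-r-1}$ vanishes as soon as $-r-1>D$, that is, for $r<-D-1$. It therefore suffices to show that $-D-1$ equals the stated bound in each case.

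\emph{Evenness.} The factor $W_i(u)W_i(-u)$ is manifestly even. The polynomial $\bbV_k(z)=z^{\overline{\vv_k}}\prod_s(z^2-x_{k,s}^2)$ satisfies $\bbV_k(-z)=(-1)^{\vv_k}\bbV_k(z)$. Hence
\[
\bbV_i(-u-\tfrac{\hbar}{2})\,\bbV_i(-u+\tfrac{\hbar}{2})=(-1)^{2\vv_i}\,\bbV_i(u+\tfrac{\hbar}{2})\,\bbV_i(u-\tfrac{\hbar}{2}),
\]
so the denominators are even. In the orthogonal case the numerator also contains $(4u^2-\hbar^2)$, which is even, and $\prod_{k-i}\bbV_k(u)$, which picks up the sign $(-1)^{\sum_{k-i}\vv_k}$. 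Every neighbour $k$ of an orthogonal vertex is symplectic, so each $\vv_k$ is even and this sign is trivial. In the symplectic case the numerator has the form $g(u)g(-u)$ with $g(u)=u^{\dim\Upsilon_i}W_i(u)\prod_{j-i}V_j(u)$, and $u^2$ is even, so $H_i(u)=H_i(-u)$ holds there as well. The cancellation of Remark~\ref{rem:pole} does not affect either argument.

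\emph{Degree count.} At an orthogonal vertex $\bbW_i$ is symplectic, so $\ww_i=2w_i$ and $W_i(u)W_i(-u)$ has degree $\ww_i$. Each $\bbV_k$ has degree $\vv_k$, and the denominator has degree $2\vv_i$. This gives
\[
D=2+\ww_i+\sum_{k-i}\vv_k-2\vv_i,
\]
so $-D-1=2\vv_i-\sum_{k-i}\vv_k-\ww_i-3$, as claimed. At a symplectic vertex the numerator $g(u)g(-u)$ has degree $2\dim\Upsilon_i+2w_i+2\sum_{j-i}v_j$. By the defining relation $\mathbb{C}^{\overline{\ww_i}}\oplus\bigoplus_{j-i}\mathbb{C}^{\overline{\vv_j}}\cong\Upsilon_i\oplus\Upsilon_i^*$ we have $2\dim\Upsilon_i=\overline{\ww_i}+\sum_{j-i}\overline{\vv_j}$, so this degree equals $\ww_i+\sum_{j-i}\vv_j$. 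The denominator $u^2\bbV_i(u-\tfrac{\hbar}{2})\bbV_i(u+\tfrac{\hbar}{2})$ has degree $2+2\vv_i$. Thus $D=\ww_i+\sum_{j-i}\vv_j-2\vv_i-2$, and $-D-1=2\vv_i-\sum\vv_j-\ww_i+1$, as claimed.

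There is no real obstacle here; the only point needing care is the parity bookkeeping. This means using that adjacent vertices have opposite type, so that neighbours of an orthogonal vertex have even $\vv_k$, and using the identity for $2\dim\Upsilon_i$, which relies on the anomaly cancellation condition of Proposition~\ref{prop:ano}.
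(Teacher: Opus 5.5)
Your proposal is correct and is the ``direct computation'' the paper has in mind, which it does not write out. Evenness follows from $\bbV_k(-z)=(-1)^{\vv_k}\bbV_k(z)$ together with the fact that adjacent vertices have opposite type. The vanishing bounds follow from the degree count in $u$, using $\ww_i=2w_i$ at orthogonal vertices and $2\dim\Upsilon_i=\overline{\ww_i}+\sum_{j-i}\overline{\vv_j}$ at symplectic ones.
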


\section{Drinfeld new presentation for the shifted twisted Yangian}\label{sec:yangian}
Now let us fix the underlying graph of the orthosymplectic quiver $Q$ to be the ADE Dynkin diagrams. Let $\mathfrak{g}$ be the corresponding Lie algebra with Cartan matrix $C = (c_{ij})_{1\leq i,j \leq n}$. Let $\mu$ be an even coweight, i.e., $\langle\mu, \alpha_i\rangle\in 2\mathbb{Z}$ for any simple root $\alpha_i$. We recall the Drinfeld new presentations for the twisted Yangian and affine iquantum groups of split type.

\subsection{Twisted Yangian of split type}
We write $[x,y]:=xy-yx$ and $\{x,y\}:=xy+yx$.
\begin{definition} (\cite[Def. 2.2]{LWW})
\label{shiftedTY}
The shifted twisted Yangian ${}^{\iota}\mathbf{Y}_{\mu}(\mathfrak{g})$ of split type is the $\mathbb{C}(\hbar)$-algebra generated by 
$h_{i,r}$ and $b_{i,s}$ for $i\in I, r\geq -\langle \mu, \alpha_i\rangle-1$ and $s\geq 0$, subject to the following relations:
\begin{align}
\label{eq:hcomm}
    & {\left[h_{i, r}, h_{j, s}\right]=0, \qquad 
    h_{i,2s} = 0}, 
        \\
\label{eq:hbcomm}
    & {\left[h_{i, r+2}, b_{j, s}\right]-\left[h_{i, r}, b_{j, s+2}\right]} 
    =c_{i j} \hbar\left\{h_{i, r}, b_{j, s+1}\right\}
    +\frac{c_{i j}^2}{4} \hbar^2\left[h_{i, r}, b_{j, s}\right], 
        \\
\label{eq:bcomm}
    & {\left[b_{i, r+1}, b_{j, s}\right]-\left[b_{i, r}, b_{j, s+1}\right]=\frac{c_{i j}}{2}\hbar \left\{b_{i, r}, b_{j, s}\right\}-2 \delta_{ i, j}(-1)^r h_{j, r+s+1}},
\end{align}
and the Serre-type relations  
\begin{itemize}
    \item when $c_{i j}=0$, we have 
\begin{equation}\label{eq:commSerre}
    \left[b_{i, r}, b_{j, s}\right]=0;
\end{equation}
\item when $c_{i, j}=-1$, we have 
\begin{equation}\label{eq:iSerre}
    \operatorname{Sym}_{k_1, k_2}\left[b_{i, k_1},\left[b_{i, k_2}, b_{j, r}\right]\right]= (-1)^{k_1} [h_{i,k_1+k_2+1},b_{j,r-1}].
\end{equation}
\end{itemize}
Here we have adopted the convention that 
\begin{equation}\label{eq:vanishing}
\mbox{$h_{i,r}=0$ if $r<-\langle \mu,\alpha_i\rangle-1$}.
\end{equation}
Moreover, if $r=0$ and $c_{i,j}=-1$, we use the following convention in \eqref{eq:iSerre}:
\[
[h_{i,r},b_{j,-1}]
:=
\sum_{p\geq0}2^{-2p}\hbar^{2p}
\left(
[h_{i,r-2p-2},b_{j,1}]
-
\hbar\{h_{i,r-2p-2},b_{j,0}\}
\right).
\]
\end{definition}

\begin{remark}
Our convention in Def.~\ref{shiftedTY} agrees with that of \cite{LZ24}.
It differs from the convention in \cite{LWW} only by a shift of the
indices: our \(h_{i,r}\) and \(b_{i,r}\) correspond to their
\(H_i^{(r+1)}\) and \(B_i^{(r+1)}\), respectively. Thus the exceptional
mode \(b_{j,-1}\) here corresponds to \(B_j^{(0)}\) in the notation of
\cite{LWW}.
\end{remark}

We define the following generating functions
\begin{align}
\label{generatingfunction}
h_i(z) & = \hbar\sum_{r\in \mathbb{Z}}h_{i,r}z^{-r-1},&
h_i^\circ(z) & = \hbar\sum_{r\geq 0}h_{i,r}z^{-r-1},&
b_i(z) & = \hbar\sum_{s\geq 0}b_{i,s}z^{-s-1}.
\end{align}
Following \cite[\S 3.3]{LZ24}, we can rewrite the relations as 
\begin{align}
\label{eq:hcomm(z)}&
{\left[h_i(u), h_j(v)\right]=0, \qquad h_{i}(u)=h_i(-u),}\\
\label{eq:hbcomm(z)}
&\left(u^2-v^2\right)\left[h_i(u), b_j(v)\right]= c_{i j} \hbar v\left\{h_i(u), b_j(v)\right\} +\frac{c_{i j}^2}{4} \hbar^2\left[h_i(u), b_j(v)\right]
\\\notag&\qquad 
-\hbar\left[h_i(u), b_{j, 1}\right] 
-\hbar v\left[h_i(u), b_{j, 0}\right]-c_{i j} \hbar^2\left\{h_i(u), b_{j, 0}\right\},\\
\label{eq:bcomm(z)}&
(u-v)\left[b_i(u), b_j(v)\right]=\frac{c_{i j}}{2} \hbar\left\{b_i(u), b_j(v)\right\}+\hbar\left(\left[b_{i, 0}, b_j(v)\right]-\left[b_i(u), b_{j, 0}\right]\right) \\
\notag&\qquad -\delta_{ i, j} \hbar\left(\frac{2 u}{u+v} h_i^{\circ}(u)+\frac{2 v}{u+v} h_j^{\circ}(v)\right),\\
\label{eq:commSerre(z)}&
\left[b_i(u), b_j(v)\right]=0, \textit{ if } c_{ij}=0,
\end{align} 
and the following Serre relation when $c_{ij}=-1$:
\begin{align}\label{eq:usualSerre(z)}
    &\operatorname{Sym}_{u_1, u_2}\left[b_i(u_1),\left[b_i(u_2), b_j(v)\right]\right]\\
    =&\frac{4\hbar}{u_1+u_2}\operatorname{Sym}_{u_1, u_2}\frac{u_2(v-\hbar)h_i(u_2)b_j(v)-u_2(v+\hbar)b_j(v)h_i(u_2)}{4u_2^2-\hbar^2}.\notag
\end{align}

\begin{lemma}\label{lem:serre}
Assume that $c_{ij}=-1$. Then \eqref{eq:usualSerre(z)} is equivalent to
\begin{equation}\label{eq:Serre-special-consequence}
\begin{aligned}
[b_i(u),[b_{i,0},b_{j,0}]]
+
[b_{i,0},[b_i(u),b_{j,0}]]
 =
\left(
b_j(-u+\hbar/2)h_i(u)
-
b_j(u+\hbar/2)h_i(u)
\right)^\circ.
\end{aligned}
\end{equation}
\end{lemma}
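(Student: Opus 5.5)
The plan is to read the statement as an equivalence modulo the other defining relations \eqref{eq:hcomm(z)}--\eqref{eq:commSerre(z)}. The direction \eqref{eq:usualSerre(z)}$\Rightarrow$\eqref{eq:Serre-special-consequence} is a coefficient extraction. The converse is a "generation from lowest modes" argument of Levendorskii type, as in \cite{LZ24}.

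\emph{Step 1: the forward direction.} In \eqref{eq:usualSerre(z)} I will take the coefficient of $v^{-1}$, which replaces $b_j(v)$ by $\hbar b_{j,0}$. I then take the coefficient of $u_2^{-1}$, which replaces $b_i(u_2)$ by $\hbar b_{i,0}$. Renaming $u_1=u$, the left-hand side becomes $\hbar^3\bigl([b_i(u),[b_{i,0},b_{j,0}]]+[b_{i,0},[b_i(u),b_{j,0}]]\bigr)$.

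On the right-hand side, I expand $\frac{4\hbar}{u_1+u_2}$ and $\frac{u_2}{4u_2^2-\hbar^2}$ in negative powers of $u_2$. After symmetrization, the terms in which $h_i(u_2)$ appears contribute only finitely many low modes. The terms in which $h_i(u_1)$ appears are rewritten using the identity $b_j(v)$-coefficient $\mapsto b_{j,0}$. The resulting expression should then be resummed into the form $b_j(\mp u+\hbar/2)h_i(u)$. Here the shifts $\pm\hbar/2$ come from $\frac{4u}{4u^2-\hbar^2}=\frac{1}{u-\hbar/2}+\frac{1}{u+\hbar/2}$, and $(\cdot)^\circ$ records the truncation to nonnegative modes that the $u_2^{-1}$-extraction produces. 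This step is routine partial fraction bookkeeping, and I will simply check it against the mode form \eqref{eq:iSerre} with $k_2=0$, $r=0$.

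\emph{Step 2: the converse direction.} Let $S(k_1,k_2,r)$ denote the mode relation \eqref{eq:iSerre}, and let $\mathcal S$ be the set of triples $(k_1,k_2,r)$ for which it holds. Relation \eqref{eq:Serre-special-consequence} is exactly $S(k,0,0)$ for all $k\ge0$, symmetrized. I will show that $\mathcal S$ is closed under raising $k_2$ and raising $r$.

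For this I need "shift operators." By \eqref{eq:hbcomm}, suitable linear combinations $\tilde h_{a}$ of the lowest nonvanishing odd modes of $h_a$, corrected by lower modes, satisfy
$$[\tilde h_a,b_{c,s}]=c_{ac}\,b_{c,s+1}+(\text{lower-order terms in }s).$$
Since $c_{ii}=2$, $c_{ij}=-1$ and $c_{jj}=2$, the combinations $2\tilde h_i+\tilde h_j$ and $\tilde h_i+2\tilde h_j$ act on $b_{i,\bullet}$ and $b_{j,\bullet}$ with independent weights: the first gives $(3,0)$ and the second gives $(0,3)$. I will then apply $\operatorname{ad}$ of such a combination to $S(k_1,k_2,r)$, using the derivation property on the left-hand side. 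On the right-hand side, I use \eqref{eq:hcomm} together with \eqref{eq:hbcomm} for $[h,b_{j,r-1}]$, including the convention for $b_{j,-1}$. This should show that $S$ propagates from $(k,0,0)$ to all $(k_1,k_2,r)$, with induction on $k_2+r$. Finally, I convert back to generating functions following \cite[\S3.3]{LZ24}.

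\emph{Main obstacle.} The hardest point is Step 2 in the shifted setting. There, $h_{i,r}$ vanishes below $-\langle\mu,\alpha_i\rangle-1$, so the "$h_{i,1}$" shift operator must be replaced by the lowest available odd mode, corrected by its square or inverse series. I must also verify that the right-hand side of \eqref{eq:iSerre} transforms exactly as the left-hand side does under this operator; this includes the exceptional term $[h_{i,r},b_{j,-1}]$.

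I would first attempt this verification at the level of generating functions. Concretely, I would apply $[h_k(w),\,\cdot\,]$ to \eqref{eq:usualSerre(z)} and use \eqref{eq:hbcomm(z)} to show that both sides are multiplied by the same rational factors in $w,u_1,u_2,v$. That argument avoids choosing explicit shift operators, and it would then reduce the converse to the lowest coefficient, which is \eqref{eq:Serre-special-consequence}.
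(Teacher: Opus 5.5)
Your Step~2 matches the paper's converse in substance. The paper takes only the $u^{-1}$-coefficient of \eqref{eq:Serre-special-consequence} and identifies it, via the convention for $[h_{i,1},b_{j,-1}]$, with the case $k_1=k_2=r=0$ of \eqref{eq:iSerre}. It then cites the recursive argument of \cite{LZ24}. Starting instead from all $S(k,0,0)$ and raising $k_2$ and $r$ with shift operators is a legitimate variant of that recursion.

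The genuine gap is that you never relate the right-hand side of \eqref{eq:Serre-special-consequence}, where $b_j$ is evaluated at $\mp u+\hbar/2$, to the right-hand side of \eqref{eq:usualSerre(z)} or \eqref{eq:iSerre}. In Step~1 you call this ``routine partial fraction bookkeeping,'' and in Step~2 you assume it when you say \eqref{eq:Serre-special-consequence} ``is exactly $S(k,0,0)$.''

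It is not a partial fraction matter. Taking $[v^{-1}]$ on the right of \eqref{eq:usualSerre(z)} does not replace $b_j(v)$ by $\hbar b_{j,0}$. Because of the factors $v\pm\hbar$, you get $\hbar b_{j,1}\pm\hbar^2 b_{j,0}$, and after $[u_2^{-1}]$ the right-hand side is $F(u)^\circ$ with
\[
F(u)=\frac{4u}{4u^2-\hbar^2}\bigl([h_i(u),b_{j,1}]-\hbar\{h_i(u),b_{j,0}\}\bigr).
\]
This involves only $b_{j,0}$ and $b_{j,1}$, whereas $b_j(\mp u+\hbar/2)h_i(u)$ involves every mode $b_{j,s}$. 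The two can agree only modulo \eqref{eq:hbcomm(z)}.

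The missing idea is to solve \eqref{eq:hbcomm(z)}, with $c_{ij}=-1$, for the boundary terms:
\[
\hbar\bigl([h_i(u),b_{j,1}]-\hbar\{h_i(u),b_{j,0}\}\bigr)=-\bigl(u^2-(v-\tfrac{\hbar}{2})^2\bigr)h_i(u)b_j(v)+\bigl(u^2-(v+\tfrac{\hbar}{2})^2\bigr)b_j(v)h_i(u)-\hbar v[h_i(u),b_{j,0}].
\]
Then specialize at $v=-u+\hbar/2$ and at $v=u+\hbar/2$, where the first coefficient vanishes. Writing $X=[h_i(u),b_{j,1}]-\hbar\{h_i(u),b_{j,0}\}$, this gives
\[
X=(2u-\hbar)b_j(-u+\hbar/2)h_i(u)+(u-\hbar/2)[h_i(u),b_{j,0}],
\]
\[
X=-(2u+\hbar)b_j(u+\hbar/2)h_i(u)-(u+\hbar/2)[h_i(u),b_{j,0}].
\]
Dividing by $2u\mp\hbar$ and adding cancels the $[h_i(u),b_{j,0}]$ terms and yields
\[
F(u)=b_j(-u+\hbar/2)h_i(u)-b_j(u+\hbar/2)h_i(u).
\]
So the shifted arguments come from these evaluation points of \eqref{eq:hbcomm(z)}, not from decomposing $\frac{4u}{4u^2-\hbar^2}$.

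Combine this identity with $[u_2^{-1}]\frac{F(u_2)+F(u)}{u_2+u}=F(u)^\circ$ for odd $F$ with finitely many nonnegative powers; that is what your remark about the $h_i(u_2)$-terms amounts to. Together they give the forward direction. They also show that the coefficient of $u^{-k-1}$ in \eqref{eq:Serre-special-consequence} is $\hbar[h_{i,k+1},b_{j,-1}]$ in the paper's convention, which is exactly what your Step~2 needs to start. As a minor point, the left-hand prefactor is $\hbar^2$, not $\hbar^3$, if you keep $b_i(u)$ as a series.
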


\begin{proof}
We first show that \eqref{eq:usualSerre(z)} implies \eqref{eq:Serre-special-consequence}. 
For a Laurent series
\(
A(t)=\sum_{m\in\mathbb{Z}}A_m t^m,
\)
we write
\[
[t^m]A(t):=A_m.
\]

Set \(u_1=u\) and compare the
coefficient of \(u_2^{-1}v^{-1}\). Since
\[
b_i(u_2)=\hbar b_{i,0}u_2^{-1}+O(u_2^{-2}),
\qquad
b_j(v)=\hbar b_{j,0}v^{-1}+\hbar b_{j,1}v^{-2}+O(v^{-3}),
\]
the left-hand side contributes
\[
\hbar^2
\left(
[b_i(u),[b_{i,0},b_{j,0}]]
+
[b_{i,0},[b_i(u),b_{j,0}]]
\right).
\]
Moreover, we have
\[
[v^{-1}]\bigl((v\pm\hbar)b_j(v)\bigr)
=
\hbar b_{j,1}\pm\hbar^2b_{j,0},
\]

Now we set
\[
F(u):=
\frac{4u}{4u^2-\hbar^2}
\left(
[h_i(u),b_{j,1}]
-
\hbar\{h_i(u),b_{j,0}\}
\right).
\]
taking the
coefficient of \(v^{-1}\) on the right-hand side of
\eqref{eq:usualSerre(z)} gives
\[
\hbar^2\frac{F(u_2)+F(u)}{u_2+u}.
\]
Comparing with the
left-hand side, we obtain
\begin{equation*}
[b_i(u),[b_{i,0},b_{j,0}]]
+
[b_{i,0},[b_i(u),b_{j,0}]]
 =
[u_2^{-1}]
\frac{F(u_2)+F(u)}{u_2+u}.
\end{equation*}

Notice that \(F(-u)=-F(u)\), since \(h_i(-u)=h_i(u)\).
Furthermore, \(F(u)\) has only finitely many nonnegative powers.
For any such odd Laurent series, one shows that
\begin{equation*}
[u_2^{-1}]
\frac{F(u_2)+F(u)}{u_2+u}
=
F(u)^\circ.
\end{equation*}

Therefore, we conclude that
\begin{equation}\label{eq:coefficient-serre-reduction-truncated}
\begin{aligned}
&[b_i(u),[b_{i,0},b_{j,0}]]
+
[b_{i,0},[b_i(u),b_{j,0}]]
\\
&\qquad =
\left(
\frac{4u}{4u^2-\hbar^2}
\left(
[h_i(u),b_{j,1}]
-
\hbar\{h_i(u),b_{j,0}\}
\right)
\right)^\circ.
\end{aligned}
\end{equation}

We now use \eqref{eq:hbcomm(z)}. Since \(c_{ij}=-1\), it gives
\[
\begin{aligned}
(u^2-v^2)[h_i(u),b_j(v)]
&=
-\hbar v\{h_i(u),b_j(v)\}
+\frac{\hbar^2}{4}[h_i(u),b_j(v)]
\\
&\qquad
-\hbar[h_i(u),b_{j,1}]
-\hbar v[h_i(u),b_{j,0}]
+\hbar^2\{h_i(u),b_{j,0}\}.
\end{aligned}
\]
Rearranging, we obtain
\begin{equation}\label{eq:hb-rearranged-for-bj1}
\begin{aligned}
&\hbar
\left(
[h_i(u),b_{j,1}]
-\hbar\{h_i(u),b_{j,0}\}
\right)
\\
&\quad =
-\left(u^2-v^2-\frac{\hbar^2}{4}\right)[h_i(u),b_j(v)]
-\hbar v\{h_i(u),b_j(v)\}
-\hbar v[h_i(u),b_{j,0}]
\\
&\quad =
-\left(u^2-(v-\hbar/2)^2\right)h_i(u)b_j(v)
+\left(u^2-(v+\hbar/2)^2\right)b_j(v)h_i(u)
-\hbar v[h_i(u),b_{j,0}].
\end{aligned}
\end{equation}

Putting \(v=-u+\hbar/2\) in \eqref{eq:hb-rearranged-for-bj1}, we have
\begin{equation}\label{eq:negative-shift-reduction}
\begin{aligned}
[h_i(u),b_{j,1}]
-\hbar\{h_i(u),b_{j,0}\}
=
(2u-\hbar)b_j(-u+\hbar/2)h_i(u)
+
(u-\hbar/2)[h_i(u),b_{j,0}].
\end{aligned}
\end{equation}
Putting \(v=u+\hbar/2\) in \eqref{eq:hb-rearranged-for-bj1}, we have
\begin{equation}\label{eq:positive-shift-reduction}
\begin{aligned}
[h_i(u),b_{j,1}]
-\hbar\{h_i(u),b_{j,0}\}
=
-(2u+\hbar)b_j(u+\hbar/2)h_i(u)
-
(u+\hbar/2)[h_i(u),b_{j,0}].
\end{aligned}
\end{equation}

Dividing \eqref{eq:negative-shift-reduction} by \(2u-\hbar\)
and \eqref{eq:positive-shift-reduction} by \(2u+\hbar\), and then
adding the resulting identities, we obtain
\[
\begin{aligned}
&\frac{4u}{4u^2-\hbar^2}
\left(
[h_i(u),b_{j,1}]
-\hbar\{h_i(u),b_{j,0}\}
\right)
\\
&\qquad =
b_j(-u+\hbar/2)h_i(u)
-
b_j(u+\hbar/2)h_i(u).
\end{aligned}
\]
Combining this with
\eqref{eq:coefficient-serre-reduction-truncated} gives
\eqref{eq:Serre-special-consequence}.

Conversely, assume \eqref{eq:Serre-special-consequence}.
By the computation above, using only \eqref{eq:hbcomm(z)}, its
right-hand side can be rewritten as
\[
\left(
\frac{4u}{4u^2-\hbar^2}
\left(
[h_i(u),b_{j,1}]
-\hbar\{h_i(u),b_{j,0}\}
\right)
\right)^\circ.
\]
We first recover the \(k_1=k_2=r=0\) case of
\eqref{eq:iSerre}. Taking the coefficient of \(u^{-1}\) on the
left-hand side of \eqref{eq:Serre-special-consequence} gives
\[
2\hbar[b_{i,0},[b_{i,0},b_{j,0}]].
\]
On the other hand,
\[
\frac{4u}{4u^2-\hbar^2}
=
\sum_{p\geq0}2^{-2p}\hbar^{2p}u^{-2p-1},
\]
so the coefficient of \(u^{-1}\) on the right-hand side is
\[
\hbar
\sum_{p\geq0}2^{-2p}\hbar^{2p}
\left(
[h_{i,-2p-1},b_{j,1}]
-\hbar\{h_{i,-2p-1},b_{j,0}\}
\right).
\]
By our convention for \([h_{i,r},b_{j,-1}]\), the sum above is
precisely \([h_{i,1},b_{j,-1}]\). Therefore,
\[
2[b_{i,0},[b_{i,0},b_{j,0}]]
=
[h_{i,1},b_{j,-1}],
\]
which is exactly the \(k_1=k_2=r=0\) case of
\eqref{eq:iSerre}.

Starting from this relation, the recursive argument of
\cite{LZ24}, using \eqref{eq:hbcomm} and \eqref{eq:bcomm},
recovers \eqref{eq:iSerre} for arbitrary \(k_1,k_2,r\).
Consequently, \eqref{eq:usualSerre(z)} follows. A similar
reduction from a distinguished finite Serre relation to the full
family of Serre relations appears in
\cite[Proposition~6.3]{SSX26}.
\end{proof}

Now take 
\[\mu:=\sum_{i}(\ww_i\Lambda_i-\vv_i\alpha_i^\vee+2\epsilon_i\Lambda_i),\]
where $\Lambda_i$ is the fundamental coweight, and $\alpha_i^\vee$ is the simple coroot. Here we identify the values $\{\pm\}$ of $\epsilon_i$ with $\{\pm 1\}$. By the anomaly cancellation condition in Proposition \ref{prop:ano}, $\mu$  is even.
\begin{theorem} \label{thm:gklo}
    The assignment 
        \[b_i(u)\mapsto B_i(u), \quad h_i(u)\mapsto H_i(u)\]
        defines a $H_F^*(\pt)(\hbar)$-algebra homomorphism 
        \[\Phi:{}^{\iota}\mathbf{Y}_{\mu}(\mathfrak{g})\otimes H_F^*(\pt)\rightarrow \Diff_\hbar(T)\otimes_{\mathbb{C}[\hbar]} H_F^*(\pt)(\hbar).\]
\end{theorem}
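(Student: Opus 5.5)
We plan to verify directly that the series $B_i(u)$ of Proposition~\ref{prop:Bi} and $H_i(u)$ satisfy the generating-function relations \eqref{eq:hcomm(z)}--\eqref{eq:commSerre(z)} and the reduced Serre relation of Lemma~\ref{lem:serre} inside $\Diff_\hbar(T)$. This suffices because the generating-function relations are equivalent to the defining relations of Definition~\ref{shiftedTY}, as in \cite[\S3.3]{LZ24}. The truncation conventions also have to hold on the image. The vanishing convention \eqref{eq:vanishing} corresponds exactly to the vanishing $H_{i,r}=0$ in Lemma~\ref{lem:Hicond}, once one checks that $-\langle\mu,\alpha_i\rangle-1$ matches the bounds there: for instance $\langle\mu,\alpha_i\rangle=\ww_i-2\vv_i+\sum_{k-i}\vv_k+2\epsilon_i$. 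Similarly, $h_{i,2s}=0$ follows from the evenness $H_i(u)=H_i(-u)$.

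First, \eqref{eq:hcomm(z)} is immediate, since the $H_i(u)$ are multiplication operators by rational functions. For \eqref{eq:hbcomm(z)}, we write $B_j(v)=\sum_{\pm,r} \frac{1}{v+\frac{\hbar}{2}\pm x_{j,r}}\,g^{\pm}_{j,r}\,d_{j,r}^{\pm1}$, together with a constant term at symplectic vertices. Conjugating $H_i(u)$ past $d_{j,r}^{\pm1}$ shifts $x_{j,r}\mapsto x_{j,r}\pm\hbar$. So the relation reduces to an identity of rational functions: the ratio $H_i(u)/d_{j,r}(H_i(u))$ is an explicit product of linear factors in $u\pm x_{j,r}$, depending on $c_{ij}\in\{2,-1,0\}$. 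Evaluating at the poles $v=\mp x_{j,r}-\frac{\hbar}{2}$ then checks the identity termwise. The correction terms involving $b_{j,0},b_{j,1}$ absorb the polynomial part in $v$, and the constant term of $B_j$ at a symplectic vertex must be checked separately, using $\bbV_j(\pm\frac{\hbar}{2})$.

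Next, \eqref{eq:bcomm(z)} and \eqref{eq:commSerre(z)}. For $c_{ij}=0$, the operators involve disjoint variables and commuting shifts, so they commute. For $i\ne j$ adjacent, we expand $(u-v)[B_i(u),B_j(v)]$ as a sum over pairs $(r,s)$ of monomials $d_{i,r}^{\pm1}d_{j,s}^{\pm1}$. Each coefficient is a difference of products of rational functions, which we match termwise after partial fractions in $u,v$, using the factors $V_j(x_{i,r}+\frac{\hbar}{2})$ and $\bbV_k(x_{i,r}-\frac{\hbar}{2})$ in the numerators. The case $i=j$ is the main computation. Here the terms $d_{i,r}d_{i,r}^{-1}=1$ produce diagonal contributions. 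We will show that these sum, via a residue (Lagrange interpolation) argument on the rational function $H_i$, to $-\hbar\bigl(\frac{2u}{u+v}H_i^\circ(u)+\frac{2v}{u+v}H_i^\circ(v)\bigr)$. For this we write $H_i(u)$ as a sum over its simple poles at $u=\pm x_{i,r}\pm\frac{\hbar}{2}$ plus its polynomial part, and identify the residues with the products $g^+_{i,r}\cdot d_{i,r}(g^-_{i,r})$. At symplectic vertices, the extra constant term and the factor $\frac{1}{x_{i,r}+\hbar/2}$ produce the pole at $u=0$; this is reflected in the $u^2$ in the denominator of $H_i$, and Remark~\ref{rem:pole} handles the odd-dimensional orthogonal case.

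Finally, for the Serre relation with $c_{ij}=-1$, we use Lemma~\ref{lem:serre}. It suffices to check \eqref{eq:Serre-special-consequence}, which involves only $B_i(u)$, $B_{i,0}$ and $B_{j,0}$. We will compute both sides as sums over shifts $d_{i,r}^{\pm1}d_{i,r'}^{\pm1}d_{j,s}^{\pm1}$ and compare coefficients. The right side is computable because $B_j(\pm u+\frac{\hbar}{2})H_i(u)$ is explicit. The obstacle we expect is the bookkeeping: the diagonal terms (when $r=r'$ with opposite signs) and the constant terms at symplectic vertices make the truncation $(\cdot)^\circ$ nontrivial. We would first verify the identity in the smallest ranks, such as $\vv_i\le 2$, to pin down signs, especially the $(-1)^{\vv_i+\sum v_k}$ factors coming from $\sigma$, and then argue in general by residues in $u$.
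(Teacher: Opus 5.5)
Your plan is essentially the paper's own verification in Section~\ref{sec:veri}. There, every relation is checked coefficientwise in $\Diff_\hbar(T)$ using the commutation rules of the $\xi^\pm_{i,r}$ with each other and with $H_i(u)$ (Lemmas~\ref{lem:xii}--\ref{lem:xih}); the diagonal $\xi^+_{i,r}\xi^-_{i,r}$ terms are matched with the truncated Cartan series through a partial-fraction expansion of $uH_i(u)$ (Lemma~\ref{lem:H_i}); and the Serre relation is reduced via Lemma~\ref{lem:serre} and checked against the partial fractions of $\frac{u}{u^2-x_{j,t}^2}H_i(u)$. The paper also uses a bookkeeping convention you did not anticipate, $x_{i,0}=\frac{\hbar}{2}$ and $\xi^-_{i,0}=\xi_i$, which absorbs the constant term at a symplectic vertex into the same uniform computation.
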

To prove this theorem, we need to check that $B_i(u)$ and $H_i(u)$ satisfy the defining relations of ${}^{\iota}\mathbf{Y}_{\mu}(\mathfrak{g})$. We will check this in Section \ref{sec:veri} below. 
\begin{remark}\label{rem:gklo}
    The homomorphism of the above type was first studied by Gerasimov--Kharchev--Lebedev--Oblezin (GKLO) \cite{GKLO05} for the shifted Yangian. Later it was generalized in different settings, see \cite{KWWY14,BFN19,FT19,lu2025shifted,bartlett2025gklo,LWW26,li2026gklorepresentationsshiftedquantum,SSX26,wang2025quivers}. Our formulae for $B_i(u)$ and $H_i(u)$ come from geometry, and they are different from the ones in \cite{lu2025shifted,bartlett2025gklo} in the following ways: 
    \begin{itemize}
        \item The formulas in \cite[Theorem 7.2]{bartlett2025gklo} do not incorporate the equivariant variables from the flavor symmetry group.
        \item The denominators in our operators $B_i(u)$ in Proposition \ref{prop:Bi} are given by the Euler class of the tangent space at torus fixed points of the Schubert cell, while the ones in \cite[Theorem 3.6]{lu2025shifted} have an extra factor. The numerators and the constant term for the symplectic vertex are genuinely different.
    \end{itemize}
\end{remark}

\begin{theorem}\label{thm:main}
   The homomorphism $\Phi$ in Theorem \ref{thm:gklo} factors through a unique $H_{F}^*(\pt)(\hbar)$-algebra homomorphism $\Psi$:
   \[\xymatrix{{}^{\iota}\mathbf{Y}_{\mu}(\mathfrak{g})\otimes H_{F}^*(\pt) \ar[rr]^-\Psi \ar[rd]^-\Phi & &\mathscr{A}^F_\hbar(G,\bfM)\otimes_{\mathbb{C}[\hbar]}\mathbb{C}(\hbar) \ar@{^{(}->}[ld]
   \\
   &\Diff_\hbar(T)\otimes_{\mathbb{C}[\hbar]} H_{F}^*(\pt)(\hbar),}\] 
   such that for any $i\in Q_0$, $\Psi(h_{i,r})=\hbar^{-1}H_{i,r}$, and
   \[\Psi(b_{i,r})=\begin{cases} \hbar^{-1}(-1)^{1+w_i} (-c_1(\mathcal{L}_i)-\tfrac{\hbar}{2})^r\cap [\calR_{G_+,\bfN_+,\epsilon_{i,1}}] & \textit{ if }\epsilon(i)=+;\\
       (-1)^{1+\frac{\ww_i+\sum_{j-i}\vv_j}{2}}2\hbar^{-1}\tilde{p}_*\big((-c_1(\mathcal{L}_i)-\tfrac{\hbar}{2})^r\cap [\widetilde{\calR}_{G_-,\bfN_-,\leq \epsilon_{i,1}}]\big)+\cdots & \textit{ if } \epsilon(i)=-,
   \end{cases}\]
   where $\cdots$ is some element in $H_{G_-\times F\times \mathbb{C}^\times_\hbar}^*(\pt)[\hbar^{-1}]$.
\end{theorem}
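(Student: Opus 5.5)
Taking Theorem~\ref{thm:gklo} as given, the plan is to show that the image of $\Phi$ lands in the subalgebra $\mathscr{A}^F_\hbar(G,\bfM)\otimes_{\mathbb{C}[\hbar]}\mathbb{C}(\hbar)$. This subalgebra embeds into $\Diff_\hbar(T)\otimes_{\mathbb{C}[\hbar]}H_F^*(\pt)(\hbar)$ by Definition~\ref{def:noncot}, since it is an intersection of subalgebras of $\Diff_\hbar(T)$ and localizing in $\hbar$ preserves injectivity. Uniqueness of $\Psi$ then follows from this injectivity. For existence it suffices to check that the images of the generators $h_{i,r}$ and $b_{i,r}$ lie in $\mathscr{A}^F_\hbar(G,\bfM)\otimes\mathbb{C}(\hbar)$, because the latter is a subalgebra. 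The generators of $H_F^*(\pt)$ need no separate treatment.

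\textbf{Cartan generators.} By the definition of $H_i(u)$ and the remark following it, each coefficient $H_{i,r}$ is a Weyl-invariant element of $H^*_{T\times F\times\mathbb{C}^\times_\hbar}(\pt)$. By Remark~\ref{rem:pole}, the apparent poles at $u=\pm\hbar/2$ cancel, so after expansion in $u^{-1}$ the coefficients are genuine polynomials. Such a polynomial lies in the Gelfand--Tsetlin part of $\mathcal{A}^F_\hbar(TG_1,\bfN_1)$, and it is $W$-invariant for the twisted action, since $\sigma$ is the identity on functions. Theorem~\ref{thm:noncotsub} then gives $\Psi(h_{i,r})=\hbar^{-1}H_{i,r}\in\mathscr{A}^F_\hbar\otimes\mathbb{C}(\hbar)$.

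\textbf{Orthogonal vertices.} I would expand $B_i(u)$ in $u^{-1}$, using
\[
\frac{1}{u+\frac{\hbar}{2}\pm x}=\sum_{s\ge0}\Bigl(\mp x-\tfrac{\hbar}{2}\Bigr)^s u^{-s-1}.
\]
Comparing with Proposition~\ref{prop:orth} for $f(x)=(-x-\hbar/2)^r$ then identifies $\hbar B_{i,r}$, with the normalization $b_i(u)=\hbar\sum_r b_{i,r}u^{-r-1}$, as $(-1)^{1+w_i}$ times the image of $(-c_1(\mathcal{L}_i)-\hbar/2)^r\cap[\calR_{G_+,\bfN_+,\epsilon_{i,1}}]$. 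Proposition~\ref{prop:orth} already places this class in $\mathscr{A}^F_\hbar(G,\bfM)$. The main bookkeeping here is the sign: I have to match the sign $(-1)^{w_i+\sum_{k-i}v_k}$ in the $d^{-1}$ term of Proposition~\ref{prop:orth}, together with the extra $-$ in the odd case, against the sign $(-1)^{\vv_i+\sum_{k-i}v_k}$ in $B_i(u)$.

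\textbf{Symplectic vertices.} These are the main obstacle. Proposition~\ref{prop:symp} with $f(x)=(-x)^r$ shows that $\tilde{p}_*\bigl((-c_1(\mathcal{L}_i)-\hbar/2)^r\cap[\widetilde{\calR}]\bigr)$ equals
\[
(-1)^{(\ww_i+\sum_{j-i}\vv_j)/2}\sum_{r'}(\cdots)(d_{i,r'}-1)+\text{(GT polynomial)}.
\]
The key check is the factor of $2$: the denominator $(2x+\hbar)\bbV_{i,r}$ becomes $2(x+\hbar/2)\bbV_{i,r}$. With this, $\hbar B_{i,r}$ differs from $(-1)^{1+\cdots}\,2\,\tilde{p}_*(\cdots)$ only by the following terms:
\begin{itemize}
\item the ``$-1$'' parts, namely the sum of the $d$-free coefficients; as in the proof of Proposition~\ref{prop:Bi} and at the end of Theorem~\ref{thm:quasi}, these are polynomials, except for the residue piece $\delta_{\dim\Upsilon_i,0}W_i(0)\prod_{j-i}V_j(0)/\bbV_i(\hbar/2)$;
\item that residue piece, which is exactly cancelled by the constant term included in $B_i(u)$, up to a polynomial.
\end{itemize}
Hence $B_{i,r}$ differs from the geometric class by an element of $H^*_{G_-\times F\times\mathbb{C}^\times_\hbar}(\pt)[\hbar^{-1}]$, which is what the $\cdots$ in the statement records.

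Membership of $B_{i,r}$ itself in $\mathscr{A}^F_\hbar\otimes\mathbb{C}(\hbar)$ is then Proposition~\ref{prop:Bi}. That proposition used Theorem~\ref{thm:noncotsub}, combining the Weyl invariance under the $W_1$ action and the twisted $W_2$ action with membership in $\mathcal{A}^F_\hbar(G_-,\bfN_-)$. I expect the careful matching of normalizing signs and the factor $2$ at symplectic vertices to be the only delicate step.
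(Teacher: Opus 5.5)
Your proposal is correct and takes the same approach as the paper, whose proof just combines Theorem~\ref{thm:gklo} with Propositions~\ref{prop:orth}, \ref{prop:symp} and \ref{prop:Bi} (plus the Weyl-invariance remark after the definition of $H_i(u)$); your argument unpacks exactly this, including the sign matching, the factor $2$, and the cancellation of the $\delta_{\dim\Upsilon_i,0}$ residue piece by the constant term of $B_i(u)$. One small normalization slip: since $b_i(u)=\hbar\sum_r b_{i,r}u^{-r-1}\mapsto B_i(u)=\sum_r B_{i,r}u^{-r-1}$, it is $B_{i,r}$ itself, not $\hbar B_{i,r}$, that equals the geometric class up to the stated sign and factor, and this is what makes $\Psi(b_{i,r})=\hbar^{-1}B_{i,r}$ agree with the formula in the statement.
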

\begin{proof}
    This follows from Theorem \ref{thm:gklo}, Proposition \ref{prop:orth}, Proposition \ref{prop:symp}, and Proposition \ref{prop:Bi}.
\end{proof}

\section{Verification of relations}\label{sec:veri}
The goal of this section is to prove Theorem \ref{thm:gklo}. 
The method is not much different from that of \cite{LWW,bartlett2025gklo}. 
We include the verification here for completeness. 

Recall that 
\[H_i(u)=\begin{cases}
        (-1)^{\vv_i+\sum_{k-i}v_k}\frac{(4u^2-\hbar^2)W_i(u)W_i(-u)\prod_{k-i}\bbV_k(u)}{\bbV_i(u-\frac{\hbar}{2})\bbV_i(u+\frac{\hbar}{2})}, & \textit{ if } \epsilon(i)=+;\\
        \frac{u^{\dim \Upsilon_i}W_i(u)\prod_{j-i} V_j(u)(-u)^{\dim \Upsilon_i}W_i(-u)\prod_{j-i} V_j(-u)}{u^2\bbV_i(u-\frac{\hbar}{2})\bbV_i(u+\frac{\hbar}{2})}, & \textit{ if } \epsilon(i)=-.
    \end{cases}\]
Moreover, 
\[B_i(u)
        =\sum_{r=1}^{v_i}\frac{1}{u+\frac{\hbar}{2}+x_{i,r}}\xi_{i,r}^+
        +\sum_{r=1}^{v_i}\frac{1}{u+\frac{\hbar}{2}-x_{i,r}}\xi_{i,r}^-+
        \frac{\xi_i}{u},\]
where
\[\xi_{i,r}^+=\begin{cases}
            -\frac{W_i(x_{i,r}+\frac{\hbar}{2})}{\bbV_{i,r}(x_{i,r})/2x_{i,r}}d_{i,r}, & \textit{ if } \epsilon(i)=+ ;\\  
            -\frac{(x_{i,r}+\frac{\hbar}{2})^{\dim \Upsilon_i}W_i(x_{i,r}+\frac{\hbar}{2})\prod_{j-i}V_j(x_{i,r}+\frac{\hbar}{2})}{(x_{i,r}+\frac{\hbar}{2})\bbV_{i,r}(x_{i,r})}d_{i,r}, & \textit{ if } \epsilon(i)=-;
        \end{cases}\]        
\[\xi_{i,r}^-=\begin{cases}
            -(-1)^{\vv_i+\sum_{k-i}v_k}\frac{W_i(-x_{i,r}+\frac{\hbar}{2})\prod_{k-i}\bbV_k(x_{i,r}-\frac{\hbar}{2})}{\bbV_{i,r}(x_{i,r})/2x_{i,r}}d_{i,r}^{-1}, & \textit{ if } \epsilon(i)=+ ;\\   
            -\frac{(-x_{i,r}+\frac{\hbar}{2})^{\dim \Upsilon_i}W_i(-x_{i,r}+\frac{\hbar}{2})\prod_{j-i}V_j(-x_{i,r}+\frac{\hbar}{2})}{(x_{i,r}-\frac{\hbar}{2})\bbV_{i,r}(x_{i,r})}d_{i,r}^{-1}, & \textit{ if } \epsilon(i)=-,
        \end{cases},\]
and 
\[\xi_i=\begin{cases}
        0, &\textit{ if } \epsilon(i)=+;\\
        -\delta_{\dim \Upsilon_i,0}\frac{W_i(0)\prod_{j-i}V_j(0)}{\bbV_i(\frac{\hbar}{2})}, & \textit{ if }\epsilon(i)=-.
    \end{cases}\]

For convenience, we let $x_{i,0}:=\frac{\hbar}{2}$, $\xi_{i,0}^-:=\xi_i$ and $\xi_{i,0}^+=0$. Hence, 
\[B_i(u)
        =\sum_{r=0}^{v_i}\frac{1}{u+\frac{\hbar}{2}+x_{i,r}}\xi_{i,r}^+
        +\sum_{r=0}^{v_i}\frac{1}{u+\frac{\hbar}{2}-x_{i,r}}\xi_{i,r}^-.\]

First of all, by Lemma \ref{lem:Hicond} and the definition of $\mu$, we get the relation \eqref{eq:hcomm(z)}, and the compatible vanishing of $H_{i,r}$ and $h_{i,r}$. Since $B_i(u)$ only involves shift operators $d_{i,r}$, the relation \eqref{eq:commSerre(z)} is obvious.  Let us verify the remaining relations. Since $b_i(u)\mapsto B_i(u)$, $\hbar b_{i,r}$ is sent to $B_{i,r}$, the coefficient of $u^{-r-1}$ in $B_i(u)$.

\subsection{Some preparations}
We need some combinatorial identities.
For a Laurent series $f(z)=\sum_{i\in \mathbb{Z}} a_iz^{i}$ in $z^{-1}$ ,
we denote its truncation 
\[\big(f(z)\big)^{\circ}
:=\sum_{i< 0}a_iz^i.\] 
If $\tilde{f}(z):=f(z)/z$, then
\begin{equation}\label{equ:divz}
    u\big(\tilde{f}(u)\big)^\circ+v\big(\tilde{f}(-v)\big)^\circ
    =u\sum_{i< 1}a_iu^{i-1}-(-v)\sum_{i< 1}a_i(-v)^{i-1}=\big(f(u)\big)^{\circ}-\big(f(-v)\big)^{\circ}.
\end{equation}

\begin{lemma}[{\cite[Lemma 5.1]{SSX26}}]
\label{lem:partialfrac}
    Assume $f(z)=\dfrac{g(z)}{\prod_{i=1}^n (z-z_i)}$, where $z_1,\ldots,z_n$ are distinct and $g(z)\in \mathbb{C}[z]$. Then
    \[(f(z))^\circ=\sum_{i=1}^n\frac{1}{z-z_i}\Res(f(z),z_i).\]
\end{lemma}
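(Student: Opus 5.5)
We prove Lemma~\ref{lem:partialfrac} by partial fractions. The idea is to split $f$ into a polynomial part and a strictly proper part; truncation then kills the polynomial part and leaves the strictly proper part unchanged.

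\emph{Step 1 (decomposition).} Divide $g$ by $P(z):=\prod_{i=1}^n(z-z_i)$ with remainder:
\[
g(z)=q(z)P(z)+\rho(z),\qquad \deg\rho<n .
\]
Then $f(z)=q(z)+\rho(z)/P(z)$ with $q\in\mathbb{C}[z]$. Since the $z_i$ are distinct, $P$ has simple roots, and the classical partial fraction decomposition of the strictly proper fraction reads
\[
\frac{\rho(z)}{P(z)}=\sum_{i=1}^n\frac{c_i}{z-z_i},\qquad c_i=\frac{\rho(z_i)}{P'(z_i)}.
\]
Because $q(z_i)P(z_i)=0$, we have $\rho(z_i)=g(z_i)$. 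Hence
\[
c_i=\frac{g(z_i)}{\prod_{j\neq i}(z_i-z_j)}=\Res(f(z),z_i),
\]
using that $f$ has at most a simple pole at $z_i$.

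\emph{Step 2 (truncation).} Expand everything as Laurent series in $z^{-1}$. Each term
\[
\frac{1}{z-z_i}=\sum_{k\ge 0}z_i^kz^{-k-1}
\]
involves only strictly negative powers of $z$. So $\rho/P$ equals its own truncation, $(\rho/P)^\circ=\rho/P$. On the other hand, $q$ is a polynomial and contains only nonnegative powers, so $(q)^\circ=0$. Since $(\cdot)^\circ$ is linear,
\[
(f)^\circ=\sum_{i=1}^n\frac{1}{z-z_i}\Res(f(z),z_i).
\]

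The only point needing care is that the expansion of $f$ in $z^{-1}$ is compatible with the decomposition above. This holds because both sides are rational functions and the expansion at $z=\infty$ is a ring homomorphism into $\mathbb{C}((z^{-1}))$. In the paper's use the $z_i$ lie in a field of rational functions (in the $x_{i,r}$ and $\hbar$) rather than in $\mathbb{C}$, but the argument is purely algebraic and carries over verbatim.
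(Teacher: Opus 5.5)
Your proposal is correct: dividing off the polynomial part, identifying the partial-fraction coefficients $\rho(z_i)/P'(z_i)=g(z_i)/\prod_{j\neq i}(z_i-z_j)$ with the residues, and observing that truncation kills polynomials while fixing each $1/(z-z_i)$ expanded in $z^{-1}$ is the standard argument. The paper gives no proof of its own and simply cites \cite[Lemma 5.1]{SSX26}; your remark that the argument carries over verbatim when the $z_i$ lie in the field of rational functions in the $x_{i,r}$ and $\hbar$ (as needed in Lemma~\ref{lem:H_i}) is also correct.
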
 

Recall $d_{i,r}x_{j,s}=(x_{j,s}+\delta_{i,j}\delta_{r,s}\hbar)d_{i,r}$ for $1\leq r\leq v_i, 1\leq s\leq v_j$.
By direct computations, we get the following commutation relations.
\begin{lemma}\label{lem:xii}
    For $0\leq r\neq s\leq v_i$,
    \[\xi_{i,s}^\pm\xi_{i,r}^\pm=\frac{\pm x_{i,r}\mp x_{i,s}+\hbar}{\pm x_{i,r}\mp x_{i,s}-\hbar}\xi_{i,r}^\pm\xi_{i,s}^\pm,\]
    and
    \[\xi_{i,s}^\mp\xi_{i,r}^\pm=\frac{\pm x_{i,r}\pm x_{i,s}+\hbar}{\pm x_{i,r}\pm x_{i,s}-\hbar}\xi_{i,r}^\pm\xi_{i,s}^\mp.\]
\end{lemma}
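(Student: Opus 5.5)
The plan is to verify both identities by writing each $\xi^\pm_{i,r}$ as $\phi^\pm_{i,r}\,d_{i,r}^{\pm1}$. Here $\phi^\pm_{i,r}$ is the rational coefficient function in $\mathbb{C}(\mathfrak{t}\times\mathbb{A}^1)\otimes H_F^*(\pt)$ displayed above. For $r=0$ we set $d_{i,0}:=1$, so $\xi^-_{i,0}=\xi_i$ is a pure function, which is Weyl invariant in the $x_{i,\bullet}$ variables. We then move all shift operators to the right using $d_{i,r}x_{i,s}=(x_{i,s}+\delta_{rs}\hbar)d_{i,r}$. Since the $d_{i,r}$ commute among themselves, we get
\[
\xi^{\epsilon'}_{i,s}\xi^{\epsilon}_{i,r}
=\phi^{\epsilon'}_{i,s}\,\bigl(d_{i,s}^{\epsilon'}\phi^{\epsilon}_{i,r}d_{i,s}^{-\epsilon'}\bigr)\,d_{i,s}^{\epsilon'}d_{i,r}^{\epsilon},
\qquad
\xi^{\epsilon}_{i,r}\xi^{\epsilon'}_{i,s}
=\phi^{\epsilon}_{i,r}\,\bigl(d_{i,r}^{\epsilon}\phi^{\epsilon'}_{i,s}d_{i,r}^{-\epsilon}\bigr)\,d_{i,r}^{\epsilon}d_{i,s}^{\epsilon'}.
\]
The claimed ratio is therefore
\[
\frac{\phi^{\epsilon'}_{i,s}(x)\,\phi^{\epsilon}_{i,r}(x+\epsilon'\hbar e_s)}{\phi^{\epsilon}_{i,r}(x)\,\phi^{\epsilon'}_{i,s}(x+\epsilon\hbar e_r)}.
\]

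The key observation is which factors of $\phi^\pm_{i,r}$ depend on $x_{i,s}$ for $s\neq r$. The numerator factors $W_i$, $V_j$, $\bbV_k$ and $(x_{i,r}\pm\frac{\hbar}{2})^{\dim\Upsilon_i}$ involve only $x_{i,r}$ and variables at other vertices. They are unchanged by a shift in $x_{i,s}$, so they cancel in the ratio. The same holds for the prefactors $2x_{i,r}$ and $x_{i,r}\pm\frac{\hbar}{2}$ in the denominators. The only relevant factor is $\bbV_{i,r}(x_{i,r})$, which up to $x_{i,r}$-only factors is $\prod_{t\neq r}(x_{i,r}^2-x_{i,t}^2)$. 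The ratio thus reduces to the factor $(x_{i,r}-x_{i,s})(x_{i,r}+x_{i,s})$ evaluated before and after the shifts.

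In the same-sign case $\xi^\pm\xi^\pm$, shifting $x_{i,s}\mapsto x_{i,s}\pm\hbar$ in $1/((x_{i,r}-x_{i,s})(x_{i,r}+x_{i,s}))$ and symmetrically for $r$ gives a ratio
\[
\frac{(x_r-x_s)(x_r+x_s)\,(x_r\pm\hbar-x_s)(x_r\pm\hbar+x_s)}{(x_r-x_s\mp\hbar)(x_r+x_s\pm\hbar)\,(x_r-x_s)(x_r+x_s)}.
\]
Here the $x_r+x_s$ type factors cancel pairwise, leaving $\frac{\pm x_r\mp x_s+\hbar}{\pm x_r\mp x_s-\hbar}$ after sign bookkeeping. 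In the opposite-sign case it is the $x_r-x_s$ factors that cancel, leaving $\frac{\pm x_r\pm x_s+\hbar}{\pm x_r\pm x_s-\hbar}$.

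The cases with $r=0$ or $s=0$ need separate handling. For those we use $x_{i,0}=\frac{\hbar}{2}$ and $\xi^+_{i,0}=0$, and check that the only nontrivial instance, commuting $\xi_i$ with $\xi^\pm_{i,r}$, is consistent with the formula. This requires using that $\bbV_i(\frac{\hbar}{2})$ contains the factor $(\frac{\hbar}{2})^2-x_{i,r}^2$, which transforms as $x_{i,r}\mapsto x_{i,r}\pm\hbar$ to give exactly the stated ratio at $x_{i,s}=\frac{\hbar}{2}$.

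The main obstacle is getting the signs and shifts right in these degenerate $r=0$ cases and in the odd-dimensional orthogonal case. We will verify them by direct substitution.
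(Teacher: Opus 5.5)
Your proposal is correct and is the same direct computation the paper invokes; the paper gives no detail beyond ``by direct computations''. After writing $\xi^\pm_{i,r}=\phi^\pm_{i,r}d_{i,r}^{\pm1}$, the only factor that sees the shifts is $x_{i,r}^2-x_{i,s}^2$ in $\bbV_{i,r}(x_{i,r})$, and your same-sign and opposite-sign cancellations give exactly the stated ratios. Your handling of the degenerate indices is also right: $\xi^+_{i,0}=0$ makes those instances trivial. The factor $\frac{\hbar^2}{4}-x_{i,r}^2$ of $\bbV_i(\frac{\hbar}{2})$ in $\xi_i$ produces the ratios $\frac{x_{i,r}+\frac{3\hbar}{2}}{x_{i,r}-\frac{\hbar}{2}}$ and $\frac{-x_{i,r}+\frac{3\hbar}{2}}{-x_{i,r}-\frac{\hbar}{2}}$, which match the formulas at $x_{i,0}=\frac{\hbar}{2}$.
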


\begin{lemma}\label{lem:xipm}
    Suppose $c_{ij}=-1$. For any $0\leq r\leq v_i$ and $0\leq s\leq v_j$, we have
    \[\xi_{j,s}^\pm\xi_{i,r}^\pm=\frac{\pm x_{i,r}-\frac{\hbar}{2}\mp x_{j,s}}{\pm x_{i,r}+\frac{\hbar}{2}\mp x_{j,s}}\xi_{i,r}^\pm\xi_{j,s}^\pm,\]
    and
    \[\xi_{j,s}^\pm\xi_{i,r}^\mp=\frac{\pm x_{i,r}+\frac{\hbar}{2}\pm x_{j,s}}{\pm x_{i,r}-\frac{\hbar}{2}\pm x_{j,s}}\xi_{i,r}^\mp\xi_{j,s}^\pm.\]
\end{lemma}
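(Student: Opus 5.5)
The plan is a direct computation in $\Diff_\hbar(T)$ using only $d_{i,r}x_{j,s}=(x_{j,s}+\delta_{i,j}\delta_{r,s}\hbar)d_{i,r}$. For $r\geq 1$ I write $\xi_{i,r}^+=a_{i,r}(x)\,d_{i,r}$ and $\xi_{i,r}^-=a^-_{i,r}(x)\,d_{i,r}^{-1}$, with $a_{i,r},a^-_{i,r}$ the rational coefficients listed just before Lemma~\ref{lem:xii}. The degenerate element $\xi_{i,0}^-=\xi_i$ I treat as $a^-_{i,0}(x)$ with no shift operator. Then
\[
\xi_{j,s}^{\pm}\xi_{i,r}^{\epsilon}
=\frac{(d_{j,s}^{\pm1}\,a^{\epsilon}_{i,r}\,d_{j,s}^{\mp1})\,a^{\pm}_{j,s}}{(d_{i,r}^{\epsilon1}\,a^{\pm}_{j,s}\,d_{i,r}^{-\epsilon1})\,a^{\epsilon}_{i,r}}\;\xi_{i,r}^{\epsilon}\xi_{j,s}^{\pm}.
\]
This holds because the shift operators $d_{i,r}$ and $d_{j,s}$ commute, and $\epsilon\in\{\pm\}$. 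So the claimed ratio is a product of two factors. The first is the ratio by which $a^\epsilon_{i,r}$ changes under $x_{j,s}\mapsto x_{j,s}\pm\hbar$. The second is the inverse of the ratio by which $a^{\pm}_{j,s}$ changes under $x_{i,r}\mapsto x_{i,r}+\epsilon\hbar$. Since $c_{ij}=-1$, one of $i,j$ is orthogonal and the other symplectic. By symmetry of the statement in the roles of $(i,r)$ and $(j,s)$, I may assume $i$ symplectic and $j$ orthogonal, and then treat the two sign patterns separately.

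Next I isolate which factors of each coefficient actually depend on the neighbouring variables.

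For the symplectic $i$, the only dependence on $x_{j,s}$ is through $V_j(\pm x_{i,r}+\tfrac{\hbar}{2})$, which contains the single factor $(\pm x_{i,r}+\tfrac{\hbar}{2}-x_{j,s})$. The denominators $\bbV_{i,r}$, $(x_{i,r}\pm\tfrac{\hbar}{2})$ and the $W_i$, $\Upsilon_i$ factors are inert.

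For the orthogonal $j$, the coefficient $a^+_{j,s}$ does not involve $x_{i,r}$ at all. The coefficient $a^-_{j,s}$ involves $x_{i,r}$ only through $\bbV_i(x_{j,s}-\tfrac{\hbar}{2})$, i.e. the factor $\bigl((x_{j,s}-\tfrac{\hbar}{2})^2-x_{i,r}^2\bigr)$.

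Each of the four cases therefore reduces to a one-line ratio of linear factors. For example, for $\xi^+_{j,s}\xi^+_{i,r}$ the first factor is
\[
\frac{x_{i,r}+\tfrac{\hbar}{2}-x_{j,s}-\hbar}{x_{i,r}+\tfrac{\hbar}{2}-x_{j,s}}
=\frac{x_{i,r}-\tfrac{\hbar}{2}-x_{j,s}}{x_{i,r}+\tfrac{\hbar}{2}-x_{j,s}},
\]
and the second factor is trivial, which gives the claim. The mixed cases use the quadratic factor of $\bbV_i$. There, one of the two linear factors $(x_{j,s}-\tfrac{\hbar}{2}\mp x_{i,r})$ shifts and the other is unchanged. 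Combined with the shift of $V_j(-x_{i,r}+\tfrac{\hbar}{2})$, this should telescope to the stated quotient. Any sign discrepancy from writing $(x_{j,s}-x_{i,r}-\tfrac{\hbar}{2})$ versus $(x_{i,r}+\tfrac{\hbar}{2}-x_{j,s})$ cancels between numerator and denominator.

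The step I expect to need most care is the boundary index $r=0$ (or $s=0$), where $\xi_{i,0}^-=\xi_i$ and $x_{i,0}=\tfrac{\hbar}{2}$. Here there is no shift operator on the $i$ side, so only the first factor survives. It comes from $V_j(0)=\prod_s(-x_{j,s})$, which under $d_{j,s}^{\pm1}$ changes by $(x_{j,s}\pm\hbar)/x_{j,s}$. I will check that substituting $x_{i,0}=\tfrac{\hbar}{2}$ into the stated formula yields exactly this ratio. For instance, the first relation with lower signs gives
\[
\frac{-\tfrac{\hbar}{2}-\tfrac{\hbar}{2}+x_{j,s}}{-\tfrac{\hbar}{2}+\tfrac{\hbar}{2}+x_{j,s}}=\frac{x_{j,s}-\hbar}{x_{j,s}},
\]
which is consistent. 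The same check applies for the other sign, and the convention $\xi_{i,0}^+=0$ makes the remaining $r=0$ instances trivially true. Finally, I need $\delta_{\dim\Upsilon_i,0}=1$ in this case, so that $\xi_i\neq 0$. Then each neighbouring $\vv_j$ is even, so the orthogonal $\bbV_j$ has no extra $x=0$ coordinate to worry about.
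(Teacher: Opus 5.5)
Your proposal is correct and is exactly the direct computation the paper intends (the paper itself only says ``by direct computations''). The ratio formula, the reduction to $i$ symplectic and $j$ orthogonal, the list of cross-dependencies, and the boundary check at $x_{i,0}=\frac{\hbar}{2}$ through $V_j(0)$ are all right, with one correction to your sketch: the quadratic factor $(x_{j,s}-\frac{\hbar}{2})^2-x_{i,r}^2$ enters exactly in $\xi^-_{j,s}\xi^{+}_{i,r}$ and $\xi^-_{j,s}\xi^{-}_{i,r}$ (not in both mixed products), where both of its linear factors move under $x_{i,r}\mapsto x_{i,r}\pm\hbar$, one cancelling against the shifted factor $(\pm x_{i,r}+\frac{\hbar}{2}-x_{j,s})$ of $V_j(\pm x_{i,r}+\frac{\hbar}{2})$ and the other giving the stated quotient, while $\xi^{+}_{j,s}\xi^-_{i,r}$ is a one-factor computation just like your $++$ example.
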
   
\begin{lemma}\label{lem:xih}
    \begin{enumerate}
        \item For each vertex $i$ and $0\leq r\leq v_i$,
        \[\xi_{i,r}^\pm H_i(u)=\frac{(u-\frac{\hbar}{2}\pm x_{i,r})(u+\frac{\hbar}{2}\mp x_{i,r})}{(u-\frac{3\hbar}{2}\mp x_{i,r})(u+\frac{3\hbar}{2}\pm x_{i,r})}H_i(u)\xi_{i,r}^\pm .\]
        \item For two vertices $i,j$ satisfying $c_{ij}=-1$, and $1\leq r\leq v_j$,
        \[\xi_{j,r}^\pm H_i(u)=\frac{(u\mp x_{j,r}-\hbar)(u\pm x_{j,r}+\hbar)}{(u\mp x_{j,r})(u\pm x_{j,r})}H_i(u)\xi_{j,r}^\pm.\]
    \end{enumerate}   
\end{lemma}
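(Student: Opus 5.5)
The plan is a direct computation in $\Diff_\hbar(T)$. The key point is that $H_i(u)$ is a rational function of the $x$'s and $u$ alone, with no shift operators. Each $\xi^\pm_{j,r}$ with $r\ge1$ has the form $g\,d_{j,r}^{\pm1}$ for a rational function $g$. The multiplication rule $(f d_\lambda)(g d_\mu)=f\,g(t+\lambda\hbar)\,d_{\lambda+\mu}$ then gives
\[
\xi^\pm_{j,r}H_i(u)=\bigl(H_i(u)\big|_{x_{j,r}\mapsto x_{j,r}\pm\hbar}\bigr)\,\xi^\pm_{j,r}.
\]
So in both parts it suffices to compute the ratio
\[
H_i(u)\big|_{x_{j,r}\mapsto x_{j,r}\pm\hbar}\Big/H_i(u),
\]
which only involves the factors of $H_i(u)$ depending on $x_{j,r}$. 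Signs and all factors built from $W_i$ or $\Upsilon_i$ are inert and cancel.

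For part (1), take $j=i$ and $1\le r\le v_i$. In both the orthogonal and the symplectic case, the only dependence of $H_i(u)$ on $x_{i,r}$ is through the denominator $\bbV_i(u-\frac{\hbar}{2})\bbV_i(u+\frac{\hbar}{2})$. Its $x_{i,r}$-part is
\[
(u-\tfrac{\hbar}{2}-x)(u-\tfrac{\hbar}{2}+x)(u+\tfrac{\hbar}{2}-x)(u+\tfrac{\hbar}{2}+x),\qquad x=x_{i,r}.
\]
Replacing $x$ by $x\pm\hbar$ and taking the ratio of the old denominator to the new one, two of the four linear factors cancel. The surviving quotient is
\[
\frac{(u-\frac{\hbar}{2}\pm x)(u+\frac{\hbar}{2}\mp x)}{(u-\frac{3\hbar}{2}\mp x)(u+\frac{3\hbar}{2}\pm x)},
\]
which is the claimed factor. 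I will check the two sign cases separately, since this bookkeeping is the only place an error is likely.

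The degenerate index $r=0$ needs its own check. There $\xi^+_{i,0}=0$, so the $+$ case is trivial. The operator $\xi^-_{i,0}=\xi_i$ contains no shift and therefore commutes with $H_i(u)$. Substituting $x_{i,0}=\frac{\hbar}{2}$ into the stated factor (with lower signs) gives
\[
\frac{(u-\hbar)(u+\hbar)}{(u-\hbar)(u+\hbar)}=1,
\]
so the formula is consistent.

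For part (2), take $c_{ij}=-1$ and $1\le r\le v_j$. If $i$ is orthogonal, then $j$ is symplectic, and the $x_{j,r}$-dependence of $H_i(u)$ sits in the factor $u^2-x_{j,r}^2$ of $\bbV_j(u)$ in the numerator. If $i$ is symplectic, then $j$ is orthogonal, and the dependence sits in
\[
V_j(u)V_j(-u)\ni (u-x_{j,r})(-u-x_{j,r})=-(u^2-x_{j,r}^2).
\]
In either case the ratio is
\[
\frac{u^2-(x\pm\hbar)^2}{u^2-x^2}=\frac{(u\mp x-\hbar)(u\pm x+\hbar)}{(u-x)(u+x)},\qquad x=x_{j,r},
\]
as stated. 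The only point needing care, though it is not a real obstacle, is confirming that in the symplectic case the $\Upsilon_i$ factor and the odd-dimensional $u^{\overline{\vv_j}}$ factors carry no $x_{j,r}$-dependence, so nothing else contributes.
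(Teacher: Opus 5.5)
Your proposal is correct and is the same direct computation the paper relies on: moving the shift-free $H_i(u)$ past $d_{j,r}^{\pm1}$ substitutes $x_{j,r}\mapsto x_{j,r}\pm\hbar$. The resulting ratio only involves the $x_{j,r}$-dependent linear factors of $\bbV_i(u-\tfrac{\hbar}{2})\bbV_i(u+\tfrac{\hbar}{2})$ in part (1), and of $\bbV_j(u)$, respectively $V_j(u)V_j(-u)$, in part (2). Your separate check of the degenerate index $r=0$ is something the paper leaves implicit: there $\xi_i$ is shift-free, and the stated factor equals $1$ at $x_{i,0}=\tfrac{\hbar}{2}$.
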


Applying Lemma \ref{lem:partialfrac} to $H_i(u)$, we get the following formula.
\begin{lemma}\label{lem:H_i}
    For any vertex $i$,
    \begin{align*}
        -2\hbar(uH_i(u))^\circ=&\sum_{r=1}^{v_i}\bigg(\big(\frac{-2\hbar-2x_{i,r}}{u+\frac{\hbar}{2}+x_{i,r}}+\frac{-2\hbar-2x_{i,r}}{u-\frac{\hbar}{2}-x_{i,r}}\big)\xi^+_{i,r}\xi^-_{i,r}\\
        &+\big(\frac{-2\hbar+2x_{i,r}}{u+\frac{\hbar}{2}-x_{i,r}}+\frac{-2\hbar+2x_{i,r}}{u-\frac{\hbar}{2}+x_{i,r}}\big)\xi^-_{i,r}\xi^+_{i,r}\bigg)-\frac{2\hbar \xi_i^2}{u}.
    \end{align*}
\end{lemma}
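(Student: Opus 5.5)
The plan is to apply Lemma~\ref{lem:partialfrac} to the rational function $uH_i(u)$ and then identify each residue with a product $\xi^+_{i,r}\xi^-_{i,r}$ or $\xi^-_{i,r}\xi^+_{i,r}$. These products are shift-free, since $d_{i,r}d_{i,r}^{-1}=1$, so they are honest rational functions in the $x$'s, $w$'s and $\hbar$. I treat the orthogonal and symplectic cases in parallel.

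\textbf{Step 1: the poles of $uH_i(u)$.}
The denominator $\bbV_i(u-\frac{\hbar}{2})\bbV_i(u+\frac{\hbar}{2})$ vanishes at $u=\pm(x_{i,r}+\frac{\hbar}{2})$ and $u=\pm(x_{i,r}-\frac{\hbar}{2})$ for $1\le r\le v_i$. For generic $x$ these poles are simple and distinct.
\begin{itemize}
\item If $\vv_i$ is odd, there are extra factors at $u=\pm\frac{\hbar}{2}$. In the orthogonal case these are cancelled by $4u^2-\hbar^2$ (Remark~\ref{rem:pole}).
\item In the symplectic case, $\vv_i$ is even, and the extra $u^2$ in the denominator gives $uH_i(u)$ a simple pole at $u=0$. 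This pole survives only when $\dim\Upsilon_i=0$; otherwise $u^{\dim\Upsilon_i}(-u)^{\dim\Upsilon_i}$ kills it.
\end{itemize}

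\textbf{Step 2: reduce to two residues per $r$.}
By Lemma~\ref{lem:partialfrac},
\[
(uH_i(u))^\circ=\sum_{\text{poles }a}\frac{\operatorname{Res}_{u=a}uH_i(u)}{u-a}.
\]
Since $H_i$ is even, $uH_i(u)$ is odd, and so $\operatorname{Res}_{-a}=\operatorname{Res}_{a}$. This explains why each bracket in the statement has the same numerator at $u=-\frac{\hbar}{2}-x_{i,r}$ and at $u=\frac{\hbar}{2}+x_{i,r}$, and likewise for the $\xi^-\xi^+$ pair. It therefore suffices to compute, for each $r$, the residues at $u=x_{i,r}+\frac{\hbar}{2}$ and at $u=-x_{i,r}+\frac{\hbar}{2}$.

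\textbf{Step 3: compute the products of $\xi$'s.}
Using $d_{i,r}f(x_{i,r})=f(x_{i,r}+\hbar)d_{i,r}$, we get
\[
\xi^+_{i,r}\xi^-_{i,r}=c^+_r(x)\,c^-_r(x)\big|_{x_{i,r}\mapsto x_{i,r}+\hbar},\qquad
\xi^-_{i,r}\xi^+_{i,r}=c^-_r(x)\,c^+_r(x)\big|_{x_{i,r}\mapsto x_{i,r}-\hbar},
\]
where $c^\pm_r$ is the coefficient of $d_{i,r}^{\pm1}$ in $\xi^\pm_{i,r}$. I expand these explicitly.
\begin{itemize}
\item The numerators combine into $W_i(x_{i,r}+\frac{\hbar}{2})W_i(-x_{i,r}-\frac{\hbar}{2})$ times the neighbour factors $\prod_{k-i}\bbV_k(x_{i,r}+\frac{\hbar}{2})$, respectively $\prod_{j-i}V_j(\pm(x_{i,r}+\frac{\hbar}{2}))$ together with the $\Upsilon$-factor. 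This is exactly the numerator of $H_i$ evaluated at $u=x_{i,r}+\frac{\hbar}{2}$.
\item The denominators give $\bbV_{i,r}(x_{i,r})\,\bbV_{i,r}(x_{i,r}+\hbar)$ times the explicit linear factors. These should match the derivative of $\bbV_i(u-\frac{\hbar}{2})\bbV_i(u+\frac{\hbar}{2})$ at that pole, up to the prefactor $-(2\hbar+2x_{i,r})/(2\hbar)$.
\end{itemize}
The residue at $u=0$ in the symplectic case with $\dim\Upsilon_i=0$ should give $\frac{W_i(0)^2\prod_{j-i}V_j(0)^2}{\bbV_i(\frac{\hbar}{2})^2}=\xi_i^2$, again using $\bbV_i$ even. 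This accounts for the term $-2\hbar\xi_i^2/u$.

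\textbf{Main obstacle.}
I expect the main obstacle to be the sign and normalization bookkeeping in Step 3. This includes:
\begin{itemize}
\item the factors $(-1)^{\vv_i+\sum_{k-i}v_k}$ in the orthogonal case;
\item the relation $\bbV_i(-z)=(-1)^{\overline{\vv_i}}\bbV_i(z)$;
\item the extra $2x_{i,r}$ versus $x_{i,r}\pm\frac{\hbar}{2}$ normalizations in the denominators of $\xi^\pm$, which must produce exactly the prefactor $-2\hbar\mp 2x_{i,r}$.
\end{itemize}
I will handle this by writing $\bbV_i(u\mp\frac{\hbar}{2})$ as $(u\mp\frac{\hbar}{2}-x_{i,r})(u\mp\frac{\hbar}{2}+x_{i,r})$ times the remaining factors. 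I then evaluate the remaining factors directly at the pole and match them against $\bbV_{i,r}(x_{i,r})\bbV_{i,r}(x_{i,r}\pm\hbar)/(2x_{i,r})^2$, checking the even and odd $\vv_i$ cases separately.
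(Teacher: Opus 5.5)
Your proposal is the paper's proof. You locate the simple poles of the odd function $uH_i(u)$, with the pole at $u=0$ present only for a symplectic vertex with $\dim\Upsilon_i=0$; you pair the residues at $\pm a$ by oddness; you identify them with the shift-free products $\xi^{\pm}_{i,r}\xi^{\mp}_{i,r}$ and $\xi_i^2$; and you conclude with Lemma~\ref{lem:partialfrac}.

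In the bookkeeping, the shifted denominator is $\bbV_{i,r}(x_{i,r})$ with $x_{i,r}\mapsto x_{i,r}+\hbar$ substituted everywhere. So the factor $z+x_{i,r}$ contributes $2(x_{i,r}+\hbar)$, not $2x_{i,r}+\hbar$, and the orthogonal normalization is $2x_{i,r}\cdot 2(x_{i,r}+\hbar)$, not $(2x_{i,r})^2$. The computation then gives $\operatorname{Res}_{u=x_{i,r}+\hbar/2}\,uH_i(u)=\frac{x_{i,r}+\hbar}{\hbar}\,\xi^+_{i,r}\xi^-_{i,r}$ in every case, with no extra minus sign.
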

\begin{proof}
    By the definition of $H_i(u)$,
    $uH_i(u)$ is an odd function. By Remark \ref{rem:pole}, $uH_i(u)$ has simple poles at $\{\pm(x_{i,r}+\frac{\hbar}{2}), \pm(x_{i,r}-\frac{\hbar}{2})\mid 1\leq r\leq v_i\}$ and also at $u=0$ if $\epsilon(i)=-$ and $\dim \Upsilon_i=0$. By a direct computation,
    \[\Res\big(-2\hbar uH_i(u), x_{i,r}+\frac{\hbar}{2}\big)=\Res\big(-2\hbar uH_i(u), -x_{i,r}-\frac{\hbar}{2}\big)=(-2\hbar-2x_{i,r})\xi_{i,r}^+\xi_{i,r}^-,\]
    and
    \[\Res\big(-2\hbar uH_i(u), x_{i,r}-\frac{\hbar}{2}\big)=\Res\big(-2\hbar uH_i(u), -x_{i,r}+\frac{\hbar}{2}\big)=(-2\hbar+2x_{i,r})\xi_{i,r}^-\xi_{i,r}^+.\]
    If $\epsilon(i)=-$,
    \[\Res\big(-2\hbar uH_i(u), 0\big)=-2\hbar\frac{\delta_{\dim \Upsilon_i,0}W_i(0)\prod_{j-i} V_j(0)W_i(0)\prod_{j-i} V_j(0)}{\bbV_i(-\frac{\hbar}{2})\bbV_i(\frac{\hbar}{2})}=-2\hbar \xi_i^2.\]
    Then the lemma follows from Lemma \ref{lem:partialfrac}.
\end{proof}

\subsection{Relation \eqref{eq:hbcomm(z)}}
If $c_{ij}=0$, then it is easy to see that $\left[H_i(u), B_j(v)\right]=0$. Thus, the relation holds in this case. Now let us assume $c_{ij}\neq 0$.

\subsubsection{Case $j=i$}\label{sec:HiBi1}
We need to show
\begin{align}\label{equ:HiBi}
    &\left(u^2-v^2\right)\left[H_i(u), B_i(v)\right]- 2\hbar v\left\{H_i(u), B_i(v)\right\} - \hbar^2\left[H_i(u), B_i(v)\right]\\
    &+\left[H_i(u), B_{i, 1}\right] +v\left[H_i(u), B_{i, 0}\right]+2 \hbar\left\{H_i(u), B_{i,0}\right\}=0.\notag
\end{align}
By Lemma \ref{lem:xih}(1), the coefficient of $H_i(u)\xi_{i,r}^+$ for $1\leq r\leq v_i$ on the left hand side of \eqref{equ:HiBi} is 
\begin{align*}
    &\frac{(u^2-v^2)-2\hbar v-\hbar^2}{v+\frac{\hbar}{2}+x_{i,r}}-\frac{(u^2-v^2)+2\hbar v-\hbar^2}{v+\frac{\hbar}{2}+x_{i,r}}\frac{(u-\frac{\hbar}{2}+x_{i,r})(u+\frac{\hbar}{2}-x_{i,r})}{(u-\frac{3\hbar}{2}-x_{i,r})(u+\frac{3\hbar}{2}+x_{i,r})}\\
    &-(\frac{\hbar}{2}+x_{i,r})\bigg(1-\frac{(u-\frac{\hbar}{2}+x_{i,r})(u+\frac{\hbar}{2}-x_{i,r})}{(u-\frac{3\hbar}{2}-x_{i,r})(u+\frac{3\hbar}{2}+x_{i,r})}\bigg)\\
    &+v+2\hbar-(v-2\hbar)\frac{(u-\frac{\hbar}{2}+x_{i,r})(u+\frac{\hbar}{2}-x_{i,r})}{(u-\frac{3\hbar}{2}-x_{i,r})(u+\frac{3\hbar}{2}+x_{i,r})}=0.
\end{align*}
Similarly, the coefficient of $H_i(u)\xi_{i,r}^-$ is also zero. If $\epsilon(i)=-$, it is direct to see that the coefficient of $\xi_iH_i(u)$ is also zero. Hence, \eqref{equ:HiBi} holds in this case. 

\subsubsection{Case $c_{ij}=-1$}
We need to show
\begin{align}\label{equ:HiBj1}
    &\left(u^2-v^2\right)\left[H_i(u), B_j(v)\right]+\hbar v\left\{H_i(u), B_j(v)\right\} -\frac{1}{4} \hbar^2\left[H_i(u), B_j(v)\right]\\
    &+\left[H_i(u), B_{j, 1}\right] + v\left[H_i(u), B_{j, 0}\right]-\hbar\left\{H_i(u), B_{j, 0}\right\}=0.\notag
\end{align}

By Lemma \ref{lem:xih}(2), the coefficient of $H_i(u)\xi_{j,r}^+$ for $1\leq r\leq v_j$ on the left hand side of \eqref{equ:HiBj1} is
\begin{align*}
    &\frac{(u^2-v^2)+\hbar v-\frac{1}{4}\hbar^2}{v+\frac{\hbar}{2}+x_{j,r}}-\frac{(u^2-v^2)-\hbar v-\frac{1}{4}\hbar^2}{v+\frac{\hbar}{2}+x_{j,r}}\frac{(u-x_{j,r}-\hbar)(u+x_{j,r}+\hbar)}{(u-x_{j,r})(u+x_{j,r})}\\
    &-(\frac{\hbar}{2}+x_{j,r})\bigg(1-\frac{(u-x_{j,r}-\hbar)(u+x_{j,r}+\hbar)}{(u-x_{j,r})(u+x_{j,r})}\bigg)\\
    &+v-\hbar-(v+\hbar)\frac{(u-x_{j,r}-\hbar)(u+x_{j,r}+\hbar)}{(u-x_{j,r})(u+x_{j,r})}=0.
\end{align*}
Similarly, the coefficient of $H_i(u)\xi_{j,r}^-$ is also zero by Lemma \ref{lem:xih}(2). If $\epsilon(j)=-$, the coefficient of $\xi_jH_i(u)$ is obviously $0$. This concludes the verification of Relation \eqref{eq:hbcomm(z)}.

\subsection{Relation \eqref{eq:bcomm(z)}}

\subsubsection{Case $j\neq i$}
If $c_{ij}=0$, then it is obvious that $\left[B_i(u), B_j(v)\right]=0$. Hence, the relation holds in this case. Now let us assume $c_{ij}=-1$, i.e. $i$ and $j$ are connected by an edge. Without loss of generality, let us assume $\epsilon(i)=-$.
We need to show 
\begin{equation}\label{equ:BBii1}
    (u-v)\left[B_i(u), B_j(v)\right]+\frac{1}{2} \hbar\left\{B_i(u), B_j(v)\right\}-\big(\left[B_{i, 0}, B_j(v)\right]-\left[B_i(u), B_{j, 0}\right]\big)=0.
\end{equation}
By definition,
\begin{align*}
    B_i(u)B_j(v)=&\big(\sum_{r=1}^{v_i}\frac{1}{u+\frac{\hbar}{2}+x_{i,r}}\xi_{i,r}^+
        +\sum_{r=1}^{v_i}\frac{1}{u+\frac{\hbar}{2}-x_{i,r}}\xi_{i,r}^-+
        \frac{\xi_i}{u}\big)\\
        &\big(\sum_{s=1}^{v_j} \frac{1}{v+\frac{\hbar}{2}+x_{j,s}}\xi^+_{j,s}+\sum_{s=1}^{v_j} \frac{1}{v+\frac{\hbar}{2}-x_{j,s}}\xi^-_{j,s}\big).
\end{align*}
Therefore, by Lemma \ref{lem:xipm}, the coefficient of $\xi_{i,r}^+\xi_{j,s}^+$ in the left hand side of \eqref{equ:BBii1} is
\begin{align*}
    &(u-v+\frac{\hbar}{2})\frac{1}{u+\frac{\hbar}{2}+x_{i,r}}\frac{1}{v+\frac{\hbar}{2}+x_{j,s}}-\frac{1}{v+\frac{\hbar}{2}+x_{j,s}}+\frac{1}{u+\frac{\hbar}{2}+x_{i,r}}\\
    &-\bigg((u-v-\frac{\hbar}{2})\frac{1}{u+\frac{\hbar}{2}+x_{i,r}}\frac{1}{v+\frac{\hbar}{2}+x_{j,s}}-\frac{1}{v+\frac{\hbar}{2}+x_{j,s}}+\frac{1}{u+\frac{\hbar}{2}+x_{i,r}}\bigg)\frac{x_{i,r}-\frac{\hbar}{2}-x_{j,s}}{x_{i,r}+\frac{\hbar}{2}-x_{j,s}}=0.
\end{align*}
A similar calculation shows that the other coefficients are 0.

\subsubsection{Case $j=i$ and $\epsilon(i)=+$}\label{sec:j=iorth}
We need to show
\begin{align}\label{equ:BB}
    &(u-v)\left[B_i(u), B_i(v)\right]- \hbar\left\{B_i(u), B_i(v)\right\}-\left(\left[B_{i, 0}, B_i(v)\right]-\left[B_i(u), B_{i, 0}\right]\right) \\
    =&-\hbar\left(\frac{2 u}{u+v} H_i^{\circ}(u)+\frac{2 v}{u+v} H_i^{\circ}(v)\right).\notag
\end{align}

From the definition,
\begin{align*}
    &B_i(u)B_i(v)\\
    =&\sum_{1\leq r\neq s\leq v_i}\bigg(\frac{1}{u+\frac{\hbar}{2}+x_{i,r}}\frac{1}{v+\frac{\hbar}{2}+x_{i,s}}\xi^+_{i,r}\xi^+_{i,s}+\frac{1}{u+\frac{\hbar}{2}+x_{i,r}}\frac{1}{v+\frac{\hbar}{2}-x_{i,s}}\xi^+_{i,r}\xi^-_{i,s}\\
    &+\frac{1}{u+\frac{\hbar}{2}-x_{i,r}}\frac{1}{v+\frac{\hbar}{2}+x_{i,s}}\xi^-_{i,r}\xi^+_{i,s}+\frac{1}{u+\frac{\hbar}{2}-x_{i,r}}\frac{1}{v+\frac{\hbar}{2}-x_{i,s}}\xi^-_{i,r}\xi^-_{i,s}\bigg)\\
    +&\sum_{1\leq r\leq v_i}\bigg(\frac{1}{u+\frac{\hbar}{2}+x_{i,r}}\frac{1}{v+\frac{3\hbar}{2}+x_{i,r}}\xi^+_{i,r}\xi^+_{i,r}+\frac{1}{u+\frac{\hbar}{2}-x_{i,r}}\frac{1}{v+\frac{3\hbar}{2}-x_{i,r}}\xi^-_{i,r}\xi^-_{i,r}\\
    &+\frac{1}{u+\frac{\hbar}{2}+x_{i,r}}\frac{1}{v-\frac{\hbar}{2}-x_{i,r}}\xi^+_{i,r}\xi^-_{i,r}+\frac{1}{u+\frac{\hbar}{2}-x_{i,r}}\frac{1}{v-\frac{\hbar}{2}+x_{i,r}}\xi^-_{i,r}\xi^+_{i,r}\bigg).
\end{align*}
Therefore, by Lemma \ref{lem:xii}, the coefficient of $\xi^+_{i,r}\xi^+_{i,s}$ for $1\leq r\neq s\leq v_i$ in the left hand side of \eqref{equ:BB} is
\begin{align*}
    &(u-v-\hbar)\frac{1}{u+\frac{\hbar}{2}+x_{i,r}}\frac{1}{v+\frac{\hbar}{2}+x_{i,s}}-\frac{1}{v+\frac{\hbar}{2}+x_{i,s}}+\frac{1}{u+\frac{\hbar}{2}+x_{i,r}}\\ 
    &+\bigg(-(u-v+\hbar)\frac{1}{u+\frac{\hbar}{2}+x_{i,r}}\frac{1}{v+\frac{\hbar}{2}+x_{i,s}}-\frac{1}{u+\frac{\hbar}{2}+x_{i,r}}+\frac{1}{v+\frac{\hbar}{2}+x_{i,s}}\bigg)\frac{x_{i,r}-x_{i,s}+\hbar}{x_{i,r}-x_{i,s}-\hbar}=0.
\end{align*}
Similarly, the coefficients of $\xi_{i,r}^+\xi_{i,s}^-$, $\xi_{i,s}^-\xi_{i,r}^+$, and  
$\xi^-_{i,r}\xi^-_{i,s}$ for $1\leq r\neq s\leq v_i$ in the left hand side of \eqref{equ:BB} are all 0.

The coefficient of $\xi_{i,r}^+\xi_{i,r}^+$ is 
\begin{align*}
    &(u-v-\hbar)\frac{1}{u+\frac{\hbar}{2}+x_{i,r}}\frac{1}{v+\frac{3\hbar}{2}+x_{i,r}}-\frac{1}{v+\frac{3\hbar}{2}+x_{i,r}}+\frac{1}{u+\frac{\hbar}{2}+x_{i,r}}\\ 
    &-(u-v+\hbar)\frac{1}{v+\frac{\hbar}{2}+x_{i,r}}\frac{1}{u+\frac{3\hbar}{2}+x_{i,r}}-\frac{1}{u+\frac{3\hbar}{2}+x_{i,r}}+\frac{1}{v+\frac{\hbar}{2}+x_{i,r}}=0.
\end{align*}
So is the coefficient of $\xi_{i,r}^-\xi_{i,r}^-$.

Therefore, the left hand side of \eqref{equ:BB} is
\begin{align*}
    &\sum_{1\leq r\leq v_i}\bigg((u-v-\hbar)\big(\frac{1}{u+\frac{\hbar}{2}+x_{i,r}}\frac{1}{v-\frac{\hbar}{2}-x_{i,r}}\xi^+_{i,r}\xi^-_{i,r}+\frac{1}{u+\frac{\hbar}{2}-x_{i,r}}\frac{1}{v-\frac{\hbar}{2}+x_{i,r}}\xi^-_{i,r}\xi^+_{i,r}\big)\\
    &-\big(\frac{1}{v-\frac{\hbar}{2}-x_{i,r}}\xi^+_{i,r}\xi^-_{i,r}+\frac{1}{v-\frac{\hbar}{2}+x_{i,r}}\xi^-_{i,r}\xi^+_{i,r}\big)+\big(\frac{1}{u+\frac{\hbar}{2}+x_{i,r}}\xi^+_{i,r}\xi^-_{i,r}+\frac{1}{u+\frac{\hbar}{2}-x_{i,r}}\xi^-_{i,r}\xi^+_{i,r}\big)\\
    &-(u-v+\hbar)\big(\frac{1}{v+\frac{\hbar}{2}+x_{i,r}}\frac{1}{u-\frac{\hbar}{2}-x_{i,r}}\xi^+_{i,r}\xi^-_{i,r}+\frac{1}{v+\frac{\hbar}{2}-x_{i,r}}\frac{1}{u-\frac{\hbar}{2}+x_{i,r}}\xi^-_{i,r}\xi^+_{i,r}\big)\\
    &+\big(\frac{1}{v+\frac{\hbar}{2}+x_{i,r}}\xi^+_{i,r}\xi^-_{i,r}+\frac{1}{v+\frac{\hbar}{2}-x_{i,r}}\xi^-_{i,r}\xi^+_{i,r}\big)-\big(\frac{1}{u-\frac{\hbar}{2}-x_{i,r}}\xi^+_{i,r}\xi^-_{i,r}+\frac{1}{u-\frac{\hbar}{2}+x_{i,r}}\xi^-_{i,r}\xi^+_{i,r}\big)\bigg)\\
    =&\frac{1}{u+v}\sum_{1\leq r\leq v_i}\bigg(\big(\frac{-2\hbar-2x_{i,r}}{u+\frac{\hbar}{2}+x_{i,r}}+\frac{-2\hbar-2x_{i,r}}{u-\frac{\hbar}{2}-x_{i,r}}\big)\xi^+_{i,r}\xi^-_{i,r}+\big(\frac{-2\hbar+2x_{i,r}}{u+\frac{\hbar}{2}-x_{i,r}}+\frac{-2\hbar+2x_{i,r}}{u-\frac{\hbar}{2}+x_{i,r}}\big)\xi^-_{i,r}\xi^+_{i,r}\\
    &+\big(\frac{-2\hbar-2x_{i,r}}{v-\frac{\hbar}{2}-x_{i,r}}+\frac{-2\hbar-2x_{i,r}}{v+\frac{\hbar}{2}+x_{i,r}}\big)\xi^+_{i,r}\xi^-_{i,r}+\big(\frac{-2\hbar+2x_{i,r}}{v-\frac{\hbar}{2}+x_{i,r}}+\frac{-2\hbar+2x_{i,r}}{v+\frac{\hbar}{2}-x_{i,r}}\big)\xi^-_{i,r}\xi^+_{i,r}\bigg)\\
    =&\frac{-2\hbar}{u+v}\bigg((uH_i(u))^\circ-(-vH_i(-v))^\circ\bigg)\\
    =&\frac{-2\hbar}{u+v}\big(u(H_i(u))^\circ+v(H_i(v))^\circ\big).
\end{align*}
Here the second equality follows from Lemma \ref{lem:H_i}, while the last one follows from \eqref{equ:divz}. This finishes the proof of \eqref{equ:BB}. Finally, the last case $j=i$ and $\epsilon(i)=-$ can be proved similarly.

\subsection{Serre relation}
We will prove the equivalent Serre relation in Lemma \ref{lem:serre}: if $c_{ij}=-1$, 
\begin{align}\label{equ:serre}
&[B_i(u),[B_{i,0},B_{j,0}]]
+
[B_{i,0},[B_i(u),B_{j,0}]]\\
=&\big(\hbar^2 B_j(-u+\hbar/2)H_i(u)-\hbar^2 B_j(u+\hbar/2)H_i(u)\big)^\circ.\notag
\end{align}

We will frequently use Lemma \ref{lem:xii} and Lemma \ref{lem:xipm} in the computation. 
The left hand side of \eqref{equ:serre} is 
\begin{align}\label{equ:S}
    &\bigg[\sum_{r=0}^{v_i} \frac{1}{u+\frac{\hbar}{2}+x_{i,r}}\xi^+_{i,r}+\sum_{r=0}^{v_i} \frac{1}{u+\frac{\hbar}{2}-x_{i,r}}\xi^-_{i,r},\big[\sum_{s=0}^{v_i}(\xi_{i,s}^++\xi_{i,s}^-),\sum_{t=0}^{v_j}(\xi_{j,t}^++\xi_{j,t}^-)\big]\bigg]\\
    &+\bigg[\sum_{s=0}^{v_i}(\xi_{i,s}^++\xi_{i,s}^-),\big[\sum_{r=0}^{v_i} \frac{1}{u+\frac{\hbar}{2}+x_{i,r}}\xi^+_{i,r}+\sum_{r=0}^{v_i} \frac{1}{u+\frac{\hbar}{2}-x_{i,r}}\xi^-_{i,r},\sum_{t=0}^{v_j}(\xi_{j,t}^++\xi_{j,t}^-)\big]\bigg].\notag
\end{align}
Let us first consider the case of $r\neq s$. Then
\begin{align*}
    &\big[ \xi^+_{i,r},\big[\xi_{i,s}^+,\xi_{j,t}^+\big]\big]+\big[\xi_{i,s}^+,\big[\xi^+_{i,r},\xi_{j,t}^+\big]\big]\\
    =&-\frac{\hbar}{x_{j,t}-\frac{\hbar}{2}-x_{i,s}}\big(\xi_{i,r}^+\xi_{i,s}^+\xi_{j,t}^+-\xi_{i,s}^+\xi_{j,t}^+\xi_{i,r}^+\big)-\frac{\hbar}{x_{j,t}-\frac{\hbar}{2}-x_{i,r}}\big(\xi_{i,s}^+\xi^+_{i,r}\xi_{j,t}^+-\xi^+_{i,r}\xi_{j,t}^+\xi_{i,s}^+)\\
    =&-\bigg(\frac{\hbar}{x_{j,t}-\frac{\hbar}{2}-x_{i,s}}\bigg(1-\frac{(x_{i,r}-x_{i,s}+\hbar)(x_{j,t}-x_{i,r}+\frac{\hbar}{2})}{(x_{i,r}-x_{i,s}-\hbar)(x_{j,t}-x_{i,r}-\frac{\hbar}{2})}\bigg)\\
    &+\frac{\hbar}{x_{j,t}-\frac{\hbar}{2}-x_{i,r}}\big(\frac{x_{i,r}-x_{i,s}+\hbar}{x_{i,r}-x_{i,s}-\hbar}-\frac{x_{j,t}-x_{i,s}+\frac{\hbar}{2}}{x_{j,t}-x_{i,s}-\frac{\hbar}{2}}\big)\bigg)\xi_{i,r}^+\xi_{i,s}^+\xi_{j,t}^+=0.
\end{align*}
Similarly, for $A\in \{\xi_{i,r}^\pm\}$, $B\in \{\xi_{i,s}^\pm\}$, $C\in \{\xi_{j,t}^\pm\}$ such that $r\neq s$, $$\big[ A,\big[B,C\big]\big]+\big[B,\big[A,C\big]\big]=0,$$ as we just need to change the corresponding variables to its negative.

For the case $1\leq r=s\leq v_i$, we have
\begin{align*}
    &\big[\frac{1}{u+\frac{\hbar}{2}+x_{i,r}}\xi^+_{i,r},\big[\xi_{i,r}^+,\xi_{j,t}^+\big]\big]+\big[\xi_{i,r}^+,\big[\frac{1}{u+\frac{\hbar}{2}+x_{i,r}}\xi^+_{i,r},\xi_{j,t}^+\big]\big]\\
    =&-\frac{1}{u+\frac{\hbar}{2}+x_{i,r}}\frac{\hbar}{x_{j,t}-\frac{3\hbar}{2}-x_{i,r}}\xi^+_{i,r}\xi_{i,r}^+\xi_{j,t}^++\frac{1}{u+\frac{3\hbar}{2}+x_{i,r}}\frac{\hbar}{x_{j,t}-\frac{\hbar}{2}-x_{i,r}}\xi_{i,r}^+\xi_{j,t}^+\xi^+_{i,r}\\
    &-\frac{1}{u+\frac{3\hbar}{2}+x_{i,r}}\frac{\hbar}{x_{j,t}-\frac{3\hbar}{2}-x_{i,r}}\xi_{i,r}^+\xi^+_{i,r}\xi_{j,t}^++\frac{1}{u+\frac{\hbar}{2}+x_{i,r}}\frac{\hbar}{x_{j,t}-\frac{\hbar}{2}-x_{i,r}}\xi^+_{i,r}\xi_{j,t}^+\xi_{i,r}^+\\
    =&\bigg(-\frac{1}{u+\frac{\hbar}{2}+x_{i,r}}\frac{\hbar}{x_{j,t}-\frac{3\hbar}{2}-x_{i,r}}+\frac{1}{u+\frac{3\hbar}{2}+x_{i,r}}\frac{\hbar}{x_{j,t}-\frac{\hbar}{2}-x_{i,r}}\frac{x_{j,t}-x_{i,r}-\frac{\hbar}{2}}{x_{j,t}-x_{i,r}-\frac{3\hbar}{2}}\\
    &-\frac{1}{u+\frac{3\hbar}{2}+x_{i,r}}\frac{\hbar}{x_{j,t}-\frac{3\hbar}{2}-x_{i,r}}+\frac{1}{u+\frac{\hbar}{2}+x_{i,r}}\frac{\hbar}{x_{j,t}-\frac{\hbar}{2}-x_{i,r}}\frac{x_{j,t}-x_{i,r}-\frac{\hbar}{2}}{x_{j,t}-x_{i,r}-\frac{3\hbar}{2}}\bigg)\xi^+_{i,r}\xi_{i,r}^+\xi_{j,t}^+\\
    =&0.
\end{align*}
Similarly, for $C\in \{\xi_{j,t}^\pm\}$,
\[\big[\frac{1}{u+\frac{\hbar}{2}\pm x_{i,r}}\xi^\pm_{i,r},\big[\xi_{i,r}^\pm,C\big]\big]+\big[\xi_{i,r}^\pm,\big[\frac{1}{u+\frac{\hbar}{2}\pm x_{i,r}}\xi^\pm_{i,r},C\big]\big]=0.\]

Therefore, the remaining terms in \eqref{equ:S} are
\begin{align*}
    &\sum_{r=1}^{v_i}\sum_{t=0}^{v_j}\bigg(\big[\frac{1}{u+\frac{\hbar}{2}+x_{i,r}}\xi^+_{i,r},\big[\xi_{i,r}^-,\xi_{j,t}^++\xi_{j,t}^-\big]\big]+\big[\frac{1}{u+\frac{\hbar}{2}-x_{i,r}}\xi^-_{i,r},\big[\xi_{i,r}^+,\xi_{j,t}^++\xi_{j,t}^-\big]\big]\\
    &+\big[\xi_{i,r}^+,\big[\frac{1}{u+\frac{\hbar}{2}-x_{i,r}}\xi^-_{i,r},\xi_{j,t}^++\xi_{j,t}^-\big]\big]+\big[\xi_{i,r}^-,\big[\frac{1}{u+\frac{\hbar}{2}+x_{i,r}}\xi^+_{i,r},\xi_{j,t}^++\xi_{j,t}^-\big]\big]\bigg)\\
    &+\sum_{t=0}^{v_j}\bigg(\big[\frac{1}{u}\xi_i,\big[\xi_i,\xi_{j,t}^++\xi_{j,t}^-\big]\big]+\big[\xi_i,\big[\frac{1}{u}\xi_i,\xi_{j,t}^++\xi_{j,t}^-\big]\big]\bigg)
\end{align*}
The terms involving $\xi_{j,t}^+$ is
\begin{align*}
    &\sum_{r=1}^{v_i}\bigg(\big[\frac{1}{u+\frac{\hbar}{2}+x_{i,r}}\xi^+_{i,r},\big[\xi_{i,r}^-,\xi_{j,t}^+\big]\big]+\big[\frac{1}{u+\frac{\hbar}{2}-x_{i,r}}\xi^-_{i,r},\big[\xi_{i,r}^+,\xi_{j,t}^+\big]\big]\\
    &+\big[\xi_{i,r}^+,\big[\frac{1}{u+\frac{\hbar}{2}-x_{i,r}}\xi^-_{i,r},\xi_{j,t}^+\big]\big]+\big[\xi_{i,r}^-,\big[ \frac{1}{u+\frac{\hbar}{2}+x_{i,r}}\xi^+_{i,r},\xi_{j,t}^+\big]\big]\bigg)\\
    &+\big[\frac{1}{u}\xi_i,\big[\xi_i,\xi_{j,t}^+\big]\big]+\big[\xi_i,\big[\frac{1}{u}\xi_i,\xi_{j,t}^+\big]\big]\\
    =&\sum_{r=1}^{v_i}\bigg(\big(\frac{1}{u+\frac{\hbar}{2}+x_{i,r}}+\frac{1}{u-\frac{\hbar}{2}-x_{i,r}}\big)(-2\xi^+_{i,r}\xi_{j,t}^+\xi_{i,r}^-+\xi^+_{i,r}\xi_{i,r}^-\xi_{j,t}^++\xi_{j,t}^+\xi^+_{i,r}\xi_{i,r}^-)\\
    &+\big(\frac{1}{u-\frac{\hbar}{2}+x_{i,r}}+\frac{1}{u+\frac{\hbar}{2}-x_{i,r}}\big)(-2\xi_{i,r}^-\xi_{j,t}^+\xi^+_{i,r}+\xi_{j,t}^+\xi_{i,r}^-\xi^+_{i,r}+\xi_{i,r}^-\xi^+_{i,r}\xi_{j,t}^+)\bigg)\\
    &+\frac{2\hbar^2}{ux_{j,t}^2}\xi_i^2\xi_{j,t}^+\\
    =&\Bigg(\sum_{r=1}^{v_i}\bigg(\frac{\hbar(-2\hbar-2x_{i,r})}{(x_{i,r}+\frac{\hbar}{2})^2-x_{j,t}^2}\big(\frac{1}{u+\frac{\hbar}{2}+x_{i,r}}+\frac{1}{u-\frac{\hbar}{2}-x_{i,r}}\big)\xi^+_{i,r}\xi^-_{i,r}\\
        &+\frac{\hbar(-2\hbar+2x_{i,r})}{(x_{i,r}-\frac{\hbar}{2})^2-x_{j,t}^2}\big(\frac{1}{u+\frac{\hbar}{2}-x_{i,r}}+\frac{1}{u-\frac{\hbar}{2}+x_{i,r}}\big)\xi^-_{i,r}\xi^+_{i,r}\bigg)+\frac{2\hbar^2}{ux_{j,t}^2}\xi_i^2\Bigg)\xi_{j,t}^+.
\end{align*}

Now let us first assume $1\leq t\leq v_j$, then the terms on the right hand side of \eqref{equ:serre} involving $\xi_{j,t}^+$ are,
\begin{align*}
    &-\frac{\hbar^2}{u-\hbar-x_{j,t}}\xi^+_{j,t}H_i(u)-\frac{\hbar^2}{u+\hbar+x_{j,t}}\xi^+_{j,t}H_i(u)\\
    =&\frac{-2\hbar^2u}{(u-x_{j,t})(u+x_{j,t})}H_i(u)\xi^+_{j,t}.
\end{align*}
Here we used Lemma \ref{lem:xih}.
By the definition of $H_i(u)$, the above function does not have poles at $\{\pm x_{j,t}\}$. Hence, by the same calculation as in the proof of Lemma \ref{lem:H_i}, we get
\begin{align*}
        &\bigg(-\frac{2\hbar^2u}{(u-x_{j,t})(u+x_{j,t})}H_i(u)\bigg)^\circ\\
        =&\sum_{r=1}^{v_i}\bigg(\frac{\hbar(-2\hbar-2x_{i,r})}{(x_{i,r}+\frac{\hbar}{2})^2-x_{j,t}^2}\big(\frac{1}{u+\frac{\hbar}{2}+x_{i,r}}+\frac{1}{u-\frac{\hbar}{2}-x_{i,r}}\big)\xi^+_{i,r}\xi^-_{i,r}\\
        &+\frac{\hbar(-2\hbar+2x_{i,r})}{(x_{i,r}-\frac{\hbar}{2})^2-x_{j,t}^2}\big(\frac{1}{u+\frac{\hbar}{2}-x_{i,r}}+\frac{1}{u-\frac{\hbar}{2}+x_{i,r}}\big)\xi^-_{i,r}\xi^+_{i,r}\bigg)+\frac{2\hbar^2}{ux_{j,t}^2}\xi_i^2.
\end{align*}
Hence, the coefficient of $\xi_{j,t}^+$ on both sides of \eqref{equ:serre} matches. By changing the variable $x_{j,t}$ to $-x_{j,t}$, the coefficient of $\xi_{j,t}^-$ will also match. 

Now let us consider the case of $t=0$. Then $\epsilon(j)=-$, and the coefficient of $\xi_j$ on the right hand side is 
\[\bigg(\hbar^2\frac{\xi_j}{-u+\frac{\hbar}{2}}-\hbar^2\frac{\xi_j}{u+\frac{\hbar}{2}}\bigg)H_i(u)=-\frac{2\hbar^2u}{u^2-\frac{\hbar^2}{4}}\xi_jH_i(u).\]
By letting $x_{j,t}=\frac{\hbar}{2}$ in the above calculation, we get that the coefficients of $\xi_j$ also match. This finishes the proof of the Serre relation. Hence, this concludes the proof of Theorem \ref{thm:gklo}.

\bibliographystyle{alpha}
\bibliography{Nonco.bib}

\end{document}